\numberwithin{equation}{section}
\newtheorem{prop}{Proposition}[section]
\newtheorem{lemma}{Lemma}[section]
\newtheorem{thm}{Theorem}[section]
\newcommand{\Es}{\ensuremath{\mathbb{E}}}
\newcommand{\MM}{\ensuremath{\mathcal{M}}}
\newcommand{\EN}{\ensuremath{\mathbb{N}}}
\newcommand{\ER}{\ensuremath{\mathbb{R}}}
\newcommand{\PP}{\ensuremath{\mathbb{P}}}
\newcommand{\z}{\ensuremath{\mathbf{z}}}
\newcommand{\zz}{\ensuremath{\boldsymbol{\zeta}}}
\newcommand{\be}{\begin{enumerate}} 
\newcommand{\ee}{\end{enumerate}}
\newcommand{\bi}{\begin{itemize}}
\newcommand{\ei}{\end{itemize}}
\newcommand{\dr}{\partial}
\newcommand{\dbn}[1]{\left\lVert #1 \right\rVert}
\newcommand{\vep}{\varepsilon}
\begin{document}

\title[From stochastic Zakharov to multiplicative NLS]{From stochastic Zakharov system to multiplicative stochastic nonlinear Schr\"odinger equation}

\author[Gr\'egoire Barru\'e]{Gr\'egoire Barru\'e$^{\scriptsize 1}$}
\author[Anne de Bouard]{Anne de Bouard$^{\scriptsize 2}$}
\author[Arnaud Debussche]{Arnaud Debussche$^{\scriptsize 3}$}

\keywords{}

\subjclass{
35Q55, 60H15, 60J60
}

\maketitle

\begin{center} \small
$^1$ Univ Rennes, IRMAR,UMR 6625,  
F35000, France; \\
\email{gregoire.barrue@orange.com}
\end{center}

\begin{center} \small
$^2$ CMAP, CNRS, Ecole polytechnique, Institut Polytechnique de Paris\\
91128 Palaiseau, France; \\
\email{anne.debouard@polytechnique.edu}
\end{center}

\begin{center} \small
$^3$ Univ Rennes, IRMAR,UMR 6625,  
F35000, France; \\
\email{arnaud.debussche@ens-rennes.fr}
\end{center}

\vskip 0.1 in
\noindent
{\small 
{\bf Abstract}. 
We study the convergence of a Zakharov system driven by a time white noise, colored in space, to a multiplicative stochastic
nonlinear Schr\"odinger equation, as the ion-sound speed tends to infinity. In the absence of noise, the conservation of
energy gives bounds on the solutions, but this evolution becomes singular in the presence of the noise. To overcome this
difficulty, we show that the problem may be recasted in the diffusion-approximation framework, and make use of the perturbed
test-function method. We also obtain convergence in probability. The result is limited to dimension one, to avoid too much
technicalities. As a prerequisite, we prove the existence and uniqueness of regular solutions of the stochastic Zakharov system.

}
\vskip 0.1 in

\tableofcontents
%\newpage
\section{Introduction}
 
The Zakharov system was introduced in \cite{Zakharov1972CollapseOL} as a simplified model for Langmuir turbulence, a phenomenon resulting 
from the nonlinear coupling between Langmuir waves and perturbations of the ion density in a partially ionized plasma. The system couples a
Schr\"odinger equation for the slowly varying complex envelope $u$ of the electric field with a wave equation for the deviation $n$ of the ion density 
from its average. Related systems of equations have also been derived from the coupled Euler equation for the electrons and ions, and Maxwell
equations for the electric field (see e.g. \cite{berge1996perturbative, Sulem, texier2005wkb}).

After normalization of the quantities $u$ and $n$ with respect to physical constants (see e.g. \cite{masmoudi2005klein}), the system may be written as
\begin{equation}
\label{DZK}
\left\{\begin{array}{lll}
				i\dr_tu=-\Delta u+nu, \\[0.2cm]
				\varepsilon^2\dr_t^2n=\Delta(n+\vert u\vert^2) ,
\end{array}
\right.  %\text{ with } \left(u(0),n(0),\dr_tn(0)\right)=\left(u_0,n_0,n_1\right).
\end{equation}
where the remaining coefficient $\varepsilon$ is proportional to the inverse of the ion sound velocity.

A large number of mathematical studies have been devoted to the Cauchy problem for the system \eqref{DZK}, as well as its limit as
$\varepsilon$ goes to zero: note indeed that the system formally converges to the cubic nonlinear Schr\"odinger equation
\begin{equation}\label{NLS}
i\dr_tu +\Delta u+\vert u\vert^2u=0
\end{equation} 
in this limit.

The first mathematical results for the system \eqref{DZK} were obtained by Sulem and Sulem in \cite{SuSu79} where the authors proved global existence of weak solutions in the energy space in dimension $d=1,2,3$ using a Galerkin method, and global well-posedness in dimension one for 
more regular initial data. 
Added and Added in \cite{AdAd84}  improved the results of \cite{SuSu79} by showing global well-posedness in dimension 2 for small initial data,  using Brezis-Gallou\'et inequality.  A local well-posedness result was proved by Schochet and Weinstein in \cite{SchoWein86} with an existence time interval independent of $\varepsilon$, which allowed them to obtain the convergence as $\varepsilon$ goes to 0 to a solution of  \eqref{NLS}.  In \cite{AdAd88} Added and Added investigated the rate of convergence for small amplitude solutions, and highlighted the presence of initial layer effects
in the absence of a compatibility condition on the initial data. The optimal rates for this convergence were obtained in \cite{TsuOz91}. This limit was also studied for instance in \cite{MasNak08} where the authors were interested in the convergence of the Klein-Gordon-Zakharov system to the nonlinear Schr\"odinger equation.

Other local well-posedness results for regular data, as well as propagation of regularity may be found in \cite{TsuOz92}, 
and  local existence in the energy space was obtained in \cite{Collianderthese}. 
Bourgain and Colliander also used a low-high frequency decomposition method in \cite{CoBo96}, and the method was also used in \cite{GiTsuVe97} to get some refinements for local well-posedness in general space dimension.  More results about local and global well-posedness may be found for instance in \cite{Co98,CoHoTzi08,MasNak09,Pech01} for different dimensions or different nonlinearities.

Here, we are interested in a stochastic version of  the Zakharov system, in the presence of an additional damping term.  The system may be written as:
\begin{equation}\label{e0.1}
\left\{\begin{array}{lll}
				i\dr_tu=-\Delta u+nu ,\\[0.2cm]
				\varepsilon^2d(\dr_tn) +\alpha \varepsilon \dr_tn =\Delta(n+\vert u\vert^2)dt +\phi dW,
\end{array}
\right.
\end{equation}
where $\alpha>0$. This system is complemented with inital data, 
%$(u(0), n(0), \dr_tn(0))=(u_0,n_0,n_1)$ 
which belong to some Sobolev spaces that we will specify in the different sections of the paper.
The operator $\phi$ is a smoothing operator on $L^2(\ER^d)$, and the random process $W_t$ is a cylindrical Wiener process on $L^2(\ER^d)$,
so that $\phi W_t$ is a $\phi \phi^*$-Wiener process.  This stochastic perturbation corresponds to a spatially correlated white noise in time, 
and can be physically understood as random external fluctuations in the system under consideration, as for example thermal fluctuations.  
The damping is introduced to compensate the input of energy due to the noise. For a smaller damping, {\it i.e.} with $\varepsilon$ replaced by $\varepsilon^\gamma$, $\gamma>1$, we believe that the limit is ill defined. A larger damping could be treated with similar arguments as in our work. Note that numerical simulations indicate that indeed a smaller damping does not allow to take the limit as $\varepsilon$ tends to $0$ (see \cite{barrue2022approximation}).
The Zakharov system with damping terms and stochastic forcing is used for example in  \cite{GuFo06}, where the authors provide a numerical study of Langmuir turbulence on incoherent scatter spectra.

As in the deterministic case, we expect that the solution of \eqref{e0.1} converges in some sense to a stochastic Nonlinear Schr\"odinger equation,
as $\varepsilon$ tends to zero. The conservative version of this later equation with a multiplicative noise has been studied by Debussche and de Bouard in \cite{DbDB99} and \cite{DBDb03}, while Barbu,  R\"ockner and Zhang in \cite{BRZ14, zbMATH06568221, zbMATH06782935} used a different approach based on rescaling transformations to prove well-posedness results in both conservative and nonconservative cases. Brzezniak and Millet \cite{BrMi14} studied the stochastic Nonlinear Schr\"odinger equation  on a two-dimensional compact Riemannian manifold, with the use of stochastic Strichartz
estimates (see also \cite{zbMATH06968619, zbMATH07081468,zbMATH07259282}). The defocusing mass and energy critical cases
have been studied by Zhang \cite{zhang2023stochastic} and Fan, Xu and Zhao \cite{zbMATH07754960}.

As for the stochastic Zakharov system, Tsutsumi proved in \cite{Tsu2022} the global existence of $L^2$-solutions for a system of the form \eqref{e0.1},
but with an additional additive noise in the equation for $u$, in space dimension $d=1$. More recently, a more general system with a multiplicative noise
in the equation for $u$ was studied by Herr, R\"ockner, Spitz and Zhang, for $d=3$, in \cite{herr2023three}, using the rescaling approach.

The aim of the present paper is to prove the convergence in probability, as $\varepsilon$ goes to zero, of the solution of \eqref{e0.1}, in dimension
$d=1$, to the solution of the stochastic nonlinear Schr\"odinger equation:
$$idu_t=(-\dr_x^2u_t-\vert u_t\vert^2u_t-\frac{i}{2}u_tF)dt-u_t(\dr_x^2)^{-1}\phi dW_t,$$ 
where the last two terms in the equation correspond to the Stratonovitch noise \linebreak $u_t\circ(\dr_x^2)^{-1}\phi dW_t$.  

The main difficulty comes from the fact that the energy of System \eqref{e0.1}:
\begin{equation}
H(u,n)=\dbn{\dr_xu}_{L^2}^2 + \frac{1}{2}\left(\dbn{n}_{L^2}^2+\dbn{\varepsilon\dr_x^{-1}\dr_tn}_{L^2}^2\right)+\int_\ER n\vert u\vert^2dx
\end{equation}
is no longer preserved, and more importantly it has a singular evolution as $\varepsilon$ goes to 0.  Indeed, applying formally the It\^o formula to $H$, 
and using \eqref{e0.1}, terms  which are not controlled by the energy or terms of order $\varepsilon^{-2}$ appear. In order to pass to the limit 
in $\varepsilon$ despite
this singular evolution, we use a predictor-corrector method usually called the Perturbed Test Function method. The method allows us to pass to the limit in a martingale problem and to identify the limit equation, but we also use it to obtain bounds on a modified energy, allowing us to prove the tightness
of the approximating sequence.  This method was first introduced in a finite dimensional setting in \cite{PSV77}, and many examples of applications 
can be found in the book of Garnier, Fouque, Papanicolaou and Solna \cite{FGP07}.  It was generalized to the infinite dimensional case for instance in \cite{DBG12},\cite{DbV12} and used in  \cite{BDT2023}, \cite{DbDMV16}.

The perturbed test function method usually allows to prove convergence in law of solutions. However, due to the special form of the noise considered here, we are able to obtain convergence in probability. The application of the method requires to deal with rather regular solutions in space. Thus, 
despite the fact that the global existence of $L^2$-solution was proved in \cite{Tsu2022}, we need first to prove the global existence of regular
solutions of the \eqref{e0.1}, under spacial regularity assumptions on the noise.
Note that the well-posedness result that we prove is still true without the addition of the damping term,  namely for $\alpha=0$, but the convergence of the system to the stochastic Nonlinear Schr\"odinger equation requires $\alpha>0$. It is not clear at all whether the system \eqref{e0.1} has a limit 
as $\varepsilon$ goes to $0$ in the case $\alpha =0$.

The paper is organized as follows: in Section \ref{notations}, we introduce some notations, and state our main results.  In Section \ref{wellposed} we 
prove  the global existence of regular solutions, in the strong probabilistic sense. The method is an adaptation of Kato's method (see \cite{Ka87}), 
together with estimates
similar to those obtained in \cite{SuSu79}, and some details will be skipped. In Section \ref{driving}, we state our convergence problem in the diffusion approximation 
framework, and give results on the driving process. In Section \ref{sectionenergie}, we prove a modified energy estimate, by using the perturbed test function 
method. Section \ref{tightness} is devoted to the proof of the tightness of the $\vep$-dependent family of solutions. Finally, we prove the convergence to the stochastic NLS equation in  Section $6$, by first proving a weak convergence result, through the study of the martingale problem.
Since we are in a situation where the driving process is an Ornstein-Uhlenbeck process and all correctors can be computed explicitly,  we are able to have explicit martingales when studying 
the martingale problem. We take advantage of this to obtain the convergence in probability of Equation \eqref{e0.1} to Equation \eqref{Schrostoc}. Note that convergence in probability in a similar but simpler problem - the Ornstein-Uhlenbeck solves an equation of order one and 
the setting is in finite dimension - has been obtained in \cite{garnier22}.
Finally, technical results are gathered in the appendix.

\section{Notations and main  results}\label{notations}

Throughout the paper,  for $p\geqslant 1$, we denote by $L^p(\ER,\mathbb{C})$ the Lebesgue space of $p$ integrable $\mathbb{C}$-valued functions on $\ER$, endowed with its usual norm.  For $p=2$, $(\cdot,\cdot)$ is the inner product of $L^2(\ER,\mathbb{C})$ given by $$(f,g)=   Re  \int_\ER f(x)\bar{g}(x)dx,$$ where $Re$ denotes the real part of the integral.
Sometimes we will just write $L^p:=L^p(\ER)$ without any additional precision on the codomain.  In fact, the only $\mathbb{C}$-valued function in our problem is $u$, and all the others functions are $\ER$-valued.  For $s\in\ER$,  we use the Sobolev space $H^s:=H^s(\ER)$ of tempered distributions $f\in\mathcal{S}'(\ER)$ such that $(1+\xi^2)^\frac{s}{2}\hat{f}(\xi)\in L^2(\ER)$, $\hat{f}$ being the Fourier transform of $f$,  endowed with its usual norm.  Besides we will need the homogeneous Sobolev space $\dot{H}^s:=\dot{H}^s(\ER)$ of tempered distributions $f\in\mathcal{S}'(\ER)$ such that $\xi^s\hat{f}(\xi)\in L^2(\ER)$. %This latter space will appear when we state the global well-posedness, but we do not use it for the local well-posedness result. 
 We also denote by $H^s_{loc}(\ER)=H_{loc}^s$ the space of distributions $f$ such that for every $\ell>0$, $f\in H^s([-\ell,\ell])$, the standard Sobolev space on the interval $[-\ell,\ell]$.  Finally,  if  $H,K$ are Hilbert spaces, $\mathcal{L}_2(H,K)$ is the space of Hilbert-Schmidt operators from $H$ to $K$. When $H=K$ we write $\mathcal{L}_2(H)$.

To introduce a mathematical description of the system \eqref{e0.1}, we consider a probability space $(\Omega,\mathcal{F},\PP)$, endowed with a filtration $(\mathcal{F}_t)_{t\geqslant 0}$. We also introduce a sequence $(\beta_k)_{k\in\EN}$ of real-valued, independent  Brownian motions associated to this filtration,
and a complete orthonormal system $(e_k)_{k\in\EN}$  of $L^2(\ER; \ER)$; then $W(t,x,\omega) = \sum_{k\in\EN} \beta_k(t,\omega) e_k(x)$ is
a cylindrical Wiener process on $L^2(\ER)$, and the series $\phi W(t,x) = \sum_{k\in\EN} \phi e_k(x) \beta_k(t)$ defines a $H$-valued Wiener process with covariance operator $\phi \phi^*$, as soon as $\phi\in\mathcal{L}_2(L^2(\ER),H)$.

The system \eqref{e0.1} is then rewritten as
\begin{equation}\label{equationdetaillee}
\left\{\begin{array}{lll}
				i\dr_tu=-\dr_x^2u+nu \\
				\dr_tn=\mu \\
				\varepsilon^2d\mu +\alpha\varepsilon \mu =\dr_x^2(n+\vert u\vert^2)dt +\phi dW(t).
\end{array}
\right.
\end{equation}

The next theorem gives results about existence and uniqueness of regular solutions for the system \eqref{equationdetaillee}, for fixed $\varepsilon >0$: we first state a local existence result 
for initial data in $H^2\times H^1$. This result is obtained by applying Kato's method and a contraction argument, using easy estimates
on the linear equations, that we list in section $2$. If the initial data and the noise are slightly more regular, then we also prove, thanks to an adequate
decomposition of the solution and using estimates similar to those obtained in \cite{SuSu79}, that the solution is more regular and
is globally defined. 

\begin{thm}\label{regular-solutions}
Let $\varepsilon >0$ be fixed. Let $\alpha \geqslant0$ and $\phi\in\mathcal{L}^2(L^2, L^2\cap\dot{H}^{-1})$.  Let $(u_0,n_0,n_1)\in H^2(\mathbb{R})\times H^1(\ER)\times L^2(\ER)\cap\dot{H}^{-1}(\ER)$. Then there exists a unique solution $(u,n, \mu)$ to \eqref{equationdetaillee}, with continuous paths with
values in $H^2(\mathbb{R})\times H^1(\ER)\times L^2(\ER)\cap\dot{H}^{-1}(\ER)$, such that $(u(0),n(0),\mu(0))$ $=(u_0,n_0,n_1)$. This solution is defined on a random interval $[0,\tau(\omega))$ where $\tau(\omega)>0$ is a stopping time such that 
$$\tau(\omega)=+\infty \qquad \text{ or }\qquad \lim_{t\to \tau(\omega)}\max\left(\dbn{u(t)}_{H^2},\dbn{n(t)}_{H^1}\right)=+\infty.$$
If moreover $\phi\in\mathcal{L}^2(L^2,H^1\cap\dot{H}^{-1})$, $u_0\in H^3(\ER)$, $n_0\in H^2(\ER)$ and $n_1\in H^1(\ER)\cap\dot{H}^{-1}(\ER)$, 
then $\tau(\omega)=+\infty$, a.s. and  $(u,n,\mu)$ has trajectories a.s. in $C(\ER^+;H^3(\mathbb{R})\times H^2(\ER)\times H^1(\ER))$.
\end{thm}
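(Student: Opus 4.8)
\emph{Local existence (first part).} The plan is to treat \eqref{equationdetaillee} for fixed $\vep>0$ in mild form: for the Schr\"odinger equation I write $u(t)=S(t)u_0-i\int_0^tS(t-s)\,(n(s)u(s))\,ds$ with $S(t)=e^{it\dr_x^2}$ the free unitary group, and for the pair $(n,\mu)$ I solve the linear damped equation by the variation-of-constants formula, which for fixed $\vep$ expresses $(n,\mu)$ through the forcing $\dr_x^2|u|^2$ and the stochastic convolution driven by $\phi\,dW$. One then runs a Picard iteration on $C([0,T];H^2\times H^1\times(L^2\cap\dot H^{-1}))$. The essential difficulty is that, in dimension one, the product of an $H^1$ function and an $H^2$ function is not in $H^2$ (one derivative is lost on $n$ in $\dr_x^2(nu)$), so the map does not close by naive product estimates. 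Following Kato's method I would therefore use a two-norm scheme: uniform bounds in the strong norm $H^2\times H^1$ come from an energy functional whose top-order terms cancel thanks to the Zakharov structure---exactly as, in the energy $H(u,n)$, the coupling $\int_\ER n|u|^2\,dx$ cancels the dangerous term $\mathrm{Im}\int_\ER n\,\bar u\,\dr_x^2u\,dx$ produced by $\tfrac{d}{dt}\dbn{\dr_xu}_{L^2}^2$---while contraction is proved only in the weaker norm $H^1\times L^2\times\dot H^{-1}$, in which the relevant products do close and the higher norm serves as an a priori bound. For the stochastic forcing I apply It\^o's formula to this functional, controlling the martingale term by Burkholder--Davis--Gundy and the It\^o correction by $\dbn{\phi}_{\mathcal L_2(L^2,L^2\cap\dot H^{-1})}^2$, all finite for fixed $\vep$.

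\emph{Uniqueness and the blow-up alternative.} Uniqueness follows from the same contraction estimate applied to the difference of two solutions in the low norm. The maximal time $\tau$ and the blow-up alternative then come from the standard continuation argument: the local construction restarts from any time at which $(u,n,\mu)$ still lies in $H^2\times H^1\times(L^2\cap\dot H^{-1})$, so the solution extends as long as $\max(\dbn{u(t)}_{H^2},\dbn{n(t)}_{H^1})$ stays finite.

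\emph{Global existence and higher regularity (second part).} Since the potential $n$ is real, the mass $\dbn{u(t)}_{L^2}$ is pathwise conserved. Applying It\^o to $H(u,n)$ and using this conservation together with the Gagliardo--Nirenberg inequality to absorb $\int_\ER n|u|^2\,dx$, I obtain, for fixed $\vep$, a bound on $\dbn{u}_{H^1}$ and $\dbn{n}_{L^2}$; it degenerates as $\vep\to0$ but is finite for each fixed $\vep$. This controls only the energy-level norm, which is why the blow-up alternative is stated at the level $H^2\times H^1$: to reach it one must bound the next-order energy, and this is precisely what the stronger hypotheses $\phi\in\mathcal L_2(L^2,H^1\cap\dot H^{-1})$, $u_0\in H^3$, $n_0\in H^2$, $n_1\in H^1\cap\dot H^{-1}$ provide. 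Here I would decompose $(n,\mu)$ into the stochastic convolution solving the linear damped equation driven by $\phi\,dW$---whose norms in the relevant spaces are bounded explicitly through the Hilbert--Schmidt norm of $\phi$---and a remainder solving a random equation with pathwise, more regular forcing, on which the deterministic Zakharov energy estimate (the same cancellation, now one derivative higher) applies. Gronwall's lemma yields a global bound on $\dbn{u}_{H^2}+\dbn{n}_{H^1}$, so $\tau=+\infty$ a.s.\ by the blow-up alternative, and repeating one derivative higher propagates the $H^3\times H^2\times H^1$ regularity.

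\emph{Main obstacle.} The crux is the derivative loss: neither the Schr\"odinger group nor the wave equation (undamped when $\alpha=0$, merely damped when $\alpha>0$) smooths in space, so the top-order coupling terms must be removed by the Zakharov cancellation, and this cancellation has to survive the It\^o calculus. The main technical point is to check that the stochastic and It\^o-correction terms generated when differentiating the higher-order energy do not reintroduce an uncontrolled loss and remain integrable via Burkholder--Davis--Gundy and the regularity of $\phi$, all while keeping these cancellations intact as the noise is isolated through the stochastic-convolution decomposition.
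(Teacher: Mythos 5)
Your overall architecture does match the paper's: a mild formulation and fixed point for local existence, uniqueness and the blow-up alternative by contraction in a weaker norm plus continuation, and, for globality, the decomposition $n=m+Z^\vep$ with $Z^\vep$ the stochastic convolution of the damped wave equation, so that $(u,m)$ solves a random PDE to which pathwise Sulem--Sulem type estimates and Gr\"onwall apply. (This decomposition also makes your It\^o/Burkholder--Davis--Gundy steps unnecessary: the noise is additive and enters only the wave component, so after subtracting $Z^\vep$ the paper needs no stochastic calculus in this section; the regularity of $Z^\vep$ is taken pathwise from \cite[Theorem 6.10]{DPZ14}.)

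The genuine gap is in the mechanism you invoke to beat the one-derivative loss, which is the heart of the proof. You assert that the strong-norm bound in $H^2\times H^1$ follows from an energy functional ``whose top-order terms cancel thanks to the Zakharov structure --- exactly as in $H(u,n)$, one derivative higher''. That cancellation is not the same one derivative higher: differentiating the Schr\"odinger equation twice in $x$ gives
\begin{equation*}
\dr_t\dbn{\dr_x^2u}_{L^2}^2=2\,Im\int_\ER(\dr_x^2n)\,u\,\dr_x^2\bar u\,dx+4\,Im\int_\ER \dr_xn\,\dr_xu\,\dr_x^2\bar u\,dx,
\end{equation*}
and the first term requires $n\in H^2$, one derivative more than the solution space provides; at the base level the corresponding top-order term $Im\int_\ER n\vert\dr_xu\vert^2dx$ vanishes identically because $n$ is real, which is why $H(u,n)$ closes there. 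No second-order analogue of the coupling $\int_\ER m\vert u\vert^2dx$ removes $\int_\ER \dr_x^2n\,u\,\dr_x^2\bar u\,dx$ without further substitutions through the wave equation, and your proposal does not construct or verify such a corrected functional. The paper's device is different: Kato's method is implemented with the time-derivative spaces $Z_T=\{u\in L^\infty(0,T;H^2),\ \dr_tu\in L^\infty(0,T;L^2)\}$ and $Y_T=\{n\in L^\infty(0,T;H^1),\ \dr_tn\in L^\infty(0,T;L^2)\}$, the contraction estimates bear on $\dr_tu$, and the $H^2$ (resp.\ $H^3$) regularity is then recovered \emph{elliptically} from $\dr_x^2u=nu-i\dr_tu$; likewise the global bounds are the time-derivative identities \eqref{eq219}--\eqref{eq221} for $\dbn{\dr_tu}_{H^1}$, $\dbn{\dr_tm}_{L^2}$, $\dbn{\vep\dr_tV}_{L^2}$, followed by recovery of $\dbn{m}_{H^1}$ from \eqref{eqV} and of $u\in L^\infty(0,T;H^3)$ from the Schr\"odinger equation. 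Without either this time-derivative scheme or an explicitly built second-order modified energy, the strong-norm bound on which both your contraction argument and your global continuation rest is unproven.
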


The result of the previous theorem may of course be generalized to random $\mathcal{F}_0$-measurable initial data, by conditioning on 
$\mathcal{F}_0$.

We now turn to the main result of the paper, which is the convergence of the solution as $\varepsilon$ tends to $0$, to the solution of the 
stochastic nonlinear Schr\"odinger equation. This result will require more regularity on the noise, and we assume from now on that
$\phi\in\mathcal{L}^2(L^2,H^1\cap\dot{H}^{-1})$. In order to state the result, we rewrite the system in a different form: consider an adapted
solution $Z^\varepsilon$ of the linear damped wave equation
\begin{equation}\label{eqZeps}
\varepsilon^2d(\dr_tZ^\varepsilon)+\alpha\varepsilon\dr_tZ^\varepsilon=\dr_x^2Z^\varepsilon dt+\phi dW_t,\; Z^\varepsilon(0)=\partial_tZ^\varepsilon(0)=0.
\end{equation}
%with initial conditions $Z^\varepsilon(0)=\partial_tZ^\varepsilon(0)=0$.
Its  trajectories are a.s. time continuous with values in $H^2$ (see section $2$ for a more precise definition). 
Then, $(u,n,\mu)$ is a solution of \eqref{equationdetaillee} if and only
if $n=m+Z^\varepsilon$, with $(u,m)$ solution of 
\begin{equation} \label{systemetranslate}
\left\{\begin{array}{l}
i\dr_tu=-\dr_x^2u+(m+Z^\varepsilon)u, \\[0.2cm]
\varepsilon^2\dr_t^2m+\alpha\varepsilon \dr_tm=\dr_x^2(m + \vert u\vert^2).
\end{array}\right.
\end{equation}
Note that if $u_0 \in H^3(\ER)$, $m_0 \in H^2(\ER)$ and $m_1\in H^1(\ER)\cap\dot{H}^{-1}(\ER)$, then applying Theorem~\ref{regular-solutions},
with $\mathcal{F}_0$-measurable initial data given by $u_0$, $n_0=m_0$ and $n_1=m_1$, we deduce that there is
a unique global solution of the system \eqref{systemetranslate} with $u(0)=u_0$, $m(0)=m_0$ and $\partial_t m(0)=m_1$. The convergence result 
is then as follows:

\begin{thm}\label{thmfinal}
Let $\alpha>0$. For any $T>0$,  and for any $u_0\in H^3(\ER),m_0\in H^2(\ER)\cap\dot{H}^{-1}(\ER),m_1\in H^1(\ER)\cap\dot{H}^{-1}(\ER)$,  the process $(u^\varepsilon,m^\varepsilon)$ solution of the system \eqref{systemetranslate} with $u(0)=u_0$, $m(0)=m_0$, $\dr_tm(0)=m_1$ and $\phi\in\mathcal{L}_2(L^2(\ER),H^3(\ER)\cap \dot{H}^{-4}(\ER))$ satisfies: $(u^\vep)$ converges in probability in $C([0,T],H^s_{loc}(\ER))$
%\times \red{C([0,T],H^{s-1}_{loc}(\ER))}$,
 for any $s<1$, to $u$ solution of 
\begin{equation}\label{Schrostoc}
idu=(-\dr_x^2u-\vert u\vert^2u-\frac{i}{2}uF)dt-u(\dr_x^2)^{-1}\phi dW_t,
\end{equation}
 where $F(x)=\sum_{k\in\EN}\big((\dr_x^2)^{-1}\phi e_k\big)^2(x)$.
\end{thm}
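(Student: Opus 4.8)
The plan is to place the problem in the diffusion-approximation framework and run the perturbed test function method, as announced. The starting observation is that the potential $Z^\varepsilon$ appearing in \eqref{systemetranslate} is, after rescaling, a fast Ornstein--Uhlenbeck process: neglecting the $\varepsilon^2\partial_t^2$ term in \eqref{eqZeps} one sees that $Z^\varepsilon$ relaxes on the time scale $\varepsilon$ with amplitude of order $\varepsilon^{-1/2}$, so that $\varepsilon^{1/2}Z^\varepsilon$ is of order one and the coupling term $Z^\varepsilon u$ has the classical form $\varepsilon^{-1/2}(\text{fast, centered})\times u$. First I would make this precise: introduce the driving process $p^\varepsilon=(Z^\varepsilon,\varepsilon\partial_t Z^\varepsilon)$, identify its Gaussian invariant measure and its generator $\mathcal{L}^\varepsilon=\varepsilon^{-1}\mathcal{L}$, and record moment and mixing bounds on $Z^\varepsilon$ in the relevant Sobolev spaces (this is where $\phi\in\mathcal{L}_2(L^2,H^3\cap\dot{H}^{-4})$ is used). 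Because the process is Ornstein--Uhlenbeck, the Poisson equations $\mathcal{L}\chi=g$ producing the correctors can be solved explicitly, a feature that will eventually give convergence in probability rather than only in law.

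The second, and I expect hardest, step is the a priori bound. The natural energy $H(u,n)$ is not conserved and, worse, its It\^o differential contains terms of order $\varepsilon^{-2}$ together with terms not controlled by $H$ itself. I would therefore build a modified energy $H^\varepsilon=H+\varepsilon H_1+\varepsilon^2 H_2$, choosing the correctors $H_1,H_2$ (again explicitly, via the Poisson equation for $\mathcal{L}$) so that the singular $\varepsilon^{-2}$ contributions and the uncontrolled martingale brackets cancel in $dH^\varepsilon$, leaving a right-hand side bounded by $H^\varepsilon$ plus integrable remainders. A Gronwall argument then yields bounds on $(u^\varepsilon,m^\varepsilon,\partial_t m^\varepsilon)$ in $H^1\times L^2\times\dot{H}^{-1}$, uniform in $\varepsilon\in(0,1]$ and with moments of all orders; the damping $\alpha>0$ is essential here, since it is what makes the fast wave part of $m^\varepsilon$ dissipative. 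Propagating the estimate to one more derivative, using the $H^3/H^2$ regularity of the data and of $\phi$, gives the bounds needed to run the test functions on $u$.

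With these bounds, the third step is tightness. The uniform $H^1$ bound gives, by Rellich's theorem, compactness of $(u^\varepsilon(t))$ in $H^s_{loc}$ for every $s<1$; combined with an equicontinuity-in-time estimate drawn from the integrated form of the $u$-equation (the fast terms being controlled through the explicit primitive $\int_0^t Z^\varepsilon\,ds$ below), this yields tightness of $(u^\varepsilon)$ in $C([0,T];H^s_{loc})$, $s<1$. The loss from $H^1$ to $s<1$ is exactly the price of local compactness and explains the statement's regularity index. I would run the analogous argument for $m^\varepsilon$: the damped wave equation forces $m^\varepsilon+|u^\varepsilon|^2$ to relax, so that $\int_0^t(m^\varepsilon+|u^\varepsilon|^2)\,ds\to0$, and together with the strong local convergence of $u^\varepsilon$ this identifies the cubic term $m^\varepsilon u^\varepsilon\to-|u|^2u$ in the limit.

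Finally I would identify the limit through the martingale problem and upgrade the mode of convergence. For a smooth cylindrical test function $\psi$ on $u$, set $\psi^\varepsilon=\psi+\varepsilon^{1/2}\psi_1+\varepsilon\psi_2$ with $\psi_1,\psi_2$ the explicit correctors; by construction $\psi^\varepsilon(u^\varepsilon(t))-\int_0^t\mathcal{A}^\varepsilon\psi^\varepsilon\,ds$ is a martingale and $\mathcal{A}^\varepsilon\psi^\varepsilon\to\mathcal{L}\psi$, where $\mathcal{L}$ is the generator of \eqref{Schrostoc}; the diffusion coefficient, computed from the integrated covariance of $Z^\varepsilon$, produces both the Stratonovich noise $-u(\partial_x^2)^{-1}\phi\,dW$ and the It\^o correction $-\tfrac{i}{2}uF$ with $F=\sum_k((\partial_x^2)^{-1}\phi e_k)^2$. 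Passing to the limit along a tight subsequence shows every limit point solves \eqref{Schrostoc}. The special structure now yields convergence in probability: integrating \eqref{eqZeps} and using $Z^\varepsilon(0)=\partial_t Z^\varepsilon(0)=0$ gives the exact identity
\[
\int_0^t Z^\varepsilon\,ds=-(\partial_x^2)^{-1}\phi W(t)+\varepsilon^2(\partial_x^2)^{-1}\partial_t Z^\varepsilon(t)+\alpha\varepsilon(\partial_x^2)^{-1}Z^\varepsilon(t),
\]
whose last two terms are $O(\varepsilon^{1/2})$ in probability, so that the limiting noise is driven by the very same $W$. Hence \eqref{Schrostoc} is an equation driven by $W$; invoking its pathwise uniqueness and a Gy\"ongy--Krylov argument (or directly the explicit martingales) promotes the convergence in law of $(u^\varepsilon)$ to convergence in probability in $C([0,T];H^s_{loc})$, $s<1$.
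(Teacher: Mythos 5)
Your overall architecture coincides with the paper's: rescale the driver to the stationary Ornstein--Uhlenbeck-type process \eqref{equationzeps} with generator $\varepsilon^{-1}\MM$, solve the Poisson equations explicitly (Lemma \ref{inverse1}), build a modified energy with two correctors, prove tightness in $C([0,T];H^s_{loc})$, identify the limit through the martingale problem, and upgrade to convergence in probability by noting that the limiting martingale is driven by the \emph{same} Wiener process $W$ followed by a Gy\"ongy--Krylov argument. Your primitive identity $\int_0^t Z^\varepsilon ds=-(\dr_x^2)^{-1}\phi W(t)+\varepsilon^2(\dr_x^2)^{-1}\dr_tZ^\varepsilon(t)+\alpha\varepsilon(\dr_x^2)^{-1}Z^\varepsilon(t)$ is correct and is exactly the mechanism behind the paper's first-corrector martingale $X^\varepsilon_t=\int_0^t(iu^\varepsilon(\dr_x^2)^{-1}\phi\, dW_s,h)$ in \eqref{Xepsilon}, with the second-corrector contribution $Y^\varepsilon\to0$ as in \eqref{Yepsi to 0}. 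Two steps, however, are overstated as written and would fail without the fixes the paper implements.

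First, the modified energy estimate does not close by a plain Gr\"onwall argument yielding bounds ``uniform in $\varepsilon$ with moments of all orders.'' After the cancellations, the generator bound is $\mathcal{L}^\varepsilon H^\varepsilon\leqslant \varepsilon K_t^2+BK_t+C$ (Proposition \ref{boundpolynome}), which is \emph{quadratic} in the energy, and the driver only satisfies $\varepsilon^{-\delta}$ bounds on $[0,\tau^\varepsilon_\delta]$ (Proposition \ref{propprocstationnaire}). Consequently the paper obtains only $\mathbb{E}\big[\sup_{t\leqslant\tau^\varepsilon\wedge T}K^2(t)\big]\leqslant C(T)$ up to a stopping time $\tau^\varepsilon$ with $\mathbb{P}(\tau^\varepsilon\leqslant T)\to0$ (Proposition \ref{borneespK}), and only at the level $H^1\times L^2\times\dot{H}^{-1}$. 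No uniform-in-$\varepsilon$ bound on an extra derivative is proved, nor is one needed: the test-function computations are justified by the fixed-$\varepsilon$ regularity $H^3\times H^2\times H^1$ of Theorem \ref{regular-solutions}, and every limit is taken on stopped processes. Your plan to ``propagate the estimate to one more derivative'' uniformly would reintroduce singular terms for which no corrector structure is exhibited.

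Second, time-equicontinuity of $u^\varepsilon$ cannot be extracted from the integrated equation via the primitive alone. Integrating by parts in $\int_s^t Z^\varepsilon u\,dr$ produces, through $\dr_r u$, the cross term $\int_s^t\big(\int_0^r Z^\varepsilon\big)Z^\varepsilon u\,dr$, a product of two fast factors whose time integral is of order one per unit time --- it is precisely the term generating the It\^o correction $-\frac{i}{2}uF$ --- so it is not small with $|t-s|$ uniformly in $\varepsilon$; one more corrector is unavoidable. The paper handles this by verifying Aldous' criterion with the perturbed-test-function method applied to $\varphi(u)=\dbn{u}^2_{H^{-1/2}}$ and $\varphi(u)=\langle u,h\rangle_{H^{-1/2}}$ (Lemmas \ref{lemmaAldous1} and \ref{lemmaaldous2}, Proposition \ref{hypothese2}). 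Finally, your corrector powers $(\varepsilon,\varepsilon^2)$ versus the paper's $(\sqrt{\varepsilon},\varepsilon)$ after the rescaling \eqref{equationzeps} are a harmless bookkeeping difference, as is your omission of the weak-limit identifications $\tilde m=-|\tilde u|^2$ and $\tilde m^\varepsilon\tilde u^\varepsilon\rightharpoonup -|\tilde u|^2\tilde u$ (Lemmas \ref{weaklimmtilde} and \ref{convweakmu}), which your relaxation remark captures in substance.
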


Note that, under the above assumptions on the initial data $u_0$, and on the operator $\phi$, the existence of a unique solution of \eqref{Schrostoc}
with a.s. $H^1$-valued continuous paths is a consequence of Theorems 4.1 and 4.6 in \cite{DBDb03}. Moreover, it is not difficult to prove that given
the assumption on the noise, the solution $u$ has actually $H^2$-valued continuous paths.

\section{Well-posedness for fixed $\vep$} \label{wellposed}

This section is devoted to the proof of Theorem \ref{regular-solutions}. We start with the proof of the local well-posedness result, and propagation
of regularity, then global existence for more regular initial data is proved in Subsection \ref{global}, adapting the arguments of \cite{SuSu79}.

\subsection{Local Well-Posedness} \label{local}
In this section,  we  prove local well-posedness of the problem \eqref{equationdetaillee}. Here $\varepsilon$ is fixed so there is no loss of generality 
in assuming that $\varepsilon=1$.  We start with a first result which is proved by a fixed point argument, adapting the method used by Kato 
in \cite{Ka87} for the Schr\"odinger equation. Then we state a second result which shows that we can recover more regularity on our solutions.  
This extra regularity will be needed in order to show afterwards that the problem \eqref{equationdetaillee} is actually globally well-posed.

	\begin{prop}\label{solutionH2}
Let $\alpha \geqslant0$ and $\phi\in\mathcal{L}^2(L^2, L^2\cap\dot{H}^{-1})$.  Let $(u_0,n_0,n_1)\in H^2(\mathbb{R})\times H^1(\ER)\times L^2(\ER)\cap\dot{H}^{-1}(\ER)$. Then there exists a unique solution $(u,n,\mu)$ to \eqref{equationdetaillee} with continuous valued paths in $H^2(\mathbb{R})\times H^1(\ER)\times L^2(\ER)\cap\dot{H}^{-1}(\ER)$ such that $(u(0),n(0),\mu(0))=(u_0,n_0,n_1)$. This solution is defined on a random interval $[0,\tau(\omega))$ where $\tau(\omega)>0$ is a stopping time such that $$\tau(\omega)=+\infty \qquad \text{ or }\qquad \lim_{t\to \tau(\omega)}\max\left(\dbn{u(t)}_{H^2},\dbn{n(t)}_{H^1}\right)=+\infty.$$
\end{prop}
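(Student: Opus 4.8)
The plan is to prove local well-posedness of the system \eqref{equationdetaillee} (with $\vep=1$) by recasting it as a fixed-point problem in the space of continuous paths with values in $H^2\times H^1\times(L^2\cap\dot H^{-1})$, and then to bootstrap a local-in-time existence interval into a maximal one with the stated blow-up alternative. First I would rewrite the three scalar equations as a single mild/integral formulation. For the Schr\"odinger component I would use the free group $S(t)=e^{it\dr_x^2}$ and write $u(t)=S(t)u_0 - i\int_0^t S(t-s)\,n(s)u(s)\,ds$. For the wave part (with $\vep=1$), writing $(n,\mu)$ as a first-order system $\dr_t n=\mu$, $d\mu=\big(\dr_x^2 n - \alpha\mu + \dr_x^2|u|^2\big)dt + \phi\,dW$, I would solve the linear damped-wave semigroup explicitly in Fourier variables and represent $(n,\mu)$ by Duhamel's formula, with the stochastic convolution $\int_0^t\!\cdots\,\phi\,dW$ carrying the noise. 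The linear estimates alluded to in the text (listed in the paper's section 2) give the boundedness of these semigroups on the relevant Sobolev scales; in particular the damped-wave propagator should map $H^1\times(L^2\cap\dot H^{-1})$ into itself and the stochastic convolution should live a.s. in $C([0,T];H^1\times(L^2\cap\dot H^{-1}))$ under $\phi\in\mathcal L^2(L^2,L^2\cap\dot H^{-1})$.

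Next I would set up the contraction. Fix a large ball and a small time $T$, and define the map $\Phi:(u,n,\mu)\mapsto(\tilde u,\tilde n,\tilde\mu)$ by the Duhamel formulas above. The key is to control the two nonlinear couplings, $nu$ in the Schr\"odinger equation and $|u|^2$ in the wave equation. In dimension one the algebra/Moser estimates are favorable: $H^1$ and $H^2$ are Banach algebras, so $\|nu\|_{H^2}\lesssim\|n\|_{H^1}\|u\|_{H^2}$ (losing only as much regularity as $n$ has) and $\|\,|u|^2\|_{H^1}\lesssim\|u\|_{H^2}\|u\|_{H^1}$; since the source term $\dr_x^2|u|^2$ enters the $\mu$-equation at the level of $\dr_x^2(H^1)\subset H^{-1}$, which is consistent with the $L^2\cap\dot H^{-1}$ regularity of $\mu$, the regularities match up. I would use these algebra estimates together with the semigroup bounds to show that for $T$ small (depending on the radius of the ball) $\Phi$ maps the ball into itself and is a contraction in the slightly weaker norm $H^1\times L^2\times\dot H^{-1}$, which is the standard device to avoid a derivative loss in the difference estimate. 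The Banach fixed point theorem then yields a unique local solution with continuous paths in the claimed space, and uniqueness in the full space follows from the difference estimate.

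Finally, to obtain the maximal solution and the blow-up alternative, I would glue together local solutions: by the fixed-point construction the existence time $T$ depends only on $\max(\|u\|_{H^2},\|n\|_{H^1})$ (the $\dot H^{-1}$ and $\mu$ norms being controlled along the way), so one defines $\tau(\omega)=\sup\{T:\text{solution exists on }[0,T]\}$ and argues that if $\tau<\infty$ then $\max(\|u(t)\|_{H^2},\|n(t)\|_{H^1})\to\infty$ as $t\to\tau$, for otherwise the uniform-time local result would extend the solution past $\tau$. Measurability of $\tau$ as a stopping time and adaptedness of the solution follow from the adaptedness of the stochastic convolution and the pathwise continuity of the construction. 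The main obstacle I anticipate is handling the noise term carefully at the level of regularity $H^1\times(L^2\cap\dot H^{-1})$: one must verify that the stochastic convolution for the damped wave equation has the right space-time regularity (time continuity into $H^1$, and the $\dot H^{-1}$-component well-defined) under the stated Hilbert--Schmidt assumption on $\phi$, and that it enters the $u$-equation, through the coupling $nu$, in a way compatible with the deterministic fixed point — this is exactly where the pathwise (rather than purely $L^2(\Omega)$) formulation via the change of variables $n=m+Z$ (used later in the paper) or a direct pathwise treatment of the convolution becomes essential, since the nonlinearity is multiplicative.
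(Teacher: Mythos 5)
There is a genuine gap at the heart of your contraction argument: the product estimate $\|nu\|_{H^2}\lesssim \|n\|_{H^1}\|u\|_{H^2}$ is false. Expanding $\dr_x^2(nu)=u\,\dr_x^2n+2\dr_xn\,\dr_xu+n\,\dr_x^2u$, the first term requires $n\in H^2$; for instance, taking $n(x)=(1-\vert x\vert)_+\in H^1\setminus H^2$ and $u\in H^2$ smooth with $u\equiv 1$ on $[-1,1]$ gives $nu=n\notin H^2$. Consequently the Duhamel term $\int_0^tU(t-s)n(s)u(s)\,ds$ cannot be estimated in $H^2$ using only $n\in H^1$, so your map $\Phi$ does not preserve a ball of $C([0,T];H^2\times H^1\times(L^2\cap\dot{H}^{-1}))$: the derivative loss bites already at the level of ball invariance, not only in the difference estimate, and the standard device you invoke (contracting in the weaker norm $H^1\times L^2\times\dot{H}^{-1}$) repairs only the latter. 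This is precisely why the paper does not run a direct fixed point in $C_tH^2\times C_tH^1$ but uses Kato's method in spaces carrying a time derivative, $Z_T=\{u\in L^\infty(0,T;H^2),\ \dr_tu\in L^\infty(0,T;L^2)\}$ and $Y_T=\{n\in L^\infty(0,T;H^1),\ \dr_tn\in L^\infty(0,T;L^2)\}$: the contraction estimates for $u\mapsto\Gamma_1(u,n)$ are performed at the level of $\dr_tu$ in $L^2$ (no product is ever taken in $H^2$), and the $H^2$ bound on $u$ is recovered afterwards, pointwise in time, from the elliptic identity $\dr_x^2u=nu-i\dr_tu$. You name Kato's method in passing but never implement this time-derivative structure, and without it the central $H^2$ estimate in your scheme does not close.

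Two smaller discrepancies are worth noting. First, the paper's fixed point is nested rather than simultaneous: for $n$ in a ball of $Y_{T_0}$ it first constructs the solution map $\mathcal{T}(n)$ of the Schr\"odinger equation together with Lipschitz bounds in $n$, and then contracts $\Gamma_3(n,\mu)=\Gamma_2(\mathcal{T}(n),n,\mu)$ on $B_R(Y_T\times H_T)$ with $H_T=L^\infty(0,T;L^2\cap\dot{H}^{-1})$; your simultaneous map could be made to work once the Kato spaces are in place, so this is mostly bookkeeping. Second, the decomposition $n=m+Z^\vep$ that you suggest may be ``essential'' is not used here: the paper treats the stochastic convolution $\int_0^{\cdot}S_\alpha(\cdot-s)(0,\phi)\,dW(s)$ pathwise, placing it a.s. in $Y_{T_0}\times H_{T_0}$ by \cite[Theorem 6.10]{DPZ14} and absorbing it by choosing the random radius $R$ large and then the random time $T$ small (depending on $R$); the translation by $Z^\vep$ is reserved for the global-existence argument of Subsection \ref{global}. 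Your gluing argument for the maximal time and the blow-up alternative is standard and matches the paper's ``classical arguments,'' but as written it rests on a local theory whose key estimate fails.
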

\begin{proof}
Let us fix $(u_0,n_0,n_1)$ satisfying the assumptions of Proposition \ref{solutionH2} and rewrite the system \eqref{equationdetaillee} 
(with $\vep=1$) in the mild form:
\begin{equation}\label{integralform}
\left\{\begin{array}{ll}
u(t)=\Gamma_1(u,n) (t)\\[0.2cm]
(n(t),\mu(t))=\Gamma_2(u,n,\mu)(t),
\end{array} \right.
\end{equation}
with 
\begin{equation}
\label{Gamma1}
\Gamma_1(u,n)(t)= U(t)u_0-i\int_0^t U(t-s)n(s)u(s)ds
\end{equation}
and
\begin{eqnarray}
\label{Gamma2}
\Gamma_2(u,n,\mu)(t) & =& S_\alpha(t)(n_0,n_1) + \int_0^t S_\alpha(t-s) (0, \dr_x^2\vert u\vert^2(s))ds \\
		& & + \int_0^t S_\alpha(t-s)(0, \phi)dW(s).
\end{eqnarray}
Here, $U(t)=e^{it\dr_x^2}$ is the free Schr\"odinger group on $L^2(\ER)$, while $S_\alpha(t)$ is the semi-group associated with the linear damped
wave equation, that is $S_\alpha(t) (n_0,n_1)$ is the solution of 
$$
\left\{ \begin{array}{l} \dr_t n=\mu\\
\dr_t \mu +\alpha \mu =\dr_x^2 n
\end{array} \right.
$$
with $(n(0),\mu(0))=(n_0,n_1)$. Note that $S_\alpha$ has an explicit expression in Fourier variables, and is a contraction semi-group in 
$H^1(\ER)\times L^2(\ER) \cap \dot{H}^{-1}(\ER)$ (see Section  \ref{salpha} in the Appendix).
Now, let us define, for $T>0$, the spaces
$$
Z_T=\{ u\in L^{\infty}(0,T;H^2(\ER),\; \dr_t u \in L^{\infty}(0,T;L^2(\ER)) \}
$$
and
$$
Y_T= \{n\in L^\infty(0,T;H^1(\ER)), \; \dr_tn\in L^\infty(0,T;L^2(\ER))\}.
$$
Let $R_0>0$, and $T_0>0$ be fixed. Using the arguments of \cite{Ka87}, it is not difficult to see that for all $n\in B_{R_0}(Y_{T_0})$,
the application $u\mapsto \Gamma_1(u,n)$ is a contraction in $Z_T$, for $T>0$ sufficiently small, depending only on $\|u_0\|_{H^2}$,
and $R_0$. Denoting then by $\mathcal{T}(n)$ the solution of
\begin{equation} \label{Schrod}
\left\{\begin{array}{l}
i \dr_t u=- \dr_x^2 u+nu\\
u(0)=u_0,
\end{array}\right.
\end{equation}
that is obtained in this way on $[0,T_0]$, it is easy to see that, for $T$ sufficiently small, depending again only on $R_0$, there exists a constant 
$C>0$ such that if $u_1=\mathcal{T}(n_1)$, and $u_2=\mathcal{T}(n_2)$, with $n_1, n_2 \in B_{R_0}(Y)$, then
$$
\Vert \dr_t u_1-\dr_t u_2\Vert_{L^{\infty}(0,T;L^2)} \leqslant C T (\|n_1-n_2\|_{Y_T} +\|u_1-u_2\|_{L^{\infty}(0,T;H^1)}).
$$
Moreover, using again equation \eqref{Schrod}, 
$$
\dbn{u_1-u_2}_{L^\infty(0,T;H^2)}  \leqslant C\dbn{n_1-n_2}_{Y_T},$$
where $C$, and $T$ small enough, depend only on $R_0$.

Now, setting $\Gamma_3(n,\mu)=\Gamma_2(\mathcal{T}(n),n,\mu)$ (where we recall that $\Gamma_2$ is defined in \eqref{Gamma2})
it is easily seen that $\Gamma_3$ is contracting in $B_R(Y_T\times H_T)$, where $H_T=L^{\infty}(0,T;L^2\cap \dot{H}^{-1})$, provided
$T$ is small enough, depending only on $R$. On the other hand, by \cite[Theorem 6.10]{DPZ14}, the stochastic convolution
$\int_0^{\cdot} S_\alpha (\cdot -s)(0,\phi)dW(s)$ is a.s. in $Y_{T_0}\times H_{T_0}$, and it follows that the ball $B_R(Y_T\times H_T)$ is preserved 
by $\Gamma_3$, provided the random constant $R$ is chosen sufficiently large (depending on the $Y_{T_0}\times H_{T_0}$-norm
of the stochastic convolution, $\|n_0\|_{H^1}$ and $\|n_1\|_{L^2}$) and $T$ is chosen small enough, depending on $R$.
The conclusion of Proposition \ref{solutionH2} follows by classical arguments.
\end{proof}

The next proposition  shows that if the initial data is more regular, then the solution of the system is also regular, as long as it exists in the sense of 
Proposition  \ref{solutionH2}.

\begin{prop}\label{solutionH3}
Let $\phi\in\mathcal{L}^2(L^2,H^1\cap\dot{H}^{-1})$ and let $(u,n,\mu)$ be the solution of \eqref{equationdetaillee} given by Proposition \ref{solutionH2},  and $\tau$ the corresponding stopping time.  If $u_0\in H^3(\ER)$, $n_0\in H^2(\ER)$, and $n_1\in H^1(\ER)\cap\dot{H}^{-1}(\ER)$, then  $(u,n,\mu)$ has a.s. continuous trajectories on $[0,\tau)$ with values in $H^3(\mathbb{R})\times H^2(\ER)\times H^1(\ER)\cap \dot{H}^{-1}(\ER)$.
\end{prop}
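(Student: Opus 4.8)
The plan is to propagate the $H^3 \times H^2 \times (H^1 \cap \dot H^{-1})$ regularity on the \emph{same} existence interval $[0,\tau)$ produced by Proposition~\ref{solutionH2}, rather than re-solving the fixed point. The natural strategy is a bootstrap / a priori estimate argument: working on a subinterval $[0,T]$ with $T<\tau$, on which we already know from Proposition~\ref{solutionH2} that $\|u(t)\|_{H^2}$ and $\|n(t)\|_{H^1}$ stay bounded by some (random) constant, I would show that the higher norms $\|u(t)\|_{H^3}$, $\|n(t)\|_{H^2}$, $\|\mu(t)\|_{H^1 \cap \dot H^{-1}}$ remain finite and continuous in $t$. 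Since $\varepsilon$ is fixed we may take $\varepsilon=1$ as before. The key structural fact to exploit is that the extra derivatives gain exactly one order of regularity at each level, so the coupling $nu$ in the Schr\"odinger equation and $\dr_x^2|u|^2$ in the wave equation land in the right spaces once the $H^2\times H^1$ control is in hand.

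First I would set up the linear estimates level by level. For the Schr\"odinger part, differentiating \eqref{Schrod} once in $x$ (or applying $\dr_x$ to the Duhamel formula \eqref{Gamma1}) gives an equation for $\dr_x u$ with forcing $\dr_x(nu)=(\dr_x n)u+n(\dr_x u)$; controlling this in $H^2$ requires $\|\dr_x n\,u\|_{H^2}$ and $\|n\,\dr_x u\|_{H^2}$, which by the algebra property of $H^2(\ER)$ (valid since $2>1/2$) are bounded by $\|n\|_{H^2}\|u\|_{H^3}$ plus lower-order terms already under control. For the damped-wave part, since $S_\alpha$ is a contraction semigroup on $H^{s}\times H^{s-1}\cap\dot H^{-1}$ at every level (by its explicit Fourier expression, cf. Section~\ref{salpha}), I would apply it on the translated scale $H^2\times H^1$: the deterministic forcing $\dr_x^2|u|^2$ must be estimated in $H^1$, i.e. $\||u|^2\|_{H^3}\lesssim \|u\|_{H^3}^2$ again by the algebra property, while the regularity of the stochastic convolution is upgraded exactly by the hypothesis $\phi\in\mathcal L^2(L^2,H^1\cap\dot H^{-1})$, giving via \cite[Theorem 6.10]{DPZ14} a.s.\ continuous $H^2\times(H^1\cap\dot H^{-1})$-valued paths for the convolution term.

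Assembling these, I would close a Gronwall-type inequality for the quantity $\mathcal N(t)=\|u(t)\|_{H^3}+\|n(t)\|_{H^2}+\|\mu(t)\|_{H^1\cap\dot H^{-1}}$ on $[0,T]$, of the schematic form
\begin{equation*}
\mathcal N(t)\leqslant C\big(\mathcal N(0)+\text{conv}\big)+C\int_0^t\big(1+\|u(s)\|_{H^2}+\|n(s)\|_{H^1}\big)\,\mathcal N(s)\,ds,
\end{equation*}
where the prefactor involves only the already-bounded $H^2\times H^1$ norms. Because those lower norms are bounded on $[0,T]$ by Proposition~\ref{solutionH2}, Gronwall's lemma yields a finite bound on $\mathcal N$ up to any $T<\tau$, and the continuity of the high-regularity trajectories then follows from the Duhamel representation together with strong continuity of $U(t)$ and $S_\alpha(t)$ on the higher spaces. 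Since this works for every $T<\tau$, the regularity holds on all of $[0,\tau)$, proving the proposition.

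I expect the main obstacle to be the honest bookkeeping of the nonlinear product estimates at the top regularity level, in particular handling $\dr_x^2(nu)$ in $L^2$ and $\dr_x^2|u|^2$ in $H^1$ without losing more than one derivative: the point is to distribute derivatives so that the highest-order factor is always paired with a low-order factor controlled by the $H^2\times H^1$ bound, which is exactly where the algebra structure of $H^s(\ER)$ for $s>1/2$ and the one-derivative smoothing in the wave part must be used carefully. A secondary subtlety is that the estimates involve the stochastic convolution, so the bounds are a.s.\ finite (random) rather than deterministic, but since everything is pathwise after conditioning and the convolution lives in the requisite space by \cite[Theorem 6.10]{DPZ14}, no genuine stochastic difficulty arises beyond this observation; these are the technical points the authors signal may be "skipped" as being similar to \cite{SuSu79}.
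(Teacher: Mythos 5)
There is a genuine gap, and it sits exactly where you locate the ``main obstacle'': the top-order product estimate for the Schr\"odinger forcing is false. If $n\in H^2$ and $u\in H^3$, then in $\dr_x(nu)=(\dr_x n)u+n\,\dr_x u$ the second term is indeed in $H^2$, but the first is only in $H^1$: the bound $\dbn{(\dr_x n)u}_{H^2}\leqslant C\dbn{n}_{H^2}\dbn{u}_{H^3}$ you invoke would require $\dr_x n\in H^2$, i.e. $n\in H^3$ (take $u$ a cutoff equal to $1$ on the support of $n$, so that $(\dr_x n)u=\dr_x n$). There is no way to ``distribute derivatives so that the highest-order factor is paired with a low-order factor'': in $\dr_x^3(nu)$ the term $(\dr_x^3 n)u$ is unavoidable, integration by parts only moves the excess derivative onto $u$ (producing $\dr_x^4 u$), and neither $U(t)$ nor $S_\alpha(t)$ smooths. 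This one-derivative loss is structural for Zakharov ($n$ always lives one derivative below what a naive Duhamel/energy estimate for $u$ needs), so your schematic Gronwall inequality for $\mathcal{N}(t)=\dbn{u}_{H^3}+\dbn{n}_{H^2}+\dbn{\mu}_{H^1\cap\dot H^{-1}}$, with prefactor depending only on the $H^2\times H^1$ norms, cannot be derived by purely spatial estimates: the Schr\"odinger part at the $H^3$ level demands $\dbn{n}_{H^3}$, which is never available, making the scheme circular rather than closable. This is precisely why Proposition \ref{solutionH2} was proved by Kato's method in the spaces $Z_T$, $Y_T$, $H_T$, which carry $\dr_t$-information: one estimates the time-differentiated equation (where $\dr_t(nu)=\mu u+n\dr_t u$ is controlled, since $\mu$ is a dynamical variable) and recovers the top two space derivatives from the elliptic relation $\dr_x^2u=nu-i\dr_t u$.

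The paper's actual proof keeps exactly this structure: it differentiates the system \eqref{equationdetaillee} in $x$, obtaining a \emph{linear} nonhomogeneous system for $(\dr_x u,\dr_x n,\dr_x\mu)$ with coefficients $(u,n)$, and re-runs the Kato fixed point for that system in $Z_T\times Y_T\times H_T$, with a local time depending only on $\max(\dbn{u}_{Z_{T_0}},\dbn{n}_{Y_{T_0}})$; since these lower norms are bounded on any $[0,T_0]$ with $T_0<\tau(\omega)$, the step iterates up to any time below $\tau$. Your plan shares the correct guiding principle (higher regularity should propagate on the whole interval because its growth is governed linearly by the already-controlled lower norms), but by discarding the fixed point and the time-derivative spaces you lose the only mechanism that beats the derivative loss. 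A secondary, more standard issue: even granting the estimates, a Gronwall bound on $\mathcal{N}(t)$ is a priori only formal, since one must first know $\mathcal{N}$ is finite and the computations legitimate; this requires a local existence statement at the $H^3\times H^2\times(H^1\cap\dot H^{-1})$ level (or an approximation argument) together with uniqueness in the lower space --- which is again what the paper's fixed point on the differentiated system supplies, and what your proposal explicitly declines to provide. The wave part of your argument (contractivity of $S_\alpha$ on the shifted scale and the upgrade of the stochastic convolution via \cite[Theorem 6.10]{DPZ14} under $\phi\in\mathcal{L}_2(L^2,H^1\cap\dot H^{-1})$) is fine as stated.
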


\begin{proof}
The proof is obtained by classical arguments: differentiating the system \eqref{equationdetaillee} with respect to $x$ gives a linear (non homogeneous)
equation for $(\dr_x u,\dr_x n,\dr_x \mu)$. Using then the same kind of fixed point arguments as in the proof of Proposition \ref{solutionH2}, it is easy
to prove that this later system has a unique solution in $Z_T\times Y_T \times H_T$, for $T$ sufficiently small, depending only on 
$\max(\|u\|_{Z_{T_0}}, \|n\|_{Y_{T_0}})$, where $T_0<\tau(\omega)$ is fixed, so that the argument may be iterated up to any time less 
than $\tau(\omega)$.
\end{proof}

\subsection{Global Well-Posedness} \label{global}

In this section,  we follow the arguments in \cite{SuSu79} to get a priori estimates showing that the solution is global.  We are still considering 
the system \eqref{equationdetaillee},
but now we mention the dependence  on $\varepsilon$ in the estimates. We will see in particular that those are not uniform in $\varepsilon$.
We prove below the following proposition.

\begin{prop}\label{GWP}
Let $\alpha\geqslant 0$, $T>0$ and let $\phi\in\mathcal{L}^2(L^2,H^1\cap\dot{H}^{-1})$.
For initial data $u_0\in H^3$, $n_0\in H^2$ and $n_1\in H^1\cap\dot{H}^{-1}$,  there exists almost surely a unique solution $(u,n,\mu)\in L^\infty([0,T], H^3\times H^2\times H^1\cap \dot{H}^{-1})$ for the system \eqref{equationdetaillee}.
\end{prop}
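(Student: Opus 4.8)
The plan is to reduce global existence to an a priori bound, on each fixed interval $[0,T]$, for the quantity $\dbn{u(t)}_{H^2}^2+\dbn{n(t)}_{H^1}^2$. Indeed, Proposition~\ref{solutionH2} already provides a unique local solution up to a stopping time $\tau$ with the blow-up alternative $\tau=+\infty$ or $\max(\dbn{u(t)}_{H^2},\dbn{n(t)}_{H^1})\to+\infty$ as $t\to\tau$, and Proposition~\ref{solutionH3} shows that on its whole interval of existence this solution is in fact in $H^3\times H^2\times H^1\cap\dot H^{-1}$. Hence it suffices to show that $\dbn{u}_{H^2}+\dbn{n}_{H^1}$ stays finite up to time $T$ almost surely. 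To make this rigorous in the stochastic framework I would introduce the stopping times $\sigma_R=\inf\{t\geqslant 0:\dbn{u(t)}_{H^2}^2+\dbn{n(t)}_{H^1}^2\geqslant R\}\wedge\tau$, establish energy estimates stopped at $\sigma_R$ whose bounds are uniform in $R$ (but depending on $T$, $\phi$, $\varepsilon$ and the data), and conclude via Chebyshev's inequality that $\PP(\sigma_R\leqslant T)\to 0$ as $R\to+\infty$, so that $\tau>T$ a.s.; since $T$ is arbitrary this gives the claim.

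Two conservation laws provide the first estimates. First, because the potential $n$ is real-valued and the noise enters only the equation for $\mu$, the Schr\"odinger mass is pathwise conserved, $\dbn{u(t)}_{L^2}=\dbn{u_0}_{L^2}$. Second, I would apply the It\^o formula to the energy $H(u,n)$. Since $u$ and $n$ are of bounded variation in time (only $\mu$ carries a martingale part), the sole It\^o correction comes from the term $\frac{\varepsilon^2}{2}\dbn{\dr_x^{-1}\mu}_{L^2}^2$ and equals $\frac{1}{2\varepsilon^2}\dbn{\phi}_{\mathcal{L}_2(L^2,\dot H^{-1})}^2$, which is finite for fixed $\varepsilon$ but of order $\varepsilon^{-2}$, hence not uniform in $\varepsilon$. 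The damping contributes the favourable term $-\alpha\varepsilon\dbn{\dr_x^{-1}\mu}_{L^2}^2\leqslant 0$, and the remaining stochastic integral is a martingale handled by the Burkholder--Davis--Gundy inequality. Combining this identity with mass conservation and the one-dimensional Gagliardo--Nirenberg inequality $\dbn{u}_{L^4}^4\leqslant C\dbn{u}_{L^2}^3\dbn{\dr_xu}_{L^2}$ to absorb the indefinite coupling $\int_\ER n|u|^2\,dx$, I obtain a bound on $\Es\big[\sup_{[0,T]}\big(\dbn{u}_{H^1}^2+\dbn{n}_{L^2}^2+\varepsilon^2\dbn{\dr_x^{-1}\mu}_{L^2}^2\big)\big]$.

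The core of the argument is the second-order estimate upgrading this to control of $\dbn{u}_{H^2}$ and $\dbn{n}_{H^1}$. Here I would use the equation itself, $\dr_x^2u=-i\dr_tu-nu$ with $\dbn{nu}_{L^2}\leqslant C\dbn{n}_{L^2}\dbn{u}_{H^1}$, which makes $\dbn{u}_{H^2}$ comparable to $\dbn{\dr_tu}_{L^2}$ modulo first-order terms, and set up a second-order energy functional modelled on the conserved Zakharov energy, of the form
$$\mathcal E_1=\dbn{\dr_x^2u}_{L^2}^2+\tfrac12\dbn{\dr_xn}_{L^2}^2+\tfrac{\varepsilon^2}{2}\dbn{\mu}_{L^2}^2+\int_\ER(\dr_xn)\,\dr_x(|u|^2)\,dx,$$
which, modulo the first-order bounds and Gagliardo--Nirenberg, is coercive for $\dbn{\dr_x^2u}_{L^2}^2+\dbn{\dr_xn}_{L^2}^2+\varepsilon^2\dbn{\mu}_{L^2}^2$. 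Applying the It\^o formula to $\mathcal E_1$ produces again a dissipative damping term, an It\^o correction of size $\varepsilon^{-2}\dbn{\phi}_{\mathcal{L}_2(L^2)}^2$, a BDG-controllable martingale, and a family of nonlinear remainders. Feeding in the first-order bounds and using Gagliardo--Nirenberg to estimate the quartic-type remainders by $C\big(1+(\text{first-order quantities})\big)\,\mathcal E_1$, I would close the estimate by Gronwall's lemma, which yields the desired uniform-in-$R$ bound and completes the stopping-time argument.

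The main obstacle is precisely this second-order closure, and it is here that the special structure of the Zakharov nonlinearity is essential. The coupling is genuinely two-sided: an a priori bound on $u$ in $H^2$ requires $n$ in $H^1$ through the term $\dr_x^2(nu)$, while a bound on $n$ in $H^1$ requires $u$ in $H^2$ through the forcing $\dr_x^2(|u|^2)$. Estimating each norm separately therefore produces a Riccati-type differential inequality $\mathcal E_1'\lesssim\mathcal E_1^2$, which would only give local-in-time control. Avoiding this forces one to exploit the cancellations built into the energy structure of \eqref{equationdetaillee} at the differentiated level (the reason for the precise choice of coupling term in $\mathcal E_1$), exactly as in \cite{SuSu79}, so that the top-order quadratic contributions cancel and only Gronwall-integrable remainders survive; the remaining, purely stochastic, difficulty is the careful but routine bookkeeping of the It\^o corrections and martingale terms, all of order $\varepsilon^{-2}$ and hence non-uniform in $\varepsilon$.
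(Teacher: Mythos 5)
Your scaffolding is sound: reducing global existence to an a priori bound in $H^2\times H^1$ via the blow-up alternative of Proposition \ref{solutionH2}, invoking Proposition \ref{solutionH3} for propagation of the extra regularity, and using mass conservation plus a first-order energy bound. (The paper does the first-order step differently: it subtracts the stochastic convolution $Z^\varepsilon$ of \eqref{Zepsilon}, so that $(u,m)$ solves the pathwise system \eqref{systemetranslate} and no It\^o formula, BDG inequality or stopping-time/Chebyshev argument is needed at all — all estimates hold almost surely with random constants; your It\^o variant with the $\frac{1}{2\varepsilon^2}\dbn{\phi}^2_{\mathcal{L}_2(L^2,\dot{H}^{-1})}$ correction is a workable alternative at fixed $\varepsilon$.) The genuine gap is in the second-order closure. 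Differentiating in \emph{space} and using your functional $\mathcal{E}_1$ does not produce the cancellation you attribute to \cite{SuSu79}. Carrying out the computation on the deterministic part: testing the $x$-differentiated Schr\"odinger equation with $\dr_t\dr_x\bar{u}$ gives $\dr_t\dbn{\dr_x^2u}_{L^2}^2=-\int_\ER n\,\dr_t\vert\dr_xu\vert^2dx-2\,Re\int_\ER \dr_xn\,u\,\dr_t\dr_x\bar{u}\,dx$; the wave part gives $\frac12\dr_t\big(\dbn{\dr_xn}_{L^2}^2+\varepsilon^2\dbn{\mu}_{L^2}^2\big)=-\int_\ER\dr_x\mu\,\dr_x(\vert u\vert^2)dx-\alpha\varepsilon\dbn{\mu}_{L^2}^2$; and $\dr_t\int_\ER\dr_xn\,\dr_x(\vert u\vert^2)dx$ cancels both $\int\dr_x\mu\,\dr_x(\vert u\vert^2)$ and the term $2\,Re\int\dr_xn\,\bar{u}\,\dr_t\dr_xu$. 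After absorbing $-\int n\,\dr_t\vert\dr_xu\vert^2$ into the energy (adding $\int n\vert\dr_xu\vert^2$), what survives, upon substituting $\dr_tu=i\dr_x^2u-inu$, is
\begin{equation*}
\int_\ER\mu\,\vert\dr_xu\vert^2dx \;-\;2\int_\ER\dr_xn\,Im\big(\dr_x\bar{u}\,\dr_x^2u\big)dx\;+\;2\int_\ER n\,\dr_xn\,Im(\dr_x\bar{u}\,u)\,dx.
\end{equation*}
The first and third terms are harmless for fixed $\varepsilon$, but the middle term admits no further cancellation — integrating by parts merely reproduces it, since $Im(\dr_x\bar{u}\,\dr_xu)=0$ — and with only the first-order bounds in hand it is estimated by $\dbn{\dr_xn}_{L^2}\dbn{\dr_x^2u}_{L^2}\dbn{\dr_xu}_{L^\infty}\lesssim \mathcal{E}_1^{5/4}$, i.e.\ exactly the superlinear Riccati inequality you claimed the choice of coupling avoids.

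This is precisely why \cite{SuSu79}, and the paper following it, differentiate the system in \emph{time} rather than in space: the time-differentiated system inherits the energy structure, yielding the identities \eqref{eq219}--\eqref{eq221} for $\dbn{\varepsilon\dr_tV}_{L^2}^2+\dbn{\dr_tm}_{L^2}^2$, $\dbn{\dr_tu}_{L^2}^2$ and $\dbn{\dr_t\dr_xu}_{L^2}^2$, whose combination \eqref{grosseestimation} contains only cubic remainders in which the dangerous factor is $\dr_tm=-\dr_xV$; since $\dbn{V}_{L^2}$ is already controlled by the first-order bound (with an $\varepsilon^{-1}$ loss, harmless here), an integration by parts shifts the derivative onto controlled quantities and a \emph{linear} Gr\"onwall inequality \eqref{boundglobalwp} closes. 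Spatial regularity is then recovered a posteriori from the equations themselves: $\dbn{m}_{H^1}$ from \eqref{eqV}, and $u\in L^\infty(0,T;H^3)$ from $\dr_x^2u=-i\dr_tu+(m+Z^\varepsilon)u$. As written, your second-order estimate only gives local-in-time control, which the local theory already provides; to complete the proof you need this time-derivative mechanism (or a genuinely new cancellation for the term $\int\dr_xn\,Im(\dr_x\bar{u}\,\dr_x^2u)\,dx$), not a spatially differentiated energy.
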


\begin{proof}
In order to get our estimates,  we decompose $n=m+ Z^\varepsilon$ where $Z^\varepsilon$  satisfies 
\begin{equation}\label{Zepsilon}
\begin{split}
\varepsilon^2d(\dr_tZ^\varepsilon)&=(-\alpha\varepsilon\dr_tZ^\varepsilon+\dr_x^2Z^\varepsilon)dt +\phi dW_t,\\
Z^\varepsilon(0)&= \dr_tZ^\varepsilon(0)=0.
\end{split}
\end{equation} 
Note that $(Z^\varepsilon, \dr_t Z^\vep)$ has a.s. continuous trajectories with values in $H^2\times H^1\cap\dot{H}^{-1}$ (again by 
\cite[Theorem 6.10]{DPZ14}). Also, $(u,n,\mu)$ is solution of the system \eqref{equationdetaillee} if and only if $(u,m)$ is solution of the system
\eqref{systemetranslate}.
Thus, under the assumptions of Proposition \ref{GWP}, the solution $(u,m)$ given by Proposition \ref{solutionH3}  satisfies a.s. 
$\dr_t m \in C([0,\tau);H^1\cap \dot{H}^{-1})$, and we may define 
\begin{equation}\label{definitionV}
V=-\dr_x^{-1}\dr_tm \in C([0,\tau);H^2), \; \text{a.s.}
\end{equation}
for which
\begin{equation}\label{eqV}
\varepsilon^2\dr_tV+\alpha\varepsilon V+\dr_x(m+\vert u\vert^2)=0.
\end{equation} 

Now, let us define, with the above notations, the mass and energy
	\begin{equation}\label{defNH}
	N(u)=\dbn{u}_{L^2}^2 \text{ and }\quad H(u,m)=\dbn{\dr_xu}_{L^2}^2 + \frac{1}{2}(\dbn{m}_{L^2}^2 + \dbn{\varepsilon V}_{L^2}^2) + \int_\ER m\vert u\vert^2dx.
	\end{equation}
The evolution of those quantities allows us to get first the following estimate.

\begin{lemma}\label{GronwallH}
Under the above assumptions, the local solution $(u,m)$ of the system \eqref{systemetranslate} satisfies: for any $T>0$, there exists a random 
constant $C_\varepsilon(\omega)>0$ depending only on $T$, $\|u_0\|_{H^1}$, $\|n_0\|_{L^2}$ and $\|Z^\vep\|_{L^\infty(0,T;H^2)}$, such that
\begin{equation}\label{estimationenergie}
\frac{1}{2}\dbn{\dr_xu}_{L^2}^2 + \frac{1}{4}\dbn{m}_{L^2}^2 + \frac{1}{2}\dbn{\varepsilon V}_{L^2}^2 \leqslant C_\varepsilon, \; \text{ a.s.}
\end{equation}
%with $C=\vert H_0\vert +2N$ where $H_0=H(u_0,m_0)$.
\end{lemma}

\begin{proof}
Using the fact that $m$ is real valued, it is easy to see (taking the inner product in $L^2(\ER;\mathbb{C})$ of the first equation of 
\eqref{systemetranslate} with $iu$)
that the mass $N(u)$ is conserved. As for the energy, under the above regularity assumptions, we may take the inner product of the first
equation of \eqref{systemetranslate} with $\dr_t u$, and integrate by parts to get
\begin{equation}\label{en1}
\dr_t\dbn{\dr_xu}_{L^2}^2 +\int_\ER (m\dr_t\vert u\vert^2 +  Z^\varepsilon\dr_t\vert u\vert^2 )\, dx=0.
\end{equation}
On the other hand, taking the inner product of equation \eqref{eqV} with $V$, integrating by parts the term $(\dr_x (m+|u|^2),V)$ and adding the
result to equation \eqref{en1} gives
\begin{equation}\label{H}
%\dr_tN=0, \qquad
\dr_tH(u,m) + \int_\ER Z^\varepsilon\dr_t\vert u\vert^2dx =-\alpha\varepsilon\dbn{V}_{L^2}^2.
\end{equation}
Note that Young and Galiardo-Nirenberg inequalities, together with the conservation of mass yield
$$
\Big\vert \int_\ER m\vert u\vert^2dx\Big\vert \leqslant \frac{1}{4}\dbn{m}_{L^2}^2+ 4N(u_0)^\frac{3}{2}\dbn{\dr_xu}_{L^2}
$$
(see \cite[Lemma 2]{SuSu79}), while $\dr_t|u|^2=-2 Im (\bar u \dr_x^2 u)$ by the first equation of \eqref{systemetranslate}, so that
integrating by parts,
$$
\int_\ER Z^\vep \dr_t |u|^2 \, dx = \int_\ER \dr_xZ^\varepsilon Im (\bar{u}\dr_xu) \, dx 
$$ 
and the later term is bounded by $\|\dr_x Z^\vep \|_{L^\infty} N(u_0)+ \|\dr_x u\|_{L^2}^2$. The conclusion of Lemma \ref{GronwallH} is then obtained by
integrating \eqref{H} between $0$ and $t$, using the above bounds and Gr\"onwall Lemma.
\end{proof}

The next equalities are very similar to those stated in \cite{SuSu79}, and we only sketch their proofs.  

\begin{lemma} \label{L.5.1.4}
Under the above assumptions, the local solution $(u,m)$ of the system \eqref{systemetranslate} satisfies
\begin{align}
\label{eq219}
& \dr_t\big(\dbn{\varepsilon \dr_t V}_{L^2}^2 + \dbn{\dr_tm}_{L^2}^2\big) + 2 \int_\ER\dr_t^2m \,\dr_t\vert u\vert^2dx+2\alpha\varepsilon\dbn{\dr_tV}_{L^2}^2=0, \\
\label{eq220}
& \dr_t\dbn{\dr_tu}_{L^2}^2= 2  Im \int_\ER u\dr_t\bar{u}\,(\dr_tm + \dr_tZ^\varepsilon) \,dx, \\
\label{eq221}
& -\dr_t\dbn{\dr_t \dr_x u}_{L^2}^2 = \int_\ER (m+ Z^\varepsilon)\dr_t\vert \dr_tu\vert^2dx + \int_\ER\dr_t(m+ Z^\varepsilon)\big( \dr_t^2\vert u\vert^2-2\vert\dr_tu\vert^2\big)dx.
\end{align} 
\end{lemma}
\begin{proof}
Equality \eqref{eq219} is obtained by taking the time derivative of \eqref{eqV}, using integration by parts and the fact that $\dr_x\dr_tV=-\dr_t^2m$.
As for \eqref{eq220}, it suffices to differentiate the first equation of \eqref{systemetranslate} in time, take the scalar product of the resulting equation
with $\dr_t u$, and integrate by parts.
Finally, \eqref{eq221} is obtained by differentiating in time the equation $\dr_x^2 u =(m+Z^\vep)u -i\dr_t u$, taking the inner product with $\dr_t^2 u$,
integrating by parts, and using the fact that $Re (u\dr_t^2 \bar u)=\frac12(\dr_t^2|u|^2-2|\dr_tu|^2)$.
\end{proof}

Now, by gathering \eqref{eq219}-\eqref{eq221} and integrating in time we obtain: 
\begin{equation} \label{grosseestimation}
\begin{split}
  2\dbn{\dr_tu}_{H^1}^2  &+ \dbn{\varepsilon\dr_tV}_{L^2}^2 + \dbn{\dr_tm}_{L^2}^2 + 2\int_\ER (\dr_tm \, \dr_t\vert u\vert^2 +m\vert\dr_tu\vert^2) \, dx
 +2\alpha\varepsilon\dbn{V}_{L^2([0,t],L^2)}^2 \\ 
= & \; C(u_0,n_0,n_1, Z^\vep(0)) +\int_0^t\!\!\int_\ER \big(6\dr_tm\,\vert\dr_tu\vert^2 + 4 Im (u\, \dr_t\bar{u}\,\dr_tm)\big) dxds \\
&+4 Im \int_0^t  \!\!\int_\ER u \, \dr_t\bar{u} \,\dr_tZ^\varepsilon dxds - 2\int_\ER Z^\varepsilon\vert\dr_tu\vert^2 dx\\
&-2\int_0^t\!\!\int_\ER \dr_tZ^\varepsilon\big(\dr_t^2\vert u\vert^2-3\vert\dr_tu\vert^2\big)dxds. \\
\end{split}
\end{equation}
In what follows, $
C$ denotes various constants that may depend on $\omega, T, u_0, n_0, n_1$ and $\vep$.
Note that the first term on the second line of \eqref{grosseestimation} is finite, thanks to the assumptions on the initial data. As for the second term, 
using the fact that $\dr_tm=-\dr_xV$, integrating by parts and using Lemma \ref{GronwallH}, it is bounded by $C\int_0^t \|\dr_t u\|_{H^1}^2 ds$.
The last term of the second line is in turn bounded by $C\int_0^t (\|\dr_t u\|_{H^1}^2 +\|\dr_tm\|_{L^2}^2) \,ds$, by conservation of mass.

We now estimate the terms in equation \eqref{grosseestimation} involving the stochastic convolution $Z^\varepsilon$.
Using the fact that $(Z^\varepsilon,\dr_tZ^\varepsilon)\in L^\infty([0,T],H^2) \times L^\infty([0,T],H^1)$ a.s.,
the first term involving $Z^\vep$ is bounded by $C(1+\int_0^t \|\dr_t u\|_{L^2}^2 ds)$.
For the third term involving $Z^\vep$, we use the fact that  $\dr_t |u|^2=-2 Im (\bar u \,\dr_x^2 u)$, differentiate in time, and integrate the term
by parts to get
$$
\int_0^t \int_\ER \dr_t Z^\vep \dr_t^2|u|^2ds \, ds \leqslant 2\int_0^t \|u\|_{H^1} \|\dr_t Z^\vep\|_{H^1} \|\dr_t u\|_{H^1} ds.
$$
Lemma \ref{GronwallH} allows then to bound also the third term involving $Z^\vep$ in \eqref{grosseestimation} by 
$C_\varepsilon(1+\int_0^t \|\dr_t u\|_{L^2}^2 ds)$.

There remains to bound the terms in  \eqref{grosseestimation} that are not integrated in time. Note that, again by  the relation $\dr_tm=-\dr_x V$,
and integrating by parts,
$$
\int_\ER \dr_t m \,\dr_t|u|^2 dx \leqslant \|V\|_{L^2} \|u\|_{H^1} \|\dr_t u\|_{H^1} 
\leqslant \frac12 \|\dr_t u\|_{H^1}^2 +C_\varepsilon.
$$
On the other hand, 
\eqref{eq220} and Lemma \ref{GronwallH} easily imply
$$
\|\dr_t u\|_{L^2}^2 \leqslant C_\varepsilon (1+\int_0^t \|\dr_t u\|_{L^2}^2 +\|\dr_t m\|_{L^2}^2 ds),
$$
from which we deduce 
$$
\int_\ER m|\dr_t u|^2 dx \leqslant \|m\|_{L^2}\|\dr_t u\|_{L^2} \|\dr_t u\|_{H^1} 
\leqslant \frac12 \|\dr_t u\|_{H^1}  + C( 1+\int_0^t \|\dr_t u\|_{L^2}^2 +\|\dr_t m\|_{L^2}^2 ds).
$$

Gathering the above estimates allows to conclude that 
\begin{equation}\label{boundglobalwp}
\dbn{\dr_tu}_{H^1}^2 + \dbn{\varepsilon\dr_tV}_{L^2}^2 + \dbn{\dr_t m}_{L^2}^2 \leqslant C_\varepsilon(1+ \int_0^t\dbn{\dr_tu(s)}_{H^1}^2 + \dbn{\dr_tm(s)}_{L^2}^2ds),
\end{equation}
so that by Gr\"onwall Lemma, the quantities $\dbn{\dr_tu}_{H^1}, \dbn{\dr_tm}_{L^2}$ and $\dbn{\varepsilon\dr_t V}_{L^2}$ are a.s. bounded 
on $[0,T]$.

Next, equation \eqref{eqV} and Lemma \ref{GronwallH} imply $\|m\|_{H^1}\leqslant C_\varepsilon$, while the first equation of \eqref{systemetranslate}
and Lemma  \ref{GronwallH} give a bound on $\|\dr_x^2 u\|_{H^1}$, showing that $u \in L^\infty(0,T;H^3)$, a.s.
The conclusion of the proof of Proposition \ref{GWP} follows then from Proposition \ref{solutionH3}, and the blow up criterion in Proposition 
\ref{solutionH2}.
\end{proof}

\section{The driving process and its generator} \label{driving}

It is clear that the estimates obtained in Subsection \ref{global} are not uniform in $\vep$ (note for instance that
 $Z^\varepsilon$ is of order $\varepsilon^{-1/2}$, as can be seen by computing $\mathbb E(\|Z^\epsilon(t)\|^2_{L^2})$). 
 In order to get a uniform bound in $\vep$ on the solutions of the system \eqref{systemetranslate} allowing us to prove Theorem \ref{thmfinal},
we first remark that this latter system is related to a diffusion-approximation problem.
Indeed, let us denote $(z^\varepsilon(t),\zeta^\varepsilon(t))=(\varepsilon^{1/2} Z^\varepsilon(t), \varepsilon^{3/2} \dr_t Z^\varepsilon(t))$, where we recall that $Z^\vep$ satisfies \eqref{eqZeps}. 
Then $(z^\vep,\zeta^\vep)$ satisfies the equation
\begin{equation}\label{equationzeps}
\left\{\begin{array}{ll}
\displaystyle \dr_t z^\varepsilon=\frac1\varepsilon \zeta^\varepsilon,  \\
\displaystyle d\zeta^\varepsilon=\frac1\varepsilon(-\alpha\zeta^\varepsilon+\dr_x^2{ z}^\varepsilon)dt +\frac1{\varepsilon^{1/2}}\phi d{W}_t,
\end{array}\right. 
\end{equation}
with $z^\vep(0)=\zeta^\vep(0)=0$.

Moreover, the infinitesimal generator of $(z^\vep,\zeta^\vep)$ is given by $\frac{1}{\vep} \mathcal{M}$, where $\mathcal{M}$ is the generator 
of the $\vep$-independent process $(z,\zeta)$ such that
\begin{equation}\label{equationz2}
\left\{\begin{array}{ll}
\displaystyle \dr_t z= \zeta,  \\
\displaystyle d\zeta=(-\alpha\zeta+\dr_x^2{ z})dt +\phi d{W}_t.
\end{array}\right. 
\end{equation}
Note that the transition semigroup associated with \eqref{equationz2} possesses a unique (Gaussian) invariant measure $\nu$
(see \cite[Chapter 11]{DPZ14}), where uniqueness
follows from the decay of $S_\alpha$ for $\alpha>0$. Note that $\nu$ is also invariant for the system \eqref{equationzeps}.
%, and this justifies the existence of the stationary process $(z^\vep,\zeta^\vep)$.

In the next section, we will prove a uniform bound on a modified energy by using the perturbed test function method, and this bound will allow us to 
prove the tightness of approximating sequences (as $\vep$ tends to $0$) in Section \ref{tightness}. We first gather in this section some results about 
the driving process $(z^\vep,\zeta^\vep)$ and the generator $\mathcal{M}$.

Choosing an initial condition of the system \eqref{equationz2} fo the form:
\begin{equation}\label{donnee_init_z}
\begin{pmatrix}
z_0 \\\zeta_0
\end{pmatrix}  = \int_{-\infty}^0 S_\alpha(-s)\begin{pmatrix} 0\\\phi dW_s
\end{pmatrix}
\end{equation}	  
allows to get a stationary solution of this system. 
The regularity of $(z_0,\zeta_0)$ depends on the regularity of $\phi$ as a Hilbert-Schmidt operator,  as can be seen in Section \ref{salpha}. %Thus the regularity of $(m_0,m_1)$ only depends on the regularity of  $(n_0,n_1)$ and the regularity of $\phi$ as a Hilbert-Schmidt operator.
Then for any continuous bounded function $\varphi$ on $H^2\times H^1\cap\dot{H}^{-1}$ we have an explicit expression for the expectation with respect to the invariant measure $\nu$: 
\begin{align*}
\mathbb{E}_\nu[\varphi(z,\zeta)]
&= \mathbb{E}\left[\varphi(z_0,\zeta_0)\right]=\mathbb{E}\left[\varphi\left( \int_{0}^\infty S_\alpha(s)\begin{pmatrix} 0\\\phi dW_s
\end{pmatrix}_1 ,   \int_{0}^\infty S_\alpha(s)\begin{pmatrix} 0\\\phi dW_s\end{pmatrix}_2\right)\right].
\end{align*}

In order to justify the computations below, we will need to take $\phi\in \mathcal{L}_2(H^3\cap\dot{H}^{-4})$,  which guarantees that $(z^\varepsilon,\zeta^\varepsilon)\in C([0,T];(H^3\cap\dot{H}^{-3})^2)$ almost surely, for any $T>0$, 
since the operator $S_\alpha(t)$ commutes with the operator $\dr_x$. We control the growth of $(z^\varepsilon,\zeta^\varepsilon)$ thanks to the following Proposition (see also \cite{BDT2023}).
\begin{prop}\label{propprocstationnaire}
For any $T>0$ and any $\delta>0$:  
$$\lim_{\vep \to 0} \PP\Big(\sup_{[0,T]}(\dbn{z^\varepsilon(t)}_{H^3\cap\dot{H}^{-3}} +\dbn{\zeta^\varepsilon(t)}_{H^3\cap\dot{H}^{-3}})\geqslant \varepsilon^{-\delta}\Big)=0.$$ 
\end{prop}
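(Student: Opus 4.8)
The plan is to establish the probability bound in Proposition~\ref{propprocstationnaire} by reducing the supremum estimate to a moment bound on $(z^\vep,\zeta^\vep)$ via a Markov/Chebyshev-type inequality, combined with a continuity (Kolmogorov) argument. The key structural observation is that $(z^\vep,\zeta^\vep)$ is the Ornstein--Uhlenbeck process \eqref{equationzeps} whose invariant measure $\nu$ is Gaussian, and whose generator is $\frac1\vep\MM$. Because the dynamics only rescale time by $1/\vep$ relative to the $\vep$-independent process $(z,\zeta)$ of \eqref{equationz2}, the one-time law of $(z^\vep(t),\zeta^\vep(t))$ starting from the stationary initial condition \eqref{donnee_init_z} is exactly $\nu$ for every $t$, so that $\Es\|z^\vep(t)\|^p_{H^3\cap\dot{H}^{-3}}$ and the analogous $\zeta^\vep$ moment are finite constants \emph{independent of $\vep$ and $t$}, controlled by $\phi\in\mathcal{L}_2(H^3\cap\dot{H}^{-4})$ through the explicit stationary representation stated just above the proposition.

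First I would fix the stationary solution and compute, for each integer $p\geqslant 2$, a uniform bound
$$
\sup_{\vep>0}\ \sup_{t\in[0,T]}\ \Es\Big[\dbn{z^\vep(t)}_{H^3\cap\dot{H}^{-3}}^p + \dbn{\zeta^\vep(t)}_{H^3\cap\dot{H}^{-3}}^p\Big]\leqslant C_p,
$$
using Gaussianity of $\nu$ (so all moments are finite and comparable to the second moment) and the fact that $S_\alpha(t)$ commutes with $\dr_x$ together with the decay of $S_\alpha$ for $\alpha>0$, which makes the stochastic integral in \eqref{donnee_init_z} converge in the relevant Sobolev norm. Next I would promote this pointwise-in-time moment bound to a bound on the supremum over $[0,T]$. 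The cleanest route is a Kolmogorov continuity estimate: bound $\Es\|(z^\vep(t),\zeta^\vep(t))-(z^\vep(s),\zeta^\vep(s))\|^p$ by $C_p|t-s|^{\theta p}$ for some $\theta>0$ and $p$ large, which yields a bound on $\Es\sup_{[0,T]}\|(z^\vep,\zeta^\vep)\|^{p'}$ that is uniform in $\vep$. The danger here is that the time increment might blow up as $\vep\to0$ because of the $1/\vep$ time rescaling; this is precisely why the modulus of continuity will carry negative powers of $\vep$, and one must track these explicitly.

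With a uniform-in-$\vep$ (or mildly $\vep$-dependent) bound on $\Es\sup_{[0,T]}(\|z^\vep\|_{H^3\cap\dot{H}^{-3}}+\|\zeta^\vep\|_{H^3\cap\dot{H}^{-3}})^p$ in hand, Chebyshev's inequality gives
$$
\PP\Big(\sup_{[0,T]}(\dbn{z^\vep(t)}_{H^3\cap\dot{H}^{-3}}+\dbn{\zeta^\vep(t)}_{H^3\cap\dot{H}^{-3}})\geqslant \vep^{-\delta}\Big)\leqslant \vep^{\delta p}\,\Es\sup_{[0,T]}(\cdots)^p,
$$
and for any fixed $\delta>0$ I would choose $p$ large enough that $\vep^{\delta p}$ beats any polynomial blow-up in $\vep$ coming from the supremum moment, sending the right-hand side to $0$ as $\vep\to0$. \textbf{The main obstacle} I anticipate is controlling the temporal modulus of continuity uniformly enough in $\vep$: the fast $1/\vep$ time scale means increments over a fixed macroscopic interval are large, so the Kolmogorov estimate must be applied on a scale adapted to $\vep$ (e.g.\ partitioning $[0,T]$ into $O(\vep^{-1})$ subintervals and using a union bound, or directly estimating the maximal process). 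One reconciles this with the target rate $\vep^{-\delta}$ for arbitrarily small $\delta$ by exploiting that the moment bounds hold for \emph{all} $p$, so any fixed negative power of $\vep$ generated by the supremum can be absorbed by taking $p$ sufficiently large relative to $\delta$. This is exactly the kind of argument carried out in \cite{BDT2023}, to which the statement refers, so I would follow that template for the increment estimates.
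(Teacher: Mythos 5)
Your proposal is correct in substance and rests on the same key reduction as the paper: pass to the $\vep$-independent process via the equality in law $(z^\vep(t),\zeta^\vep(t))\overset{d}{=}(z(t/\vep),\zeta(t/\vep))$, exploit Gaussianity to obtain moments of every order, and control the supremum over the long interval $[0,T/\vep]$ by cutting it into unit intervals. Where you differ is the finishing device. The paper bounds $\Es[\eta_k^2]$ uniformly in $k$ for $\eta_k=\sup_{t\in[k,k+1]}\big(\dbn{z(t)}_{H^3\cap\dot{H}^{-3}}+\dbn{\zeta(t)}_{H^3\cap\dot{H}^{-3}}\big)$, upgrades to all moments $\gamma$ by Gaussianity, and then uses Markov plus Borel--Cantelli to produce the \emph{pathwise} growth bound \eqref{borne z zeta}, namely $\dbn{z(t)}+\dbn{\zeta(t)}\leqslant Z_1+|t|^{\delta'}Z_2$ a.s.\ with $\delta'<\delta$, from which the proposition follows immediately after rescaling. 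Your route --- a union bound over the $O(\vep^{-1})$ unit subintervals of $[0,T/\vep]$ combined with Chebyshev at a high moment --- is a quantified version of the same estimate and it does close: with $\gamma>1/\delta$ one gets $\PP\big(\sup\geqslant\vep^{-\delta}\big)\leqslant C_\gamma T\,\vep^{\delta\gamma-1}\to0$. What the paper's Borel--Cantelli variant buys, and yours does not, is the almost-sure sublinear growth estimate itself; this is not a throwaway byproduct, since \eqref{borne z zeta} is reused later to define the space $E$ in \eqref{defE} and in the proof of Proposition \ref{borneespKtilde} after the Skorohod change of probability space, so with your version those later steps would need a separate argument. Two smaller points to repair: the process in the statement starts from $z^\vep(0)=\zeta^\vep(0)=0$, not from the stationary datum \eqref{donnee_init_z}, so your claim that the one-time law is exactly $\nu$ does not literally apply; the moment bounds survive because the covariance of the zero-started solution is dominated by the stationary one (or one bounds $\Es[\eta_k^2]$ directly, as the paper does). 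Finally, the global Kolmogorov-continuity route you first float would indeed degrade as $\vep\to0$, exactly as you suspect; your own fallback --- localizing to unit intervals in the fast time variable and union-bounding, absorbing the resulting $\vep^{-1}$ by taking the moment order large relative to $\delta$ --- is the correct resolution, and is in effect what the paper does.
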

\begin{proof} Let $(z,\zeta)$ satisfy
\eqref{equationz2} with initial condition $z(0)=\zeta(0)=0$ and, for $k\in\EN$, denote by $\eta_k$ the process $$\eta_k=\sup_{t\in [k,k+1]}\left(\dbn{z(t)}_{H^3\cap\dot{H}^{-3}}+\dbn{\zeta(t)}_{H^3\cap\dot{H}^{-3}}\right).$$ 

It is easy to see that $\mathbb{E}(\eta_k^2)$ is bounded independently of $k$.
Since the process $(z,\zeta)$ is Gaussian, this implies that $\mathbb{E}[\eta_k^\gamma]$ is finite and is also bounded independtly  on $k$, for all $\gamma>0$. Thus, by Markov inequality for any $\delta'>0$, choosing $\gamma>1/\delta'$ we have $\sum_{k\in\EN}\PP(\eta_k>k^{\delta'})<\infty$, so that  Borel-Cantelli lemma implies the existence of  random variables $Z_1,Z_2$  such that
 \begin{equation}\label{borne z zeta}
\dbn{z(t)}_{H^3\cap\dot{H}^{-3}}+\dbn{\zeta(t)}_{H^3\cap\dot{H}^{-3}}\leqslant Z_1 + \vert t\vert^{\delta'} Z_2, \; \text{ a.s.}
\end{equation} 
Choosing $\delta'<\delta$, this estimate implies the statement of Proposition \ref{propprocstationnaire} for $(z(\frac{t}\varepsilon),\zeta(\frac{t}\varepsilon))$ and thus for $(z^\varepsilon,\zeta^\varepsilon)$, noticing that both processes have the same law.
\end{proof}
As a consequence of Proposition \ref{propprocstationnaire}, the stopping time 
\begin{equation}\label{deftaudelta}
\tau_\delta^\varepsilon = \inf \Big\{t: \dbn{z^\varepsilon(t)}_{H^3\cap\dot{H}^{-3}} + \dbn{\zeta^\varepsilon(t)}_{H^3\cap\dot{H}^{-3}} \geqslant \varepsilon^{-\delta}\Big\}
\end{equation}
converges to infinity in probability as $\varepsilon$ goes to $0$.
	
Let us focus on the infinitesimal generator $\MM$ of $(z,\zeta)$, which will be useful in the next sections where we use the Perturbed Test  Function method, for which we have to find the inverse of the generator applied to several quantities. 
We do not wish to specify the domain of the generator. For a Borel function $\varphi$, we write $\MM\varphi$ when $\varphi(z(t),\zeta(t))-\int_0^t\MM\varphi(z(s),\zeta(s))ds$ is a well defined quantity and 
is an integrable martingale. Similarly, we write $\MM^{-1}\varphi$ when $\MM^{-1}\varphi(z(t),\zeta(t))-\int_0^t\varphi(z(s),\zeta(s))ds$ is a well defined quantity and 
is an integrable martingale. 

For a function $f$, we try to find a function $\varphi$ such that $\MM\varphi=f$ (we write $\varphi=\MM^{-1}f$). Clearly, we need $\mathbb{E}_\nu[f(z,\zeta)]=0$, recalling that $\nu$ is the invariant measure
of $(z,\zeta)$. 
Given $\eta = (z,\zeta) $, we denote by $(z(t,\eta), \zeta(t,\eta))$ the solution of \eqref{equationz2} with initial data $\eta$ at time $0$. Then, since the process has all moments finite, it is classical that for any $\varphi$ measurable with polynomial growth, we have:
$$
\MM^{-1}\varphi(\eta)= -\mathbb E \int_0^\infty \varphi(z(t,\eta), \zeta(t,\eta))dt.
$$
The following results give properties on $\MM\varphi$ for some test functions $\varphi$ that will be used in the Perturbed Test Function method. 

\begin{lemma}\label{inverse1}
For $(z, \zeta)\in H^1\cap\dot{H}^{-2}\times L^2\cap \dot{H}^{-2}$  and $\phi\in\mathcal{L}_2(L^2,L^2\cap\dot{H}^{-2})$ we have 
\begin{equation}
\MM^{-1}z=(\dr_x^2)^{-1}\zeta +\alpha(\dr_x^2)^{-1}z
\end{equation}
where $\MM^{-1}z$ is a notation for $\MM^{-1}\pi_1(z,\zeta)$,  $\pi_1$ being the projection on the first coordinate. 
\end{lemma}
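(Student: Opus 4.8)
The plan is to guess the antiderivative explicitly and verify it against the defining martingale property, exploiting the fact that the candidate $\psi(z,\zeta):=(\dr_x^2)^{-1}\zeta+\alpha(\dr_x^2)^{-1}z$ is \emph{affine} in $(z,\zeta)$. Since $\psi$ has vanishing second differential there is no It\^o correction term, so one need not invoke the full infinite-dimensional It\^o formula: it suffices to apply the bounded linear operators $(\dr_x^2)^{-1}$ and $\alpha(\dr_x^2)^{-1}$ directly to the integrated (mild) form of the system \eqref{equationz2}.

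Concretely, I would start from the integrated equations
\begin{align*}
z(t)&=z(0)+\int_0^t\zeta(s)\,ds,\\
\zeta(t)&=\zeta(0)+\int_0^t\big(-\alpha\zeta(s)+\dr_x^2z(s)\big)\,ds+\int_0^t\phi\,dW(s),
\end{align*}
apply $(\dr_x^2)^{-1}$ to the second and $\alpha(\dr_x^2)^{-1}$ to the first (which commute with the time integrals and with the stochastic integral, being bounded between the relevant spaces), and add. The drift contributions $\alpha(\dr_x^2)^{-1}\zeta$ and $-\alpha(\dr_x^2)^{-1}\zeta$ cancel, while $(\dr_x^2)^{-1}\dr_x^2z=z$ survives, giving
$$
\psi(z(t),\zeta(t))=\psi(z(0),\zeta(0))+\int_0^t z(s)\,ds+\int_0^t(\dr_x^2)^{-1}\phi\,dW(s).
$$
Hence $\psi(z(t),\zeta(t))-\int_0^t z(s)\,ds$ equals an $\mathcal{F}_0$-measurable random variable plus a stochastic integral, so it is a martingale, which is exactly the statement $\MM^{-1}\pi_1=\psi$ in the sense fixed before the lemma.

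It then remains to check the analytic bookkeeping that makes each object legitimate; this is the only place where the regularity hypotheses enter, and it is routine. In Fourier variables $(\dr_x^2)^{-1}$ acts as multiplication by $-\xi^{-2}$, so $\dbn{(\dr_x^2)^{-1}\zeta}_{L^2}=\dbn{\zeta}_{\dot{H}^{-2}}$ and likewise for $z$; hence $(z,\zeta)\in H^1\cap\dot{H}^{-2}\times L^2\cap\dot{H}^{-2}$ guarantees that $\psi(z,\zeta)$ is a well-defined element of $L^2$ (with the $L^2$, resp. $H^1$, regularity yielding extra smoothness). For the stochastic integral, $\dbn{(\dr_x^2)^{-1}\phi e_k}_{L^2}=\dbn{\phi e_k}_{\dot{H}^{-2}}$ for each basis vector $e_k$, so that $\sum_k\dbn{(\dr_x^2)^{-1}\phi e_k}_{L^2}^2\leqslant\dbn{\phi}_{\mathcal{L}_2(L^2,L^2\cap\dot{H}^{-2})}^2<\infty$, i.e. $(\dr_x^2)^{-1}\phi\in\mathcal{L}_2(L^2,L^2)$, and $\int_0^t(\dr_x^2)^{-1}\phi\,dW$ is a genuine square-integrable $L^2$-valued martingale. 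I do not expect any serious obstacle: the whole content of the lemma is the drift cancellation above, and the hypotheses on $(z,\zeta)$ and on $\phi$ are tailored precisely so that every quantity in the computation lives in $L^2$ and the stochastic integral is a true (not merely local) martingale.
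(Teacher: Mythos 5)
Your proposal is correct and follows essentially the same route as the paper: the paper's proof likewise observes that $d\big((\dr_x^2)^{-1}\zeta+\alpha(\dr_x^2)^{-1}z\big)=z\,dt+(\dr_x^2)^{-1}\phi\,dW_t$, so that $(\dr_x^2)^{-1}\zeta(t)+\alpha(\dr_x^2)^{-1}z(t)-\int_0^t z(s)\,ds$ is a martingale, and your integrated form with the drift cancellation is the same computation written out with the analytic bookkeeping made explicit. The only additional point in the paper is the remark that the necessary zero-mean compatibility condition $\mathbb{E}_\nu[z]=0$ holds because $\nu$ is a centered Gaussian measure, which your direct verification of the martingale property does not require given the definition of $\MM^{-1}$ fixed before the lemma.
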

\begin{proof}
By equation \eqref{equationz2}, we have
$$d\left((\dr_x^2)^{-1}\zeta+\alpha(\dr_x^2)^{-1}z\right) = z dt +(\dr_x^2)^{-1}\phi dW_t$$ and thus $(\dr_x^2)^{-1}\zeta(t)+\alpha(\dr_x^2)^{-1}z(t) -\int_0^tz(s)ds$ is a martingale.  Note that the zero-mean condition is satisfied because $\nu$ is a centered Gaussian measure. 
\end{proof}

\begin{prop}\label{coroborneLinf}
For $(z, \zeta)\in (H^1\cap\dot{H}^{-3})\times(L^2\cap\dot{H}^{-3})$  and $\phi\in\mathcal{L}_2(L^2,H^1\cap\dot{H}^{-4})$ we have 
\begin{equation}
\dbn{\MM^{-1}\big(z\MM^{-1}z -\mathbb{E}_\nu\left[z\MM^{-1}z\right]\big)}_{L^\infty} \leqslant C_\phi\big( 1+ \dbn{z}_{H^1\cap\dot{H}^{-3}}^2 + \dbn{\zeta}_{L^2\cap\dot{H}^{-3}}^2\big)
\end{equation}
where $\mathbb{E}_\nu$ denotes the expectation under the invariant measure and $C_\phi$ is a constant depending on the Hilbert-Schmidt norm of $\phi$ in the space $H^1\cap\dot{H}^{-4}$.  

Moreover, for $(z, \zeta)\in (H^2\cap\dot{H}^{-2})\times (H^1\cap\dot{H}^{-2})$  and $\phi\in\mathcal{L}_2(L^2,H^2\cap\dot{H}^{-3})$,
inequality \eqref{coroborneLinf} holds with $(z, \zeta)$ replaced by $(\dr_x z,\dr_x \zeta)$, and a constant $C_\phi$ depending on
$\|\phi\|_{\mathcal{L}_2(L^2,H^2\cap\dot{H}^{-3})}$.
\end{prop}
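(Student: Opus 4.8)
The plan is to combine the explicit formula for $\MM^{-1}z$ from Lemma~\ref{inverse1} with the probabilistic representation of $\MM^{-1}$ recalled just above the statement, and then to reduce the $L^\infty$ estimate to decay properties of the semigroup $S_\alpha$. By Lemma~\ref{inverse1}, $\MM^{-1}z=(\dr_x^2)^{-1}\zeta+\alpha(\dr_x^2)^{-1}z$, so that the functional
$$
\psi(z,\zeta):=z\,\MM^{-1}z=z\big((\dr_x^2)^{-1}\zeta+\alpha(\dr_x^2)^{-1}z\big)
$$
is an explicit quadratic (pointwise-in-$x$) functional of the state. Using the representation $\MM^{-1}\varphi(\eta)=-\mathbb{E}\int_0^\infty\varphi(z(t,\eta),\zeta(t,\eta))\,dt$, valid for centered $\varphi$, I would write, for each fixed $x$,
$$
\MM^{-1}\big(\psi-\mathbb{E}_\nu[\psi]\big)(\eta)(x)=-\int_0^\infty\Big(\mathbb{E}\big[\psi(z(t,\eta),\zeta(t,\eta))(x)\big]-\mathbb{E}_\nu[\psi](x)\Big)\,dt,
$$
and bound the supremum over $x$ of the right-hand side.

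The next step exploits the linearity of the driving process: one splits the evolution as $(z(t,\eta),\zeta(t,\eta))=S_\alpha(t)\eta+(p(t),q(t))$, where $(p,q)=\int_0^tS_\alpha(t-s)(0,\phi\,dW_s)$ is the mean-zero stochastic convolution, independent of $\eta$. Since $\psi$ is quadratic and $(p,q)$ is centered Gaussian, the expectation of the integrand splits into three pieces: a purely deterministic piece, quadratic in $S_\alpha(t)\eta$; the cross terms, which vanish because $(p,q)$ has zero mean; and a purely stochastic piece $\mathbb{E}\big[p(t)\big((\dr_x^2)^{-1}q(t)+\alpha(\dr_x^2)^{-1}p(t)\big)\big]$, independent of $\eta$, which converges as $t\to\infty$ to $\mathbb{E}_\nu[\psi]$. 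Subtracting the mean therefore cancels this limiting constant and leaves, in the stochastic piece, only a difference that decays in $t$; its time integral is finite and contributes the constant $C_\phi$ in the bound, the assumption $\phi\in\mathcal{L}_2(L^2,H^1\cap\dot{H}^{-4})$ guaranteeing both finiteness and $L^\infty$-control through a one-dimensional Sobolev embedding. The deterministic piece $a(t)\big((\dr_x^2)^{-1}b(t)+\alpha(\dr_x^2)^{-1}a(t)\big)$, with $(a(t),b(t))=S_\alpha(t)\eta$, is bounded after using $\dbn{fg}_{L^\infty}\leqslant\dbn{f}_{L^\infty}\dbn{g}_{L^\infty}$ together with the one-dimensional inequalities $\dbn{f}_{L^\infty}\lesssim\dbn{f}_{L^2}^{1/2}\dbn{\dr_xf}_{L^2}^{1/2}$ and $\dbn{(\dr_x^2)^{-1}g}_{L^\infty}\lesssim\dbn{g}_{\dot{H}^{-2}}^{1/2}\dbn{g}_{\dot{H}^{-1}}^{1/2}$, by the Sobolev norms of the two components of $S_\alpha(t)\eta$.

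The heart of the argument, and the step I expect to be the main obstacle, is then to show that these Sobolev norms of $S_\alpha(t)\eta$ are integrable in time and that their integral is controlled by $\dbn{z}_{H^1\cap\dot{H}^{-3}}^2+\dbn{\zeta}_{L^2\cap\dot{H}^{-3}}^2$. This requires precise decay estimates for the damped-wave semigroup $S_\alpha$, and the delicate point is the degenerate low-frequency behaviour: in Fourier variables the slow mode of $S_\alpha$ decays only like $e^{-\xi^2t/\alpha}$ as $\xi\to0$, so there is no uniform exponential decay in $L^2$. The negative homogeneous norms $\dot{H}^{-3}$ in the hypotheses are precisely what compensates for this, since the weights $\vert\xi\vert^{-s}$ at low frequency, combined with the heat-type decay, make the relevant time integrals converge; at high frequency one has genuine exponential decay $e^{-\alpha t/2}$, so that $\alpha>0$ is essential. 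I would carry out this low/high-frequency splitting using the explicit Fourier expression for $S_\alpha$ recorded in Section~\ref{salpha}.

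Finally, the second assertion follows by commuting one derivative $\dr_x$ through $S_\alpha$ and $(\dr_x^2)^{-1}$, all of which commute with $\dr_x$: applying the whole argument to $(\dr_x z,\dr_x\zeta)$ raises every Sobolev index by one, which accounts for the strengthened assumptions $(z,\zeta)\in(H^2\cap\dot{H}^{-2})\times(H^1\cap\dot{H}^{-2})$ and $\phi\in\mathcal{L}_2(L^2,H^2\cap\dot{H}^{-3})$, and otherwise leaves the estimate unchanged.
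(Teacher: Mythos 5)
Your proposal is correct and takes essentially the same route as the paper: your inline Gaussian splitting of $(z(t,\eta),\zeta(t,\eta))$ into $S_\alpha(t)\eta$ plus the centered stochastic convolution reproduces the paper's Lemma \ref{inverse2}, and your low/high-frequency analysis of $S_\alpha$ (heat-type decay $e^{-\xi^2t/\alpha}$ at low frequency compensated by the homogeneous negative norms, genuine exponential decay at high frequency requiring $\alpha>0$) is precisely the content of Lemma \ref{inegalites generales} in the Appendix, which the paper combines with the embedding $H^1\hookrightarrow L^\infty$. Your treatment of the second assertion, commuting $\dr_x$ through $S_\alpha$ and $(\dr_x^2)^{-1}$, likewise matches the paper's argument.
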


In order to prove Proposition \ref{coroborneLinf}, we state the following lemma: 
\begin{lemma}\label{inverse2}
Under the assumptions of Proposition \ref{coroborneLinf},
\begin{equation}
\begin{split}
\MM^{-1}&\left(z\MM^{-1}z-\mathbb{E}_\nu\left[z\MM^{-1}z\right]\right) =-\int_0^\infty\Big[\left(S_\alpha(t)(z,\zeta)\right)_1(\dr_x^2)^{-1}\left(S_\alpha(t)(z,\zeta)\right)_2 \\
-&\sum_{k\in\EN}\int_t^\infty\left(S_\alpha(s)(0,\phi e_k)\right)_1(\dr_x^2)^{-1}\left(S_\alpha(s)(0,\phi e_k)\right)_2 ds \\
+&\alpha\left(S_\alpha(t)(z,\zeta)\right)_1(\dr_x^2)^{-1}\left(S_\alpha(t)(z,\zeta)\right)_1\\
-&\alpha\sum_{k\in\EN}\int_t^\infty\left(S_\alpha(s)(0,\phi e_k)\right)_1(\dr_x^2)^{-1}\left(S_\alpha(s)(0,\phi e_k)\right)_1 ds\Big]dt,
\end{split}
\end{equation}
where $S_\alpha$ is the semigroup associated with the linear damped wave equation (see \eqref{Salpha}, \eqref{SalphaBF}  and \eqref{SalphaHF}
for an explicit expression). Besides we denote by $(S_\alpha(t)(z,\zeta))_i$ the i-th component of $S_\alpha(t)(z,\zeta)$.
\end{lemma}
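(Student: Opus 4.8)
The plan is to apply directly the probabilistic representation of $\MM^{-1}$ recalled just before the statement, namely $\MM^{-1}\varphi(\eta)=-\mathbb{E}\int_0^\infty \varphi(z(t,\eta),\zeta(t,\eta))\,dt$, to the centered functional $\varphi=z\MM^{-1}z-\mathbb{E}_\nu[z\MM^{-1}z]$. Since $\varphi$ is centered under $\nu$ by construction and has quadratic (hence polynomial) growth, this representation is licit. The first step is to insert the explicit form of $\MM^{-1}z$ provided by Lemma \ref{inverse1}, so that along a trajectory one has $\varphi(z(t,\eta),\zeta(t,\eta))=z(t,\eta)\big[(\dr_x^2)^{-1}\zeta(t,\eta)+\alpha(\dr_x^2)^{-1}z(t,\eta)\big]-\mathbb{E}_\nu[z\MM^{-1}z]$. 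Everything then reduces to computing the noise expectation of the two quadratic terms $z(t,\eta)(\dr_x^2)^{-1}\zeta(t,\eta)$ and $z(t,\eta)(\dr_x^2)^{-1}z(t,\eta)$.

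Next I would write the solution of \eqref{equationz2} started from $\eta=(z,\zeta)$ in mild form, $(z(t,\eta),\zeta(t,\eta))=S_\alpha(t)(z,\zeta)+\int_0^t S_\alpha(t-s)(0,\phi\,dW_s)$, and split each component into its deterministic part $A_i(t):=(S_\alpha(t)(z,\zeta))_i$ and its stochastic convolution $M_i(t):=\big(\int_0^t S_\alpha(t-s)(0,\phi\,dW_s)\big)_i$, for $i=1,2$. Taking the expectation over the noise (for fixed $\eta$), the cross terms $\mathbb{E}[A_i(t)(\dr_x^2)^{-1}M_j(t)]$ vanish because the $M_j(t)$ are centered It\^o integrals with deterministic integrands; this leaves the deterministic products $A_1(t)(\dr_x^2)^{-1}A_2(t)$ and $\alpha A_1(t)(\dr_x^2)^{-1}A_1(t)$ together with the martingale contributions $\mathbb{E}[M_1(t)(\dr_x^2)^{-1}M_2(t)]$ and $\alpha\,\mathbb{E}[M_1(t)(\dr_x^2)^{-1}M_1(t)]$. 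Expanding each $M_i(t)$ on the orthonormal basis $(e_k)$ and applying the It\^o isometry pointwise in $x$, then changing variables $s\mapsto t-s$, these second-moment terms become $\int_0^t\sum_k (S_\alpha(s)(0,\phi e_k))_1(\dr_x^2)^{-1}(S_\alpha(s)(0,\phi e_k))_2\,ds$ and the analogous expression with both indices equal to $1$.

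The crucial identification is then to recognize $\mathbb{E}_\nu[z\MM^{-1}z]$ as exactly the $t\to\infty$ value of these second-moment terms. Using the stationary representation \eqref{donnee_init_z}, under which $(z,\zeta)$ has the law $\int_0^\infty S_\alpha(s)(0,\phi\,dW_s)$, together with the same It\^o isometry, I would obtain $\mathbb{E}_\nu[z(\dr_x^2)^{-1}\zeta]=\int_0^\infty\sum_k (S_\alpha(s)(0,\phi e_k))_1(\dr_x^2)^{-1}(S_\alpha(s)(0,\phi e_k))_2\,ds$, and similarly with both indices $1$. Subtracting therefore replaces each $\int_0^t$ by $-\int_t^\infty$, so that $\mathbb{E}[\varphi(z(t,\eta),\zeta(t,\eta))]$ equals $A_1(t)(\dr_x^2)^{-1}A_2(t)+\alpha A_1(t)(\dr_x^2)^{-1}A_1(t)$ minus the two tail integrals $\int_t^\infty(\cdots)\,ds$. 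Integrating this expression in $t$ over $[0,\infty)$ and inserting the overall minus sign from the representation of $\MM^{-1}$ yields precisely the claimed four-term formula.

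The main obstacle is not algebraic but lies in justifying the manipulations. One must check that all the pointwise (in $x$) products such as $(S_\alpha(t)(z,\zeta))_1(\dr_x^2)^{-1}(S_\alpha(t)(z,\zeta))_2$ are well defined and lie in a space in which the subsequent $t$- and $s$-integrals converge absolutely, and that the exchanges of expectation, summation over $k$, and time integration (Fubini) are legitimate. Both rely on the exponential decay of $S_\alpha$ for $\alpha>0$ and on the Hilbert--Schmidt regularity of $\phi$ assumed in Proposition \ref{coroborneLinf}, which together guarantee summability over $k$ and integrability in $s$ and $t$. A secondary point to confirm is that the process started from a fixed $\eta$ has finite, convergent second moments, so that the representation of $\MM^{-1}$ genuinely applies to the quadratic functional $\varphi$ and the tail integrals $\int_t^\infty$ are finite.
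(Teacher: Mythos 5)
Your proposal is correct and follows essentially the same route as the paper: split $\MM^{-1}z$ via Lemma \ref{inverse1}, apply the probabilistic representation $\MM^{-1}\varphi(\eta)=-\mathbb{E}\int_0^\infty\varphi(z(t,\eta),\zeta(t,\eta))\,dt$ to the centered quadratic functional, use the mild form of \eqref{equationz2} with vanishing cross terms and the It\^o isometry, and identify $\mathbb{E}_\nu\left[z\MM^{-1}z\right]$ through the stationary representation \eqref{donnee_init_z} so that subtraction turns $\int_0^t$ into $-\int_t^\infty$. The only cosmetic difference is that the paper treats the $\zeta$-term and the $\alpha z$-term separately while you carry both at once, and your sign bookkeeping actually matches the lemma's statement (the paper's in-proof final display has a sign typo there).
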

\begin{proof}
Thanks to Lemma \ref{inverse1},
\begin{align*}
\MM^{-1}\left(z\MM^{-1}z-\mathbb{E}_\nu\left[z\MM^{-1}z\right]\right)&= 
\MM^{-1}\left(z(\dr_x^2)^{-1}\zeta -\mathbb{E}_\nu\left[z(\dr_x^2)^{-1}\zeta\right]\right)\\
&+\alpha \MM^{-1}\left( z(\dr_x^2)^{-1}z -\mathbb{E}_\nu\left[ z(\dr_x^2)^{-1}z\right]\right).
\end{align*}
For the first term, we write:
\begin{equation}\label{esp_nu}
\begin{split}
\mathbb{E}_\nu[z(\dr_x^2)^{-1}\zeta]% =& \int z(\dr_x^2)^{-1}\zeta d\nu(z,\zeta)  \\
=&\mathbb{E}\Big[\big(\int_{-\infty}^0S_\alpha(-s)(0,\phi dW_s)\big)_1\big(\int_{-\infty}^0(\dr_x^2)^{-1}S_\alpha(-s)(0,\phi dW_s)\big)_2\big] \\
=& \sum_{k\in\EN}\int_0^{\infty}\big(S_\alpha(s)(0,\phi e_k)\big)_1\big((\dr_x^2)^{-1}S_\alpha(s)(0,\phi e_k)\big)_2 ds
\end{split}
\end{equation}
thanks to It\^o isometry.  
It follows: 
\begin{align*}
\MM^{-1}\left(z(\dr_x^2)^{-1}\zeta \right.&\left. -\mathbb{E}_\nu\left[z(\dr_x^2)^{-1}\zeta\right]\right) \\=& 
 -\int_0^\infty\mathbb{E}\Big[\big(S_\alpha(t)(z,\zeta)+\int_0^tS_\alpha(t-s)(0,\phi dWs)\big)_1 \\
 \times(\dr_x^2)^{-1}&\big(S_\alpha(t)(z,\zeta)+\int_0^tS_\alpha(t-s)(0,\phi dWs)\big)_2-\mathbb{E}_\nu\left[z(\dr_x^2)^{-1}\zeta\right]\Big] dt \\
 =&-\int_0^\infty\Big[\left(S_\alpha(t)(z,\zeta)\right)_1(\dr_x^2)^{-1}\left(S_\alpha(t)(z,\zeta)\right)_2\\
 &+\sum_{k\in\EN}\int_t^\infty \left(S_\alpha(s)(0,\phi e_k)\right)_1(\dr_x^2)^{-1}\left(S_\alpha(s)(0,\phi e_k)\right)_2ds\Big] dt.
\end{align*}
The argument is the same for the second term.
\end{proof}

\begin{proof}[Proof of Proposition \ref{coroborneLinf}]
The proof is now a consequence of Lemma \ref{inverse2}, together with the embedding $H^1\hookrightarrow L^\infty$ and the bounds  
of Lemma \ref{inegalites generales}.
\end{proof}

\section{Modified energy estimate}\label{sectionenergie}

Before stating our modified energy estimate, let us explain how we apply the perturbed test function method to our problem.
Recall that  $\MM$ is the infinitesimal generator of $(z,\zeta)$. We denote by $\mathcal{L}^\varepsilon$ the infinitesimal generator of $(u,m,\mu,z^\varepsilon,\zeta^\varepsilon)$, associated with the system \eqref{systemetranslate}, that we rewrite in the form
\begin{equation} \label{systemeummu}
\left\{\begin{array}{l}
i\dr_tu=-\dr_x^2u+(m+\frac{1}{\sqrt{\vep}}z^\varepsilon)u, \\[0.2cm]
\dr_t m=\mu,\\[0.2cm]
\varepsilon^2\dr_t \mu+\alpha\varepsilon \mu=\dr_x^2(m + \vert u\vert^2),
\end{array}\right.
\end{equation}
recalling that $(z^\vep,\zeta^\vep)$ is the solution of \eqref{equationzeps} with initial condition $z^\vep(0)=\zeta^\vep(0)=0$.
Provided $\varphi$ is a smooth function, It\^o formula gives:
\begin{align*}
\mathcal{L}^\varepsilon\varphi(u,m,\varepsilon^2\mu,z^\varepsilon,  \zeta^\varepsilon)=&D_u\varphi(i\dr_x^2u-imu)-\frac{1}{\sqrt{\varepsilon}}D_u\varphi(iz^\varepsilon u) +D_m\varphi(\mu) \\&+D_\mu\varphi(\dr_x^2(m+\vert u\vert^2)-\alpha\varepsilon\mu)+\frac{1}{\varepsilon}\MM\varphi,
\end{align*}
where
\begin{equation}
\MM\varphi= D_z\varphi(\zeta)+D_\zeta\varphi(\dr_x^2z-\alpha\zeta) +\frac{1}{2}Tr(\phi^*D^2_\zeta\varphi\phi).
\end{equation}

If $\varphi$ does not depend on $(z,\zeta)$, then $\MM\varphi=0$, and in order to deal with the singular term in $\varepsilon^{-1/2}$
in the expression of $\mathcal{L}^\varepsilon \varphi$, it seems natural to add to $\varphi$ a corrector $\sqrt{\vep} \varphi_1$, with
$\MM\varphi_1=D_u\varphi(izu)$, or in other terms, according to Lemma \ref{inverse1},
\begin{equation} \label{firstcorr}
\varphi_1(u,z,\zeta)=D_u\varphi(i\MM^{-1}zu)
= D_u\varphi\left(iu(\dr_x^2)^{-1}(\zeta+\alpha z)\right).
\end{equation}
The new terms in $\mathcal{L}^\varepsilon(\varphi+\sqrt{\varepsilon}\varphi_1)$ have to be controled uniformly in $\vep$. Due to the growth 
of $(z^\varepsilon,\zeta^\varepsilon)$ as a negative power of $\varepsilon$ (see Proposition \ref{propprocstationnaire}), we are led to consider a second corrector
\begin{equation} \label{secondcorr}
\begin{split}
\varphi_2&(u,z,\zeta)=\MM^{-1}\left(D_u\varphi_1(iuz)-\mathbb{E}_\nu \left[D_u\varphi_1(iuz)\right]\right) \\
&=\MM^{-1}\left(D_u\left(D_u\varphi(iu\MM^{-1}z)\right)(iuz)-\mathbb{E}_\nu\left[ D_u\left(D_u\varphi(iu\MM^{-1}z)\right)(iuz)\right]\right),
\end{split}
\end{equation}
where $\nu$ is the invariant measure of $z^\varepsilon$, or of $z$.  Finally,  setting $\varphi^\varepsilon=\varphi+\sqrt{\varepsilon}\varphi_1+\varepsilon\varphi_2$, or more precisely,
\begin{equation}
\begin{split}
\varphi^\varepsilon(u,m,\mu,z^\varepsilon,\zeta^\varepsilon)=&\varphi(u,m,\mu)+\sqrt{\varepsilon}D_u\varphi\left(iu(\dr_x^2)^{-1}(\zeta^\varepsilon+\alpha z^\varepsilon)\right)\\&+\varepsilon\MM^{-1}\left(D_u\varphi_1(iuz^\varepsilon)-\mathbb{E}_\nu\left[D_u\varphi_1(iuz)\right]\right),
\end{split}
\end{equation}
we have a hope to bound  $\mathcal{L}^\varepsilon \varphi^\vep$ uniformly in $\vep$.

We now apply the above computations to the energy $H(u,m,\mu)$, where we recall that
$$
H(u,m,\mu)= \dbn{\dr_xu}_{L^2}^2 + \frac{1}{2}(\dbn{m}_{L^2}^2+\dbn{\varepsilon V}_{L^2}^2)+\int_\ER m\vert u\vert^2 dx,
$$ 
with $V=-\dr_x^{-1}\mu$. 
We also introduce 
\begin{equation}\label{K}
K(u,m,\mu)= \dbn{\dr_xu}_{L^2}^2 + \frac{1}{2}\dbn{m}_{L^2}^2 +\frac{1}{2}\dbn{\varepsilon V}_{L^2}^2.
\end{equation}	 
It is clear that, for some constant $C>0$,
 \begin{equation}\label{borneH}
\frac12 K(u,m,\mu) -C \|u\|_{L^2}^6  \leqslant H(u,m,\mu)  \leqslant 2K(u,m,\mu) +C\|u\|_{L^2}^6
\end{equation} 
for any $(u,m,\mu) \in H^1\times L^2 \times \dot{H}^{-1}$.

Now, the above computation of $\mathcal{L}^\vep \varphi$ applied to $\varphi=H$ (which does not depend on $(z,\zeta)$), and the fact that $H$ is 
preserved for the deterministic equation leads to
\begin{equation} \label{LepsH}
\mathcal{L}^\varepsilon H(u,m,\mu)=-\alpha\varepsilon\dbn{\dr_x^{-1}\mu}_{L^2}^2  -\frac{2}{\sqrt{\varepsilon}}   Re    \int_\ER iu\dr_x\bar{u}\dr_xz^\varepsilon  dx .
\end{equation}
The first corrector  $H_1(u,z^\vep,\zeta^\vep)$ is computed thanks to \eqref{firstcorr} and Lemma \ref{inverse1}: 
\begin{equation}\label{H1}
\begin{split}
H_1(u,z^\vep,\zeta^\vep) &=2   Re    \int_\ER iu\dr_x\bar{u}\dr_x\MM^{-1}z^\varepsilon dx\\
&= 2   Re    \int_\ER iu\dr_x\bar{u}\left(\dr_x^{-1}\zeta^\varepsilon +\alpha\dr_x^{-1}z^\varepsilon \right) dx,
\end{split}
\end{equation} 
while $H_2(u,z^\vep,\zeta^\vep)$ is given by (see \eqref{secondcorr}):
\begin{equation}\label{H2}
H_2 (u,z^\vep,\zeta^\vep)= 2\int_\ER\vert u\vert^2\MM^{-1}\left(\dr_xz^\varepsilon\dr_x\MM^{-1}z^\varepsilon-\mathbb{E}_\nu\left[\dr_xz\dr_x\MM^{-1}z\right]\right)dx.
\end{equation}

\begin{prop}\label{bornesHK}
Let  $u_0\in H^3(\ER),m_0\in H^2(\ER)\cap \dot{H}^{-1}(\ER), m_1\in H^1(\ER)\cap\dot{H}^{-1}(\ER)$, and let $(z^\vep(t),\zeta^\vep(t))$ be the unique
 solution of \eqref{equationzeps} satisfying $z^\vep(0)=\zeta^\vep(0)=0$, with $\phi \in \mathcal{L}_2(L^2, H^2\cap \dot{H}^{-3})$. Let $(u(t),m(t),\mu(t))$ be the solution of
\eqref{systemeummu} given by Theorem \ref{regular-solutions}, and consider
\begin{equation} \label{Heps}
H^\vep(t)=H(u(t),m(t),\mu(t))+\sqrt{\vep} H_1(u(t),z^\vep(t),\zeta^\vep(t)) + \vep H_2(u(t),z^\vep(t),\zeta^\vep(t)),
\end{equation}
$H_1$ and $H_2$ being respectively defined in \eqref{H1} and \eqref{H2}. Let us fix {$\delta\leqslant \frac18$}, and let $\tau^\vep_\delta$
be defined in \eqref{deftaudelta}. Then, there is a constant $C$ depending only on $\|u_0\|_{L^2}$ and 
$\|\phi\|_{\mathcal{L}_2(L^2, H^2\cap \dot{H}^{-3})}$
such that for any $\vep$ with $0<\vep\leqslant 1$,
$$
\frac14 K(u(t),m(t),\mu(t)) -C \leqslant H^\vep(t) \leqslant 3K(u(t),m(t),\mu(t)) +C, \; a.s.,
$$
for any $t<\tau^\vep_\delta$.
\end{prop}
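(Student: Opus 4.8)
The plan is to treat the two correctors $\sqrt{\vep}H_1$ and $\vep H_2$ as lower-order perturbations of $H$ and to absorb them into the quadratic quantity $K$ up to an additive constant. The backbone is the already established comparison \eqref{borneH}, which together with the conservation of mass (established in the proof of Lemma \ref{GronwallH}, so that $\dbn{u(t)}_{L^2}=\dbn{u_0}_{L^2}$ for all $t$) gives $\frac12 K(u,m,\mu)-C\leqslant H(u,m,\mu)\leqslant 2K(u,m,\mu)+C$ with $C$ depending only on $\dbn{u_0}_{L^2}$. It therefore suffices to show, for $t<\tau^\vep_\delta$ and $0<\vep\leqslant1$, that $\sqrt{\vep}\,|H_1(t)|\leqslant \frac14 K(t)+C$ and $\vep\,|H_2(t)|\leqslant C$, with $C$ as in the statement; inserting these into the definition \eqref{Heps} of $H^\vep$ then yields $\frac14 K-C\leqslant H^\vep\leqslant 3K+C$ at once.

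For the first corrector I would start from the explicit expression \eqref{H1}, where $\dr_x\MM^{-1}z^\vep=\dr_x^{-1}\zeta^\vep+\alpha\dr_x^{-1}z^\vep$ by Lemma \ref{inverse1}. A Cauchy--Schwarz estimate gives $|H_1|\leqslant 2\dbn{u}_{L^\infty}\dbn{\dr_xu}_{L^2}\big(\dbn{\dr_x^{-1}\zeta^\vep}_{L^2}+\alpha\dbn{\dr_x^{-1}z^\vep}_{L^2}\big)$. The negative-norm factors $\dbn{\dr_x^{-1}\zeta^\vep}_{L^2}=\dbn{\zeta^\vep}_{\dot{H}^{-1}}$ and $\dbn{\dr_x^{-1}z^\vep}_{L^2}=\dbn{z^\vep}_{\dot{H}^{-1}}$ are controlled by the elementary frequency splitting $\dbn{f}_{\dot{H}^{-1}}^2\leqslant \dbn{f}_{L^2}^2+\dbn{f}_{\dot{H}^{-3}}^2\leqslant C\dbn{f}_{H^3\cap\dot{H}^{-3}}^2$, so that for $t<\tau^\vep_\delta$ they are both $\leqslant C\vep^{-\delta}$ by the definition \eqref{deftaudelta}. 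Using the one-dimensional inequality $\dbn{u}_{L^\infty}^2\leqslant C\dbn{u}_{L^2}\dbn{\dr_xu}_{L^2}$ and mass conservation, this bounds $\sqrt{\vep}\,|H_1|$ by $C\vep^{1/2-\delta}\dbn{\dr_xu}_{L^2}^{3/2}$. A Young inequality with exponents $4/3$ and $4$ then gives $\sqrt{\vep}\,|H_1|\leqslant \frac14\dbn{\dr_xu}_{L^2}^2+C\vep^{2-4\delta}\leqslant \frac14 K+C$, where the last step uses $\dbn{\dr_xu}_{L^2}^2\leqslant K$ and $2-4\delta\geqslant0$ (which holds since $\delta\leqslant\frac18$).

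The second corrector is where the preparatory work of Section \ref{driving} pays off. Starting from \eqref{H2}, I would pull the $L^\infty$ norm of the $\MM^{-1}$-factor out of the integral, obtaining $|H_2|\leqslant 2\dbn{u}_{L^2}^2\,\dbn{\MM^{-1}\big(\dr_xz^\vep\,\dr_x\MM^{-1}z^\vep-\mathbb{E}_\nu[\dr_xz\,\dr_x\MM^{-1}z]\big)}_{L^\infty}$. Since $\MM$ commutes with $\dr_x$, the argument of $\MM^{-1}$ is of the form $w\MM^{-1}w$ with $w=\dr_xz^\vep$, so the second part of Proposition \ref{coroborneLinf} applies and bounds the $L^\infty$ norm by $C_\phi\big(1+\dbn{\dr_xz^\vep}_{H^1\cap\dot{H}^{-3}}^2+\dbn{\dr_x\zeta^\vep}_{L^2\cap\dot{H}^{-3}}^2\big)$, with $C_\phi$ depending on $\dbn{\phi}_{\mathcal{L}_2(L^2,H^2\cap\dot{H}^{-3})}$. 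Bounding $\dbn{\dr_xz^\vep}_{H^1\cap\dot{H}^{-3}}\leqslant C\dbn{z^\vep}_{H^2\cap\dot{H}^{-2}}\leqslant C\dbn{z^\vep}_{H^3\cap\dot{H}^{-3}}$ and likewise for $\zeta^\vep$, and using mass conservation, we obtain $|H_2|\leqslant C\vep^{-2\delta}$ for $t<\tau^\vep_\delta$, hence $\vep\,|H_2|\leqslant C\vep^{1-2\delta}\leqslant C$ since $1-2\delta\geqslant0$ for $\delta\leqslant\frac18$. Combining this with the bound on $H_1$ and the comparison for $H$ finishes the proof.

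The main difficulty is really concentrated in the preparatory Proposition \ref{coroborneLinf}: once the $L^\infty$ bound on the twice-inverted generator is available, $H_2$ is controlled by a harmless positive power of $\vep$ and contributes nothing beyond a constant. The only genuinely delicate point in the present proof is the super-quadratic growth of $H_1$ in $\dbn{\dr_xu}_{L^2}$, handled by the Young inequality above; the smallness condition $\delta\leqslant\frac18$ is precisely what guarantees that the remainder powers $\vep^{2-4\delta}$ and $\vep^{1-2\delta}$ remain bounded as $\vep\to0$, so that $C$ depends only on $\dbn{u_0}_{L^2}$ and $\dbn{\phi}_{\mathcal{L}_2(L^2,H^2\cap\dot{H}^{-3})}$ and not on $\vep$.
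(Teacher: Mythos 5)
Your proof is correct and follows essentially the same route as the paper: compare $H^\vep$ with $K$ via \eqref{borneH} and mass conservation, bound $\sqrt{\vep}H_1$ using the cutoff \eqref{deftaudelta} and absorb it into $\frac14 K$ by Young's inequality, and control $\vep H_2$ through the $L^\infty$ bound of Proposition \ref{coroborneLinf} applied to $(\dr_x z^\vep,\dr_x\zeta^\vep)$. The only (immaterial) deviation is in the H\"older split for $H_1$: you place $L^\infty$ on $u$ via Gagliardo--Nirenberg and use Young with exponents $(4/3,4)$, whereas the paper places $L^2$ on $u$ and $H^1\hookrightarrow L^\infty$ on $\dr_x^{-1}\zeta^\vep,\dr_x^{-1}z^\vep$, yielding $K^{1/2}$ and standard Young with exponents $(2,2)$.
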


\begin{proof}
In view of \eqref{borneH}, it suffices to obtain adequate bounds on $H_1$ and $H_2$. From
\eqref{H1}, we deduce
\begin{equation}\label{borneH_1}
%\begin{split}
\vert \sqrt{\vep} H_1(u,z^\vep,\zeta^\vep) \vert  \leqslant 2\dbn{u}_{L^2} K^{\frac12}(u,m,\mu)(\sqrt{\vep}\dbn{\dr_x^{-1}\zeta^\varepsilon}_{H^1}+\alpha\sqrt{\vep} \dbn{\dr_x^{-1}z^\varepsilon}_{H^1}), \\
\end{equation}
while \eqref{H2} and Proposition \ref{coroborneLinf} imply
\begin{equation}\label{borneH_2}
\begin{split}
\vert \vep H_2(u,z^\vep,\zeta^\vep)\vert \leqslant 2\vep \dbn{u}_{L^2}^2\dbn{\MM^{-1}\left(\dr_xz^\varepsilon\dr_x\MM^{-1}z^\varepsilon-\mathbb{E}_\nu\left[\dr_xz\dr_x\MM^{-1}z\right]\right)}_{L^\infty} \\
\leqslant C_\phi \|u\|_{L^2}^2 (1+ \vep \dbn{z^\varepsilon}_{H^2\cap\dot{H}^{-2}}^2+ \vep \dbn{ \zeta^\varepsilon}_{H^1\cap\dot{H}^{-2}}^2).
\end{split}
\end{equation}
The conclusion follows thanks to Young's inequality, \eqref{deftaudelta} and the conservation of $L^2$ norm.
\end{proof}

Next, we need to compute the evolution of $H^\vep(t)$ defined in \eqref{Heps}, in order to obtain a uniform estimate with respect to $\vep$ 
on the solution. The It\^o formula will give us an evolution of the form 
\begin{equation} \label{evolHeps}
dH^\vep(t) =\mathcal{L}^\vep H^\vep(t) +dX_t+dY_t,
\end{equation}
where $X_t$ and $Y_t$ are martingale terms arising from the correctors $H_1$ and $H_2$. The estimate is as follows.

\begin{prop}\label{boundpolynome}
Under the assumptions of Proposition \ref{bornesHK}, there exist positive constants $B$ and $C$, depending only on $\|u_0\|_{L^2}$
and $\|\phi\|_{\mathcal{L}_2(L^2,H^3\cap\dot{H}^{-3})}$ such that
for any $\varepsilon$ with $0<\varepsilon\leqslant1$, 
\begin{equation}
\mathcal{L}^\varepsilon(H^\varepsilon(t)) +\alpha\dbn{\sqrt{\varepsilon}V(t)}_{L^2}^2\leqslant \varepsilon K_t^2+BK_t+C, \; a.s.,
\end{equation}
for any $t\in  [0,\tau^\varepsilon_\delta]$, where $K_t=K(u(t),m(t),\mu(t))$, and $V(t)=-\dr_x^{-1}\mu(t)$.
\end{prop}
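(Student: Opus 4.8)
The plan is to apply the generator $\mathcal{L}^\vep$ to $H^\vep=H+\sqrt{\vep}H_1+\vep H_2$ and to exploit the defining equations of the correctors to produce a chain of cancellations, leaving only terms estimable by $\vep K_t^2+BK_t+C$. Writing $\mathcal{L}^\vep H^\vep=\mathcal{L}^\vep H+\sqrt{\vep}\,\mathcal{L}^\vep H_1+\vep\,\mathcal{L}^\vep H_2$ with the It\^o formula preceding \eqref{firstcorr}, I first record three facts. Since $H$ does not depend on $(z,\zeta)$, the $\vep^{-1}$ contribution vanishes. By Lemma \ref{inverse1} one has $\MM H_1=D_uH(iz^\vep u)$, which is exactly the opposite of the singular $\vep^{-1/2}$ term in \eqref{LepsH}; hence the term $\sqrt{\vep}\cdot\tfrac1\vep\MM H_1$ cancels that singularity. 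Finally, by the very definition \eqref{secondcorr} of $H_2$, $\MM H_2=D_uH_1(iz^\vep u)-\mathbb{E}_\nu[D_uH_1(izu)]$.

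A useful preliminary observation is that $D_uH_2(iz^\vep u)=0$ and $D_uH_2(-imu)=0$: indeed $H_2$ depends on $u$ only through $|u|^2$, and since $z^\vep$, $m$ and $\MM^{-1}(\cdots)$ are real, the relevant integrand $\mathrm{Re}\big(iz^\vep|u|^2\,\MM^{-1}(\cdots)\big)$ vanishes pointwise, and similarly for $m$; likewise $D_mH_1=D_\mu H_1=D_mH_2=D_\mu H_2=0$ since $H_1,H_2$ do not depend on $(m,\mu)$. Collecting the surviving terms, the transport term $-D_uH_1(iz^\vep u)$ coming from $\sqrt{\vep}\,\mathcal{L}^\vep H_1$ cancels the $+D_uH_1(iz^\vep u)$ coming from $\vep\cdot\tfrac1\vep\MM H_2$, the contribution $-\sqrt{\vep}\,D_uH_2(iz^\vep u)$ vanishes, and the damping $\mathcal{B}H=-\alpha\vep\dbn{V}_{L^2}^2$ is moved to the left-hand side. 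I therefore expect to arrive at
\begin{equation*}
\mathcal{L}^\vep H^\vep+\alpha\vep\dbn{V}_{L^2}^2=\sqrt{\vep}\,D_uH_1(i\dr_x^2u-imu)-\mathbb{E}_\nu\big[D_uH_1(izu)\big]+\vep\,D_uH_2(i\dr_x^2u).
\end{equation*}

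It then remains to bound the three terms on $[0,\tau^\vep_\delta]$, where $\dbn{z^\vep}_{H^3\cap\dot{H}^{-3}}+\dbn{\zeta^\vep}_{H^3\cap\dot{H}^{-3}}\leqslant\vep^{-\delta}$. The averaged term is harmless: since $D_uH_1(izu)=2\int_\ER|u|^2\,\dr_xz\,\dr_x\MM^{-1}z\,dx$ and $\mathbb{E}_\nu[\dr_xz\,\dr_x\MM^{-1}z]$ is a bounded, space-independent function, conservation of the $L^2$ norm gives $|\mathbb{E}_\nu[D_uH_1(izu)]|\leqslant C\dbn{u_0}_{L^2}^2=C$. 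For the $O(\sqrt{\vep})$ term I would compute $D_uH_1$ explicitly and integrate by parts repeatedly to remove all derivatives of order $\geqslant 2$ on $u$ and the derivative on $m$; writing $g^\vep=\dr_x^{-1}\zeta^\vep+\alpha\dr_x^{-1}z^\vep$, this reduces $D_uH_1(i\dr_x^2u)$ to a combination of $\int|\dr_xu|^2\dr_xg^\vep$ and $\mathrm{Re}\int u\,\dr_x^2g^\vep\,\dr_x\bar u$, and $D_uH_1(-imu)$ to $2\int\dr_x(|u|^2g^\vep)\,m\,dx$. Using $H^1\hookrightarrow L^\infty$ and the stopping time to bound $\dbn{g^\vep}_{L^\infty},\dbn{\dr_xg^\vep}_{L^\infty},\dbn{\dr_x^2g^\vep}_{L^\infty}\leqslant C\vep^{-\delta}$, the Gagliardo--Nirenberg inequalities $\dbn{u}_{L^\infty}\lesssim\dbn{u}_{L^2}^{1/2}\dbn{\dr_xu}_{L^2}^{1/2}$, $\dbn{u}_{L^4}\lesssim\dbn{u}_{L^2}^{3/4}\dbn{\dr_xu}_{L^2}^{1/4}$, and $\dbn{\dr_xu}_{L^2}^2+\dbn{m}_{L^2}^2\leqslant CK_t$, the dominant contribution is $C\vep^{1/2-\delta}K_t^{5/4}$ (arising from $\int\dr_x(|u|^2)g^\vep m\,dx$), all the others being $\leqslant C\vep^{1/2-\delta}(K_t+K_t^{3/4})$. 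The last term $\vep\,D_uH_2(i\dr_x^2u)=4\vep\int\mathrm{Im}(\bar u\,\dr_xu)\,\dr_x\MM^{-1}(\cdots)\,dx$ is bounded by the second part of Proposition \ref{coroborneLinf} (applied with $(z,\zeta)$ replaced by $(\dr_xz,\dr_x\zeta)$, which is precisely why $\phi\in\mathcal{L}_2(L^2,H^3\cap\dot{H}^{-3})$ is assumed) by $C\vep^{1-2\delta}K_t^{1/2}\leqslant BK_t+C$.

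Finally I would absorb these bounds into the stated form by Young's inequality. Since $\delta\leqslant\frac18$ one has $\vep^{1/2-\delta}\leqslant1$, so the only genuinely quadratic contribution is the critical term, treated by interpolating $K_t^{5/4}=(K_t^2)^{1/4}(K_t)^{3/4}$ and applying weighted Young with the weight tuned so that the coefficient of $K_t^2$ is exactly $\vep$; this yields $\vep^{1/2-\delta}K_t^{5/4}\leqslant\frac14\vep K_t^2+C\vep^{1/6}K_t$ and explains the appearance of $\vep K_t^2$ in the statement, while all remaining terms are $\leqslant BK_t+C$. I expect the \emph{main obstacle} to be the bookkeeping of the $O(\sqrt{\vep})$ corrector term: one must integrate by parts carefully so that no derivative of $u$ of order two or higher and no derivative of $m$ survives --- none of these being controlled by $K_t$ --- while simultaneously keeping the powers $\vep^{-\delta}$ coming from the fast variables small enough (this is where $\delta\leqslant\frac18$ enters) for the weighted Young step to close at the level $\vep K_t^2+BK_t+C$.
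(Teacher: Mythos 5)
Your proposal is correct and follows essentially the same route as the paper: the corrector cancellations you describe (the defining relations $\MM H_1=D_uH(iz^\vep u)$, $\MM H_2=D_uH_1(iz^\vep u)-\mathbb{E}_\nu[D_uH_1(izu)]$, together with the vanishing of $D_uH_2(iz^\vep u)$ and $D_uH_2(-imu)$ by realness) reproduce exactly the paper's identity \eqref{generateurcorrige}, and your term-by-term bounds on $[0,\tau^\vep_\delta]$ — including the critical $\vep^{1/2-\delta}K_t^{5/4}$ contribution absorbed via weighted Young into $\vep K_t^2+BK_t+C$, and the use of the second part of Proposition \ref{coroborneLinf} for the $O(\vep)$ term — match the paper's estimates. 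One cosmetic slip: $\mathbb{E}_\nu[\dr_xz\,\dr_x\MM^{-1}z]$ is in general a genuine function of $x$ (the noise need not be spatially homogeneous), but only its $L^\infty$ bound, which follows from Lemma \ref{inegalites generales}, is needed, so your conclusion $\vert\mathbb{E}_\nu[D_uH_1(izu)]\vert\leqslant C\dbn{u_0}_{L^2}^2$ stands.
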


\begin{proof}
By \eqref{LepsH}, and It\^o formula applied to \eqref{H1} using \eqref{equationzeps} and \eqref{systemeummu}, we easily obtain
\begin{equation}\label{H+sqrtepsH1}
\begin{split}
d(H+\sqrt{\varepsilon}H_1)=&\Big(-\alpha\varepsilon\dbn{\dr_x^{-1}\mu}_{L^2}^2-2\int_\ER\vert u\vert^2\dr_xz^\varepsilon (\dr_x^{-1}\zeta^\varepsilon +\alpha\dr_x^{-1}z^\varepsilon ) dx \\
&+ 4\sqrt{\varepsilon}\int_\ER \vert \dr_x u\vert ^2(\zeta^\varepsilon +\alpha z^\varepsilon ) dx + \sqrt{\varepsilon}\int_\ER \dr_x\vert u\vert^2(\dr_x\zeta^\varepsilon +\alpha\dr_x z^\varepsilon ) dx  \\&-2\sqrt{\varepsilon}\int_\ER\vert u\vert^2\dr_xm (\dr_x^{-1}\zeta^\varepsilon +\alpha\dr_x^{-1}z^\varepsilon ) dx\Big)dt \\
&+2   Re    \int_\ER iu\dr_x\bar{u}\sum_{k\in\EN}\dr_x^{-1}(\phi e_k) d\beta_k(t)dx \\
=& \mathcal{L}^\varepsilon (H+\sqrt{\varepsilon}H_1) dt + dX_t,
\end{split} 
\end{equation}
with\begin{equation}\label{X_t}
\begin{split}
X_t&= 2   Re    \int_\ER\int_0^t iu\dr_x\bar{u}\sum_{k\in\EN}\dr_x^{-1}(\phi e_k) d\beta_k(s)dx  .
\end{split}
\end{equation}

Introducing then the corrector $H_2$ defined in \eqref{H2} in order to control the terms of order zero in $\vep$ in the above expression
of $\mathcal{L}^\varepsilon (H+\sqrt{\varepsilon}H_1)$, we obtain
\begin{equation}\label{generateurcorrige}
\begin{split}
\mathcal{L}^\varepsilon(H^\varepsilon_t ) =& -2\int_\ER\vert u\vert^2\mathbb{E}_\nu\left[\dr_xz \dr_x\MM^{-1}z \right]dx \\
+& \sqrt{\varepsilon}\Big(4\int_\ER\vert \dr_xu\vert^2(\zeta^\varepsilon +\alpha z^\varepsilon )dx +\int_\ER\dr_x\vert u\vert^2(\dr_x\zeta^\varepsilon +\alpha \dr_xz^\varepsilon )dx \\&-2\int_\ER\vert u\vert^2\dr_xm(\dr_x^{-1}\zeta^\varepsilon +\alpha\dr_x^{-1}z^\varepsilon )dx\Big) \\
-&\alpha\dbn{\sqrt{\varepsilon}V}_{L^2}^2+4\varepsilon    Re    \int_\ER i\bar{u}\dr_x^2u\MM^{-1}\left(\dr_xz^\varepsilon \dr_x\MM^{-1}z^\varepsilon -\mathbb{E}_\nu\left[\dr_xz \dr_x\MM^{-1}z \right]\right)dx,
\end{split}
\end{equation}
where we recall that $H^\vep_t$ is defined in \eqref{Heps} and $V=-\dr_x^{-1}\mu$.
%\begin{remark} \label{remY_t}
Note that the second corrector also contributes to the martingale part which can be computed thanks to It\^o formula and Lemma \ref{inverse2} adding
\begin{equation}\label{Y_t}
\begin{split}
Y_t=&{\sqrt{\varepsilon}}\int_{\!s=0}^t\int_\ER\!\vert u\vert^2\int_{\!t'=0}^\infty \!\!\dr_x\!\left(S_\alpha(t')(z^\varepsilon, \phi dW_s)\right)_1 
\dr_x^{-1}\!\left(\left(S_\alpha(t')(z^\varepsilon, \zeta^\varepsilon)\right)_2+\alpha \!\left(S_\alpha(t')(z^\varepsilon, \zeta^\varepsilon)\right)_1\right) \\
&+\dr_x\!\left(S_\alpha(t')(z^\varepsilon, \zeta^\varepsilon)\right)_1 \dr_x^{-1}\!\left(\left(S_\alpha(t')(z^\varepsilon, \phi dW_s)\right)_2+\alpha\!\left(S_\alpha(t')(z^\varepsilon, \phi dW_s)\right)_1\right)dtdx .
\end{split} 
\end{equation}
%\end{remark}
In other words,  $dH^\varepsilon_t=\mathcal{L}^\varepsilon H^\varepsilon_t dt + d(X_t+Y_t)$ with $X_t$ defined in \eqref{X_t} and $Y_t$ in \eqref{Y_t}.

It remains to bound the terms in the right hand side of  \eqref{generateurcorrige}.
First, Lemma \ref{inverse1}, a computation similar to \eqref{esp_nu}, and Lemma \ref{inegalites generales} easily gives
 \begin{align*}
\left\vert 2\int_\ER \vert u\vert^2\mathbb{E}_\nu\left[\dr_xz\dr_x\MM^{-1}z\right]dx \right\vert\leqslant C \|\phi\|_{\mathcal{L}(L^2;H^1\cap \dot{H}^{-2})}^2 \dbn{u}_{L^2}^2.
\end{align*}
Besides, for $t \in [0,\tau^\varepsilon_\delta]$,
 \begin{align*}
\left\vert 4\sqrt{\varepsilon}\int_\ER\vert\dr_xu\vert^2(\zeta^\varepsilon+\alpha z^\varepsilon)dx\right\vert &\leqslant C\sqrt{\varepsilon}\dbn{\dr_xu}_{L^2}^2\left(\dbn{\zeta^\varepsilon}_{H^1}+\alpha\dbn{z^\varepsilon}_{H^1}\right) \\
&\leqslant C\varepsilon^{\frac{1}{2}-\delta}K_t,
\end{align*}
and
\begin{align*}
\left\vert\sqrt{\varepsilon}\int_\ER\dr_x\vert u\vert^2\left(\dr_x\zeta^\varepsilon+\alpha\dr_xz^\varepsilon\right)dx \right\vert 
&\leqslant C \sqrt{\varepsilon}\dbn{u}_{L^2}\dbn{\dr_xu}_{L^2}
\left(\dbn{\zeta^\varepsilon}_{H^2}+\alpha\dbn{z^\varepsilon}_{H^2}\right) \\
&\leqslant C\varepsilon^{\frac{1}{2}-\delta}K_t^\frac{1}{2} 
\leqslant C+K_t,
\end{align*}
thanks to the conservation of $\|u\|_{L^2}$ and since $\vep\leqslant 1$.
It remains to estimate two terms in the right-hand side of \eqref{generateurcorrige} . First, by integration by parts: 
\begin{align*}
\sqrt{\varepsilon}\left\vert\int_\ER\vert u\vert^2\dr_xm\left(\dr_x^{-1}\zeta^\varepsilon+\alpha\dr_x^{-1}z^\varepsilon\right)dx\right\vert \leqslant& \sqrt{\varepsilon}\int_\ER\dr_x\vert u\vert^2m\left(\dr_x^{-1}\zeta^\varepsilon+\alpha\dr_x^{-1}z^\varepsilon\right)dx \\&+ \sqrt{\varepsilon}\int_\ER\vert u\vert^2m\left(\zeta^\varepsilon+\alpha z^\varepsilon\right)dx.
\end{align*}
The second integral is easily bounded by $C+K_t$, for $t\in [0,\tau^\varepsilon_\delta]$, using similar arguments as above and in those in Lemma \ref{GronwallH},
while for the first integral: 
\begin{align*}
\sqrt{\varepsilon}\bigg\vert\int_\ER\dr_x\vert u\vert^2m&\left(\dr_x^{-1}\zeta^\varepsilon+\alpha\dr_x^{-1}z^\varepsilon\right)dx\bigg\vert \\&\leqslant 2\sqrt{\varepsilon}\left(\dbn{\dr_x^{-1}\zeta^\varepsilon}_{L^\infty}+\alpha\dbn{\dr_x^{-1}z^\varepsilon}_{L^\infty}\right)\dbn{u}_{L^\infty}\dbn{\dr_xu}_{L^2}\dbn{m}_{L^2} \\
&\leqslant C\dbn{u}_{L^2}^\frac{1}{2} \varepsilon^{\frac{1}{2}-\delta}\dbn{\dr_xu}_{L^2}^{\frac32}\dbn{m}_{L^2} 
{\leqslant C (K_t+\vep^{1-2\delta} K_t^{\frac32})} \\
& \leqslant{ C(1+K_t+\vep K_t^2),}
\end{align*}
again thanks to the conservation of $\|u\|_{L^2}$, and for $t\in [0,\tau^\varepsilon_\delta]$, recalling that $\delta \leqslant \frac18$.
The last term in \eqref{generateurcorrige} is also integrated by parts, and taking account of the fact that $z^\vep$ and $\zeta^\vep$
are real valued, it is bounded for $t \in [0,\tau^\varepsilon_\delta]$ by
\begin{align*}
 C\varepsilon &\dbn{u}_{L^2}\dbn{\dr_xu}_{L^2}\dbn{\dr_x\MM^{-1}\left(\dr_xz^\varepsilon\dr_x\MM^{-1}z^\varepsilon-\mathbb{E}_\nu\left[\dr_xz\dr_x\MM^{-1}z\right]\right)}_{L^\infty} \\
&\leqslant C_\phi \varepsilon K_t^\frac{1}{2}(1 + \dbn{z^\varepsilon}_{H^3\cap\dot{H}^{-2}}^2 +\dbn{\zeta^\varepsilon}_{H^2\cap\dot{H}^{-2}}^2) 
\leqslant C\vep^{1-2\delta} K_t^{\frac12}\\
&\leqslant C(1+K_t),
\end{align*}
thanks to Proposition \ref{coroborneLinf}, and the conservation of $\|u\|_{L^2}$.
Gathering all these estimates gives the conclusion.
\end{proof}
Propositions \ref{bornesHK} and \ref{boundpolynome}, together with the expression of the martingale terms \eqref{X_t} and \eqref{Y_t} 
allow us to state the two following energy estimates	which will be useful for the tightness.

\begin{prop} \label{borneespK}
Let  $u_0\in H^3(\ER),m_0\in H^2(\ER)\cap \dot{H}^{-1}(\ER), m_1\in H^1(\ER)\cap\dot{H}^{-1}(\ER)$, and let $(z^\vep(t),\zeta^\vep(t))$ be the unique
 solution of \eqref{equationzeps} satisfying $z^\vep(0)=\zeta^\vep(0)=0$, with $\phi \in \mathcal{L}_2(L^2, H^3\cap \dot{H}^{-3})$. Then, for any $T>0$,   there exists a constant 
$C(T)>0$, independent of $\varepsilon$, and a stopping time $\tau^\varepsilon$ such that $\mathbb P(\tau^\varepsilon \le T)$ converges to $0$ as $\vep$ tends to $0$, such that the 
solution $(u,m, \mu)$ of the system \eqref{systemeummu} given by Theorem  \ref{regular-solutions} satisfies 
$$\mathbb{E}\big[\sup_{t\leqslant \tau^\varepsilon\wedge T} K^2(t)\big] \leqslant C(T),$$
where $K(t)$ is defined in equation \eqref{K} with $u=u(t),m=m(t),\mu=\dr_tm(t)$. 
Moreover, if $V(t)=-\dr_x^{-1} \mu(t)$, then
$$\mathbb{E}\left[\dbn{\sqrt{\varepsilon}V}^2_{L^2(0,\tau^\varepsilon\wedge T;L^2)}\right] \leqslant C(T).$$ 
%where $V=-\dr_x^{-1}\dr_tm$.
\end{prop}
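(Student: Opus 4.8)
The plan is to run a stopped Gr\"onwall estimate on the modified energy $H^\vep$, combining the two-sided comparison of Proposition~\ref{bornesHK} with the generator bound of Proposition~\ref{boundpolynome} and the explicit martingale decomposition $dH^\vep_t=\mathcal L^\vep H^\vep_t\,dt+d(X_t+Y_t)$ established just before the statement. The one genuinely delicate point is the term $\vep K_t^2$ appearing in Proposition~\ref{boundpolynome}: it is \emph{quadratic} in the quantity $K_t$ we wish to control, so it cannot be fed directly into Gr\"onwall's lemma. The main obstacle is therefore to tame this quadratic term, which I would do through a bootstrap built on a second stopping time.

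Fix $\rho\in(0,1)$ and set
\[
\tau^\vep_K=\inf\{t:\ K(u(t),m(t),\mu(t))\geqslant\vep^{-\rho}\},\qquad \tau^\vep=\tau^\vep_\delta\wedge\tau^\vep_K,
\]
with $\tau^\vep_\delta$ as in \eqref{deftaudelta} and $\delta\leqslant\frac18$. Writing $G(t)=H^\vep(t)+\alpha\int_0^t\dbn{\sqrt{\vep}V(s)}_{L^2}^2\,ds$, integrating the evolution of $H^\vep$ and invoking Proposition~\ref{boundpolynome}, one gets for $t\leqslant\tau^\vep$
\[
G(t)\leqslant G(0)+\int_0^t\big(\vep K_s^2+BK_s+C\big)\,ds+X_t+Y_t.
\]
On $[0,\tau^\vep_K]$ one has $\vep K_s^2\leqslant\vep^{1-\rho}K_s\leqslant K_s$ for $\vep$ small (since $\rho<1$), so the quadratic term is absorbed into a linear one; Proposition~\ref{bornesHK} then gives $K_s\leqslant 4H^\vep(s)+C\leqslant 4G(s)+C$ because $G\geqslant H^\vep$. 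After relabelling constants this yields, for $t\leqslant\tau^\vep$,
\[
G(t)\leqslant G(0)+C_1\int_0^tG(s)\,ds+C_1T+X_t+Y_t.
\]

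For the martingale terms I would use the Burkholder--Davis--Gundy inequality. The bracket of $X_t$ from \eqref{X_t} is $\langle X\rangle_t=4\sum_k\int_0^t\big(\mathrm{Re}\int_\ER iu\,\dr_x\bar u\,\dr_x^{-1}(\phi e_k)\,dx\big)^2\,ds$, which via $H^1\hookrightarrow L^\infty$ and $\sum_k\dbn{\dr_x^{-1}\phi e_k}_{H^1}^2=\dbn{\phi}_{\mathcal L_2(L^2,L^2\cap\dot H^{-1})}^2$ is bounded by $C\dbn{u_0}_{L^2}^2\int_0^t\dbn{\dr_xu}_{L^2}^2\,ds\leqslant C\int_0^tK_s\,ds$, using conservation of the $L^2$-norm of $u$. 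For $Y_t$ from \eqref{Y_t}, the prefactor $\sqrt\vep$, the a priori bound $\dbn{z^\vep}+\dbn{\zeta^\vep}\leqslant\vep^{-\delta}$ valid on $[0,\tau^\vep_\delta]$, and the exponential decay of $S_\alpha$ make its bracket at most $C\vep^{1-2\delta}T\leqslant CT$ for $\delta\leqslant\frac18$. Setting $G^*(t)=\sup_{s\leqslant t\wedge\tau^\vep}G(s)$, taking the supremum, squaring, taking expectations, and applying BDG and Cauchy--Schwarz gives $\mathbb E[(G^*(t))^2]\leqslant C_2(T)+C_2\int_0^t\mathbb E[(G^*(s))^2]\,ds$. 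Since $G^*$ is bounded by a deterministic ($\vep$-dependent) constant on $[0,\tau^\vep]$, the left-hand side is finite, so Gr\"onwall yields $\mathbb E[(G^*(T))^2]\leqslant C(T)$. Combined with $K\leqslant 4G^*+C$ this gives $\mathbb E[\sup_{t\leqslant\tau^\vep\wedge T}K^2(t)]\leqslant C(T)$, and taking expectations in the displayed inequality at $t=\tau^\vep\wedge T$ (the martingales having zero mean, and $G\geqslant -C$) gives the bound on $\mathbb E[\dbn{\sqrt\vep V}^2_{L^2(0,\tau^\vep\wedge T;L^2)}]$.

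Finally, for $\PP(\tau^\vep\leqslant T)\to0$ I would split $\PP(\tau^\vep\leqslant T)\leqslant\PP(\tau^\vep_\delta\leqslant T)+\PP(\tau^\vep_K\leqslant T)$; the first term tends to $0$ by Proposition~\ref{propprocstationnaire}. For the second, on $\{\tau^\vep_K\leqslant T,\ \tau^\vep_\delta\geqslant\tau^\vep_K\}$ continuity of $K$ forces $\sup_{t\leqslant\tau^\vep\wedge T}K(t)=\vep^{-\rho}$, so by Markov's inequality this event has probability at most $\vep^{2\rho}\,\mathbb E[\sup_{t\leqslant\tau^\vep\wedge T}K^2]\leqslant C(T)\vep^{2\rho}\to0$, while the complementary part is again contained in $\{\tau^\vep_\delta\leqslant T\}$. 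This closes the bootstrap, with no circularity since the energy bound was proved only on the stopped interval $[0,\tau^\vep]$ and is then used to show that the cap $\tau^\vep_K$ is rarely reached.
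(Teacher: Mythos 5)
Your proof is correct and is essentially the paper's own argument: both rely on the decomposition $dH^\vep_t=\mathcal{L}^\vep H^\vep_t\,dt+d(X_t+Y_t)$ together with Propositions \ref{bornesHK} and \ref{boundpolynome}, both tame the quadratic term $\vep K_t^2$ by an auxiliary bootstrap stopping time (the paper stops when $\vep(H^\vep_t)^2\geqslant 1$, you stop when $K_t\geqslant\vep^{-\rho}$, equivalent in view of Proposition \ref{bornesHK}), and both conclude with a martingale maximal inequality, Gr\"onwall's lemma, and Markov's inequality combined with Proposition \ref{propprocstationnaire} to get $\PP(\tau^\vep\leqslant T)\to 0$ without circularity. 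The remaining differences are cosmetic: the paper applies It\^o's formula to $(H^\vep)^2$ and the maximal inequality of \cite[Theorem 3.14]{DPZ14} with a Young absorption, where you square the integrated inequality and invoke BDG on $G^*$ (your gloss of the bracket of $Y_t$ omits the $\dbn{u}_{L^4}^4$ factor kept in Lemma \ref{varquadXY}, but on the stopped interval it is controlled by $C(1+K_t)$ and feeds into the same Gr\"onwall loop as the $X$-bracket), and the paper extracts the $\sqrt{\vep}V$ estimate in a second pass rather than carrying $\alpha\int_0^t\dbn{\sqrt{\vep}V(s)}_{L^2}^2\,ds$ inside $G$ from the start.
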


The proof of Proposition \ref{borneespK} is postponed to the Appendix.

\section{Tightness of the processes}\label{tightness}
		
Let us fix $T>0$. Thanks to the bounds obtained in Proposition \ref{borneespK},  we are able to prove the tightness of the family
of processes indexed by $\vep$, and apply the Prokhorov and Skorohod Theorems.
		
Let $(u_0,m_0,m_1)$
and $(z^\vep,\zeta^\vep)$ be as in Proposition \ref{borneespK}. 
For $\varepsilon>0$, denote by $(u^\varepsilon,m^\varepsilon, \mu^\vep)$ the solution to the system \eqref{systemeummu} given by 
Theorem \ref{regular-solutions}. 
				We will use Aldous criterion (see \cite[Theorem 16.10]{Bil99}) for which we need a control of the modulus of continuity.  It leads us to consider the following integrals (we recall that $V^\varepsilon=-\dr_x^{-1}\mu^\varepsilon$):
		\begin{equation}\label{Mepsi_Vepsi}
		M^\varepsilon(t)=\int_0^tm^\varepsilon(s)ds, \qquad\qquad \mathcal{V}^\varepsilon(t)=\int_0^t V^\varepsilon(s)ds.
		\end{equation}
  		We also fix $\delta\leqslant\frac{1}{8}$ and introduce the space \begin{equation}\label{defE}
		E=\left\{f\in C(\ER,H^3\cap\dot{H} ^{-3}), \quad \sup_{t\in \ER}\frac{\dbn{f(t)}_{H^3\cap\dot{H}^{-3}}}{1+\vert t\vert^\delta}< \infty\right\}.
		\end{equation}
According to Proposition \ref{propprocstationnaire}, and more precisely to \eqref{borne z zeta}, the process $(z,\zeta)$, solution of 
\eqref{equationz2},  takes values a.s. in $E\times E$.   
Our aim is to prove the following Proposition.

\begin{prop}\label{tensionuplet}
		Let $(u_0,m_0,m_1)\in H^3\times(H^2\cap\dot{H}^{-1})\times(H^1\cap\dot{H}^{-1})$. 
		Denote by $(u^\varepsilon,m^\varepsilon,\mu^\vep)$ the solution of \eqref{systemeummu}, and $(z,\zeta)$ the solution of \eqref{equationz2}
		satisfying $z(0)=\zeta(0)=0$. Then the sequence $(u^\varepsilon,M^\varepsilon, \sqrt{\varepsilon}\mathcal{V}^\varepsilon,z,\zeta)$ where $M^\varepsilon,\mathcal{V}^\varepsilon$ are defined in \eqref{Mepsi_Vepsi} is tight in the space $\mathcal{C}^0([0,T],H^s_{loc})\times  \mathcal{C}^0([0,T],H^{-\sigma}_{loc})\times H^s([0,T],H^{-\sigma}_{loc})\times E\times E$ for any $s<1,\sigma>0$.
		\end{prop}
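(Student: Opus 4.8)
The plan is to establish tightness of each of the five marginals separately and then conclude by the standard fact that the product of finitely many tight families is tight on the product space (by a union bound on the complements of the component compacts). The marginal $(z,\zeta)$ is immediate: its law does not depend on $\varepsilon$, and by \eqref{borne z zeta} it is supported in $E\times E$, so a single law is tight there. For the two time-integrated components I would rely directly on Proposition \ref{borneespK}. Working on the event $\{\tau^\varepsilon>T\}$, whose probability tends to $1$ since $\mathbb P(\tau^\varepsilon\le T)\to 0$, the bound on $\mathbb E[\sup_{t\le\tau^\varepsilon\wedge T}K^2]$ gives a uniform bound on $m^\varepsilon$ in $L^\infty(0,T;L^2)$, so that $M^\varepsilon(t)-M^\varepsilon(s)=\int_s^t m^\varepsilon\,dr$ is uniformly Lipschitz in time with values in a fixed ball of $L^2$; since $L^2\hookrightarrow\hookrightarrow H^{-\sigma}_{loc}$ for $\sigma>0$, Arzel\`a-Ascoli yields tightness of $M^\varepsilon$ in $C([0,T],H^{-\sigma}_{loc})$. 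Likewise the second bound of Proposition \ref{borneespK} controls $\sqrt\varepsilon V^\varepsilon$ in $L^2(0,T;L^2)$, hence $\sqrt\varepsilon\mathcal V^\varepsilon$ in $H^1(0,T;L^2)$, and a compactness lemma of Aubin-Lions type gives tightness in $H^s([0,T],H^{-\sigma}_{loc})$, $s<1$. Throughout, passing from the stopped processes to the genuine ones costs only $\mathbb P(\tau^\varepsilon\le T)\to 0$.

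The delicate marginal is $u^\varepsilon$ in $C([0,T],H^s_{loc})$, $s<1$. Spatial compactness is not the issue: Proposition \ref{borneespK} bounds $u^\varepsilon$ in $L^\infty(0,T;H^1)$ (the $L^2$-norm being conserved), and $H^1\hookrightarrow\hookrightarrow H^s_{loc}$. The difficulty is the time regularity. From the first line of \eqref{systemeummu}, $\partial_t u^\varepsilon=i\partial_x^2u^\varepsilon-imu^\varepsilon-\tfrac{i}{\sqrt\varepsilon}z^\varepsilon u^\varepsilon$, so the drift carries a term of size $\varepsilon^{-1/2}$ and $u^\varepsilon$ cannot be equicontinuous in time in any naive sense. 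I would verify Aldous's criterion: for stopping times $\sigma^\varepsilon\le T$ and $\theta\to0$, one splits
$$u^\varepsilon(\sigma^\varepsilon+\theta)-u^\varepsilon(\sigma^\varepsilon)=\int_{\sigma^\varepsilon}^{\sigma^\varepsilon+\theta}\!\!\big(i\partial_x^2u^\varepsilon-imu^\varepsilon\big)\,dr-\tfrac{i}{\sqrt\varepsilon}\int_{\sigma^\varepsilon}^{\sigma^\varepsilon+\theta}\!\!z^\varepsilon u^\varepsilon\,dr.$$
The first integral is $O(\theta)$ in $H^{-1}$ uniformly in $\varepsilon$, using the $L^\infty(0,T;H^1)$ bound and $L^2\hookrightarrow H^{-1}$; everything reduces to the second, singular integral.

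To control $\tfrac{1}{\sqrt\varepsilon}\int z^\varepsilon u^\varepsilon$ I would use exactly the two-corrector averaging of the perturbed test function method already set up in Section \ref{sectionenergie}. With $\psi=\mathcal M^{-1}z=(\partial_x^2)^{-1}(\zeta+\alpha z)$ from Lemma \ref{inverse1}, and since $\tfrac1\varepsilon\mathcal M$ generates $(z^\varepsilon,\zeta^\varepsilon)$, It\^o's formula gives $z^\varepsilon\,dr=\varepsilon\,d[\psi(z^\varepsilon,\zeta^\varepsilon)]-\varepsilon\,dN_r$ for a martingale $N$. Substituting and integrating by parts in time transfers the factor $\varepsilon^{-1/2}$ onto boundary terms $\varepsilon^{1/2}[u^\varepsilon\psi]$, onto a stochastic integral $\varepsilon^{1/2}\!\int u^\varepsilon\,dN_r$, and onto $\varepsilon^{1/2}\!\int\psi\,\partial_t u^\varepsilon\,dr$; in this last term the only dangerous contribution is again a product $z^\varepsilon\mathcal M^{-1}z^\varepsilon$, whose $\nu$-average is a bounded operator (contributing an $O(\theta)$ term) and whose centered fluctuation is absorbed by the second corrector of Lemma \ref{inverse2} and Proposition \ref{coroborneLinf}. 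On $\{t\le\tau^\varepsilon_\delta\}$, where $z^\varepsilon,\zeta^\varepsilon$ are $O(\varepsilon^{-\delta})$, this bounds the singular integral by $O(\varepsilon^{1/2-\delta})$ plus, via the Burkholder-Davis-Gundy inequality, a martingale increment of order $\sqrt\theta$ uniformly in $\varepsilon$. Hence the full increment tends to $0$ as $\theta\to0$ uniformly in $\varepsilon$, giving Aldous's condition and, together with the $L^\infty(0,T;H^1)$ bound and interpolation between $H^1$ and $H^{-1}$, tightness of $u^\varepsilon$ in $C([0,T],H^s_{loc})$. I expect this averaging estimate for the oscillatory forcing $\varepsilon^{-1/2}z^\varepsilon u^\varepsilon$ to be the main obstacle; the remaining marginals and the final assembly are routine once the a priori bounds of Proposition \ref{borneespK} are in hand.
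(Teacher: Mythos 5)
Your proposal is correct, and at its core it uses the same machinery as the paper: Aldous's criterion for $u^\varepsilon$; the a priori bounds of Proposition \ref{borneespK} (with the stopping times $\tau^\varepsilon$, $\tau^\varepsilon_\delta$ and the cost $\mathbb{P}(\tau^\varepsilon\leqslant T)\to 0$) both for the uniform-boundedness half of the criterion and for the integrated components $M^\varepsilon$, $\sqrt{\varepsilon}\mathcal{V}^\varepsilon$; the $\varepsilon$-independence of $(z,\zeta)$ together with \eqref{borne z zeta} for the last two components; and, crucially, the two correctors $\mathcal{M}^{-1}z$ of Lemma \ref{inverse1} and $\mathcal{M}^{-1}\big(z\mathcal{M}^{-1}z-\mathbb{E}_\nu[z\mathcal{M}^{-1}z]\big)$ of Lemma \ref{inverse2} and Proposition \ref{coroborneLinf} to average out the $\varepsilon^{-1/2}$ drift. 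Where you genuinely diverge is in how the averaging is implemented for the time increments of $u^\varepsilon$. The paper never estimates the increment pathwise: it applies the perturbed test function method to the \emph{scalar} functionals $\varphi(u)=\dbn{u}_{H^{-1/2}}^2$ (Lemma \ref{lemmaAldous1}) and $\varphi(u)=\langle u,h\rangle_{H^{-1/2}}$ with $h=u^\varepsilon(\tau\wedge\tau^\varepsilon)$ (Lemma \ref{lemmaaldous2}), bounds $\mathcal{L}^\varepsilon\varphi^\varepsilon$ uniformly on $[0,\tau^\varepsilon_\delta]$, and takes expectations, so that all martingale terms vanish identically; the increment $\mathbb{E}\dbn{u^\varepsilon((\tau+\bar\delta)\wedge\tau^\varepsilon)-u^\varepsilon(\tau\wedge\tau^\varepsilon)}_{H^{-1/2}}^2$ is then recovered by polarization and Markov's inequality closes the argument. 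You instead substitute $z^\varepsilon\,dr=\varepsilon\,d\big[\psi(z^\varepsilon,\zeta^\varepsilon)\big]-\varepsilon\,dN_r$ directly in the integral formula for $u^\varepsilon(\sigma+\theta)-u^\varepsilon(\sigma)$ and integrate by parts in time: your boundary terms $\sqrt{\varepsilon}\,u^\varepsilon\psi$ are exactly the paper's first corrector evaluated at the endpoints, and your treatment of the residual $z^\varepsilon\mathcal{M}^{-1}z^\varepsilon$ term (split into $\nu$-average plus centered fluctuation, the latter absorbed by the second corrector) reproduces the paper's computation. What your route costs is a genuine Burkholder--Davis--Gundy estimate on the surviving stochastic integral $\int u^\varepsilon(\dr_x^2)^{-1}\phi\,dW$ (of order $\sqrt{\theta}$ uniformly in $\varepsilon$, using $\phi\in\mathcal{L}_2(L^2,H^3\cap\dot{H}^{-4})$), which the paper's expectation-based argument sidesteps entirely; what it buys is a more direct, quantitative increment bound, and the choice of $H^{-1}$ rather than $H^{-1/2}$ is equally sufficient after interpolation with the $H^1$ bound. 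For the remaining components your argument matches the paper's, which explicitly notes that no perturbed-test-function correction is needed for $M^\varepsilon$ and $\sqrt{\varepsilon}\mathcal{V}^\varepsilon$ precisely because the time integration makes Proposition \ref{borneespK} sufficient, as in your Lipschitz and $H^1(0,T;L^2)$ reasoning.
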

		In order to prove this result, we first study the process $u^\varepsilon$, for which we will apply Aldous criterion,
using again the Perturbed Test function method.
More precisely, we show that the process $(u^\varepsilon)_\varepsilon$ is tight in the space $\mathcal{C}^0([0,T],H^s_{loc})$.  
		 We need two statements, namely Lemma \ref{hypothese1} and Proposition \ref{hypothese2}.

\begin{lemma}\label{hypothese1}
Let $\varepsilon>0$ and denote by $(u^\varepsilon,m^\varepsilon,\mu^\vep)$ the solution of \eqref{systemeummu} given by 
Theorem \ref{regular-solutions}.  Then 
 \begin{equation}\label{condAldous1}
\lim_{R\to\infty}\limsup_{\varepsilon\to 0}\mathbb{P}\Big(\sup_{t\in[0,T]}\dbn{u^\varepsilon(t)}_{H^1}>R\Big)=0.
\end{equation}
\end{lemma}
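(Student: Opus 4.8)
The plan is to reduce the control of the $H^1$ norm of $u^\varepsilon$ to the control of the modified energy $K$ furnished by Proposition \ref{borneespK}, using the conservation of the $L^2$ norm to absorb the low-frequency part. First I would note that, since the mass $\dbn{u^\varepsilon(t)}_{L^2}=\dbn{u_0}_{L^2}$ is conserved (as recalled in the proof of Lemma \ref{GronwallH}), one has for every $t$
$$
\dbn{u^\varepsilon(t)}_{H^1}^2=\dbn{u_0}_{L^2}^2+\dbn{\dr_xu^\varepsilon(t)}_{L^2}^2\leqslant \dbn{u_0}_{L^2}^2+K(t),
$$
where $K(t)=K(u^\varepsilon(t),m^\varepsilon(t),\mu^\varepsilon(t))$ is defined in \eqref{K}. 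Thus it suffices to bound the probability that $\sup_{t\in[0,T]}K(t)$ is large.

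The point requiring care is that the uniform estimate of Proposition \ref{borneespK} holds only up to the stopping time $\tau^\varepsilon$, so I would split according to whether $\tau^\varepsilon$ occurs before $T$. Writing
$$
\mathbb{P}\Big(\sup_{t\in[0,T]}\dbn{u^\varepsilon(t)}_{H^1}>R\Big)\leqslant \mathbb{P}\Big(\sup_{t\in[0,T]}\dbn{u^\varepsilon(t)}_{H^1}>R,\ \tau^\varepsilon>T\Big)+\mathbb{P}(\tau^\varepsilon\leqslant T),
$$
and observing that on $\{\tau^\varepsilon>T\}$ we have $\tau^\varepsilon\wedge T=T$, hence $\sup_{t\in[0,T]}K(t)=\sup_{t\leqslant \tau^\varepsilon\wedge T}K(t)$, the first term is contained, via the inequality above, in the event $\{\sup_{t\leqslant \tau^\varepsilon\wedge T}K(t)>R^2-\dbn{u_0}_{L^2}^2\}$.

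For $R$ large enough that $R^2-\dbn{u_0}_{L^2}^2>0$, Markov's inequality together with the bound $\mathbb{E}[\sup_{t\leqslant\tau^\varepsilon\wedge T}K^2(t)]\leqslant C(T)$ from Proposition \ref{borneespK} controls this last quantity by $C(T)/(R^2-\dbn{u_0}_{L^2}^2)^2$, uniformly in $\varepsilon$. Since moreover $\mathbb{P}(\tau^\varepsilon\leqslant T)\to 0$ as $\varepsilon\to 0$, taking first $\limsup_{\varepsilon\to 0}$ and then $R\to\infty$ yields \eqref{condAldous1}.

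I do not expect a genuine obstacle here: the only real subtlety is the bookkeeping with the stopping time $\tau^\varepsilon$, and Proposition \ref{borneespK} has been tailored precisely to supply both ingredients needed, namely the uniform-in-$\varepsilon$ second-moment bound on $K$ and the vanishing of $\mathbb{P}(\tau^\varepsilon\leqslant T)$. Everything else is a direct consequence of the conservation of mass and of the a priori bound already established.
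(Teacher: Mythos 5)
Your proposal is correct and follows essentially the same route as the paper: splitting on the event $\{\tau^\varepsilon>T\}$, invoking the uniform second-moment bound on $K$ from Proposition \ref{borneespK} via Markov's inequality, and using the convergence in probability of $\tau^\varepsilon$ to $T$. The paper's version is merely more compact, absorbing the mass-conservation step and the Markov estimate into a single displayed bound of the form $C(T)/R+\mathbb{P}(\tau^\varepsilon<T)$.
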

\begin{proof}
This is a simple consequence of Proposition \ref{borneespK}, since
\begin{equation}
\begin{split}
\PP\Big(\sup_{t\in[0,T]}\dbn{u^\varepsilon(t)}_{H^1}^2>R\Big) \leqslant& \PP\Big(\sup_{t\in[0,\tau^\varepsilon]}\dbn{u^\varepsilon(t)}_{H^1}^2>R\Big) + \PP\left(\tau^\varepsilon <T\right) \\
&\leqslant \frac{C(T)}{R} + \PP\left(\tau^\varepsilon <T\right),
\end{split}
\end{equation} 
for some constant $C$ independent of $\vep$, and $\tau^\vep
$ converges to $T$ in probability as $\vep$ tends to $0$.
\end{proof}

\begin{prop}\label{hypothese2}
Let $\varepsilon>0$ and denote by $(u^\varepsilon,m^\varepsilon,\mu^\vep)$ the solution of \eqref{systemeummu} given by Theorem \ref{regular-solutions}.  Then for every $\lambda,\eta$, there exist $\delta_0,\varepsilon_0$ such that for $\bar{\delta}<\delta_0$ and $\varepsilon<\varepsilon_0$, if $\tau$ is a stopping time with $\tau \leqslant T$ a.s.  then 
\begin{equation}\label{condAldous2}
\PP\big(\dbn{u^\varepsilon(\tau+\bar{\delta})-u^\varepsilon(\tau)}_{H^{-\frac{1}{2}}}>\lambda\big)\leqslant\eta.
\end{equation}
\end{prop}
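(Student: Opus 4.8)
The plan is to estimate the increment $u^\varepsilon(\tau+\bar\delta)-u^\varepsilon(\tau)$ in the weak norm $H^{-1/2}$ by testing against an arbitrary $\psi\in H^{1/2}$ and integrating the Schr\"odinger equation \eqref{systemeummu} in mild (or weak) form over $[\tau,\tau+\bar\delta]$. Writing $i\dr_t u^\varepsilon=-\dr_x^2u^\varepsilon+(m^\varepsilon+\frac{1}{\sqrt\varepsilon}z^\varepsilon)u^\varepsilon$, one has
\begin{equation}\label{incru}
u^\varepsilon(\tau+\bar\delta)-u^\varepsilon(\tau)=i\int_\tau^{\tau+\bar\delta}\dr_x^2u^\varepsilon\,ds-i\int_\tau^{\tau+\bar\delta}m^\varepsilon u^\varepsilon\,ds-\frac{i}{\sqrt\varepsilon}\int_\tau^{\tau+\bar\delta}z^\varepsilon u^\varepsilon\,ds.
\end{equation}
The first two terms are harmless: the dispersive term maps $H^{3/2}$ bounds into $H^{-1/2}$ and is controlled by $\bar\delta\sup_s\|u^\varepsilon(s)\|_{H^{3/2}}$, while the coupling term $m^\varepsilon u^\varepsilon$ is controlled using the bound on $K_t$ from Proposition \ref{borneespK}; both contributions are $O(\bar\delta)$ on the good event $\{\tau^\varepsilon\ge T\}$. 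The genuine obstacle is the singular term $\frac{1}{\sqrt\varepsilon}\int z^\varepsilon u^\varepsilon\,ds$, which a priori diverges as $\varepsilon\to0$ because of the $\varepsilon^{-1/2}$ prefactor and the growth of $z^\varepsilon$.

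To handle the singular term I would apply the Perturbed Test Function method exactly as in Section \ref{sectionenergie}, but now to the (vector-valued, or tested) observable $\varphi(u)=(u,\psi)$ rather than to the energy. Because $\MM^{-1}z=(\dr_x^2)^{-1}(\zeta+\alpha z)$ by Lemma \ref{inverse1}, the first corrector $\sqrt\varepsilon\,\varphi_1$ with $\MM\varphi_1=D_u\varphi(iz^\varepsilon u)$ exactly cancels the $\varepsilon^{-1/2}$ divergence. Concretely, setting
$$\varphi^\varepsilon(u,z^\varepsilon,\zeta^\varepsilon)=(u,\psi)+\sqrt\varepsilon\,\big(iu(\dr_x^2)^{-1}(\zeta^\varepsilon+\alpha z^\varepsilon),\psi\big),$$
It\^o's formula gives $d\varphi^\varepsilon=\mathcal L^\varepsilon\varphi^\varepsilon\,dt+d(\text{mart})$, where, by construction, $\mathcal L^\varepsilon\varphi^\varepsilon$ no longer contains any negative power of $\varepsilon$ and is controlled on $[0,\tau^\varepsilon_\delta]$ by the quantities $K_t$ and $\|z^\varepsilon\|$, $\|\zeta^\varepsilon\|$, which are polynomially bounded there by \eqref{deftaudelta}. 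Thus
$$(u^\varepsilon(\tau+\bar\delta),\psi)-(u^\varepsilon(\tau),\psi)=\big(\varphi^\varepsilon(\tau+\bar\delta)-\varphi^\varepsilon(\tau)\big)-\sqrt\varepsilon\,\big[\varphi_1\big]_\tau^{\tau+\bar\delta},$$
and the corrector boundary term is $O(\sqrt\varepsilon)$ uniformly in $\psi$ with $\|\psi\|_{H^{1/2}}\le1$ (using $H^1\hookrightarrow L^\infty$ and Proposition \ref{borneespK}), while the drift contributes $O(\bar\delta)$ and the martingale part is handled by its quadratic variation, giving an $L^2$ increment of size $O(\bar\delta^{1/2})$ via Doob and It\^o isometry.

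Finally I would assemble the estimate by taking the supremum over $\psi$ in the unit ball of $H^{1/2}$ to recover the $H^{-1/2}$ norm, then combine the $O(\bar\delta)$, $O(\bar\delta^{1/2})$ and $O(\sqrt\varepsilon)$ bounds. Running everything on the event $\{\tau^\varepsilon_\delta\ge T\}$, whose complement has probability tending to $0$ by Proposition \ref{propprocstationnaire}, one gets
$$\PP\big(\|u^\varepsilon(\tau+\bar\delta)-u^\varepsilon(\tau)\|_{H^{-1/2}}>\lambda\big)\le \PP(\tau^\varepsilon_\delta<T)+\frac{1}{\lambda}\,\mathbb E\big[\mathbf 1_{\tau^\varepsilon_\delta\ge T}\|\cdots\|_{H^{-1/2}}\big]\le\eta$$
by first choosing $\varepsilon_0$ small so that $\PP(\tau^\varepsilon_\delta<T)$ and the $O(\sqrt\varepsilon)$ contribution are below $\eta/2$, and then $\delta_0$ small so that the $O(\bar\delta)+O(\bar\delta^{1/2})$ terms are below $\eta/2$. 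The main difficulty, and the reason $H^{-1/2}$ rather than a stronger norm is used, is the singular coupling term: the whole point is that the perturbed test function cancellation trades the uncontrollable $\varepsilon^{-1/2}$ factor for a harmless $\sqrt\varepsilon$ boundary remainder, and one must verify that this works uniformly over test functions $\psi$ so as to close the weak-norm estimate.
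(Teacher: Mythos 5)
Your high-level strategy (perturbed test function to cancel the $\varepsilon^{-1/2}$ term, working on the event $\{\tau^\varepsilon_\delta\geqslant T\}$, then Markov) is the same as the paper's, but your execution has a genuine gap: you stop at the \emph{first} corrector. After adding $\sqrt{\varepsilon}\varphi_1$ to $\varphi(u)=(u,\psi)$, the generator still contains the order-zero term $\left(u\,z^\varepsilon\MM^{-1}z^\varepsilon,\psi\right)$, and on $[0,\tau^\varepsilon_\delta]$ this is only bounded pathwise by $C\varepsilon^{-2\delta}$, since each factor is controlled by $\varepsilon^{-\delta}$ via \eqref{deftaudelta}. Your claim that $\mathcal{L}^\varepsilon\varphi^\varepsilon$ ``no longer contains any negative power of $\varepsilon$'' is therefore false in the relevant uniform sense: the drift contribution is $O(\bar{\delta}\,\varepsilon^{-2\delta})$, which blows up as $\varepsilon\to 0$ at fixed $\bar{\delta}$, and one cannot choose $\delta_0,\varepsilon_0$ to make it uniformly small as the Aldous criterion requires. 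This is exactly why the paper introduces the \emph{second} corrector $\varepsilon\varphi_2$ (see \eqref{secondcorr} and the proof of Lemma \ref{lemmaAldous1}): it recenters $z^\varepsilon\MM^{-1}z^\varepsilon$ at its invariant mean $\mathbb{E}_\nu\left[z\MM^{-1}z\right]$, which is bounded by Proposition \ref{coroborneLinf}, leaving only remainders of size $\varepsilon^{\frac12-3\delta}$ and $\varepsilon^{1-2\delta}$ that vanish for $\delta\leqslant\frac18$.

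Two further points would need repair even with the second corrector added. First, your bound on the dispersive term uses $\sup_s\dbn{u^\varepsilon(s)}_{H^{3/2}}$, but Proposition \ref{borneespK} only gives uniform control of $K_t$, i.e.\ of $\dbn{u^\varepsilon}_{H^1}$; testing $\dr_x^2u^\varepsilon$ against $\psi\in H^{1/2}$ is thus not controlled. The paper avoids this by building the smoothing operator $(1-\dr_x^2)^{-\frac12}$ into the test functions $\varphi(u)=\dbn{u}_{H^{-1/2}}^2$ and $\varphi(u)=\langle u,h\rangle_{H^{-1/2}}$, so that $\langle(1-\dr_x^2)^{-\frac12}u,\dr_x^2u\rangle$ is bounded by $\dbn{u}_{H^1}^2$. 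Second, your final step takes a supremum over $\psi$ in the unit ball of $H^{1/2}$ inside the expectation, which is not justified from fixed-$\psi$ martingale estimates; the paper sidesteps both the supremum and any quadratic-variation estimate by expanding
$\dbn{u^\varepsilon((\tau+\bar{\delta})\wedge\tau^\varepsilon)-u^\varepsilon(\tau\wedge\tau^\varepsilon)}_{H^{-1/2}}^2$
into the difference of squared norms (Lemma \ref{lemmaAldous1}) plus a cross term with the random, $\mathcal{F}_\tau$-measurable $h=u^\varepsilon(\tau\wedge\tau^\varepsilon)$ (Lemma \ref{lemmaaldous2}), where the martingale parts vanish in expectation by optional stopping.
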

In order to prove Proposition \ref{hypothese2}, we will need two lemmas, whose proofs will use the Perturbed Test Function method. 

\begin{lemma}\label{lemmaAldous1}
Let $\tau^\vep$ be as in Proposition \ref{borneespK}.
For any $ \eta>0$ ,  there exist $\delta_0,\varepsilon_0$ such that for any bounded stopping time $\tau$,  for $\bar{\delta}<\delta_0$ and $\varepsilon<\varepsilon_0$ we have
\begin{equation}
\mathbb{E}\Big[\dbn{u^\varepsilon((\tau+\bar{\delta})\wedge\tau^\varepsilon)}_{H^{-\frac{1}{2}}}^2-\dbn{u^\varepsilon(\tau\wedge\tau^\varepsilon)}_{H^{-\frac{1}{2}}}^2\Big] \leqslant\eta.
\end{equation}
\end{lemma}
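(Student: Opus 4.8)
The plan is to run the perturbed test function method of Section~\ref{sectionenergie} with the functional $\varphi(u)=\dbn{u}_{H^{-1/2}}^2=(Ju,u)$, where $J=(1-\dr_x^2)^{-1/2}$ and $(\cdot,\cdot)$ is the real $L^2$ inner product. The reason for choosing this negative-order norm is a crucial simplification of the leading, $\vep$-independent part of the generator: since $D_u\varphi(h)=2(Ju,h)$ and $J$ commutes with $\dr_x^2$, a computation in Fourier variables shows $D_u\varphi(i\dr_x^2u)=0$, so that in $\mathcal{L}^\vep\varphi$ only the coupling term $-2(Ju,imu)$ and the singular term $-\tfrac{2}{\sqrt\vep}(Ju,iz^\vep u)$ survive. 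Following \eqref{firstcorr}--\eqref{secondcorr} and Lemma~\ref{inverse1}, I would introduce the correctors $\varphi_1=D_u\varphi(iu\MM^{-1}z)=2\big(Ju,\,iu(\dr_x^2)^{-1}(\zeta+\alpha z)\big)$ and $\varphi_2=\MM^{-1}\big(D_u\varphi_1(iuz)-\mathbb{E}_\nu[D_u\varphi_1(iuz)]\big)$, and set $\varphi^\vep=\varphi+\sqrt\vep\,\varphi_1+\vep\,\varphi_2$, so that by construction $\MM\varphi_1$ removes the $\vep^{-1/2}$ term and $\MM\varphi_2$ removes the centred term of order $\vep^0$, exactly as in Proposition~\ref{boundpolynome}.

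Writing $X_t=(u(t),m(t),\mu(t),z^\vep(t),\zeta^\vep(t))$, I would next use the martingale identity. Stopped at $\tau^\vep$ (which we take to be $\le\tau^\vep_\delta$), the process $N_t=\varphi^\vep(X_{t\wedge\tau^\vep})-\varphi^\vep(X_0)-\int_0^{t\wedge\tau^\vep}\mathcal{L}^\vep\varphi^\vep\,ds$ is a genuine martingale, thanks to the moment bounds of Proposition~\ref{borneespK}. Applying optional stopping at the bounded times $\tau$ and $\tau+\bar\delta$, and setting $t_1=\tau\wedge\tau^\vep$, $t_2=(\tau+\bar\delta)\wedge\tau^\vep$, gives $\mathbb{E}[\varphi^\vep(X_{t_2})-\varphi^\vep(X_{t_1})]=\mathbb{E}\int_{t_1}^{t_2}\mathcal{L}^\vep\varphi^\vep\,ds$. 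Since $\varphi^\vep-\varphi=\sqrt\vep\,\varphi_1+\vep\,\varphi_2$, the target quantity becomes
\begin{equation*}
\mathbb{E}\big[\dbn{u^\vep(t_2)}_{H^{-1/2}}^2-\dbn{u^\vep(t_1)}_{H^{-1/2}}^2\big]=\mathbb{E}\int_{t_1}^{t_2}\mathcal{L}^\vep\varphi^\vep\,ds-\mathbb{E}\big[(\sqrt\vep\varphi_1+\vep\varphi_2)(X_{t_2})\big]+\mathbb{E}\big[(\sqrt\vep\varphi_1+\vep\varphi_2)(X_{t_1})\big],
\end{equation*}
and it remains to estimate the three contributions.

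For the integral term I would follow the computation of Proposition~\ref{boundpolynome}: after the two correctors, the remaining terms of $\mathcal{L}^\vep\varphi^\vep$ carry no negative power of $\vep$ up to $\tau^\vep_\delta$, yielding $|\mathcal{L}^\vep\varphi^\vep|\le C(1+K_t^2)$, the $O(K_t)$ part coming from $(Ju,imu)$ and from the mean term $\mathbb{E}_\nu[D_u\varphi_1(iuz)]$ (estimated by Gagliardo--Nirenberg, conservation of $\dbn{u}_{L^2}$, and $\dbn{z^\vep}+\dbn{\zeta^\vep}\le\vep^{-\delta}$ from \eqref{deftaudelta}), and the $O(\vep K_t^2)$ part being the analogue of the last term of \eqref{generateurcorrige}, controlled by Proposition~\ref{coroborneLinf}. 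As $[t_1,t_2]$ has length at most $\bar\delta$, Proposition~\ref{borneespK} then gives $\mathbb{E}\int_{t_1}^{t_2}|\mathcal{L}^\vep\varphi^\vep|\,ds\le\bar\delta\,\mathbb{E}\big[\sup_{[0,\tau^\vep\wedge T]}C(1+K_t^2)\big]\le C\bar\delta$. For the corrector terms, using the smoothing of $J$, the embedding $H^1\hookrightarrow L^\infty$, conservation of $\dbn{u}_{L^2}$, \eqref{deftaudelta}, and Proposition~\ref{coroborneLinf}, I would obtain at $t_i\le\tau^\vep_\delta$ the bounds $|\sqrt\vep\varphi_1|\le C\vep^{1/2-\delta}K_t^{1/2}$ and $|\vep\varphi_2|\le C\vep^{1-2\delta}(1+K_t)$; taking expectations and using Proposition~\ref{borneespK} once more, both are $\le C(\vep^{1/2-\delta}+\vep^{1-2\delta})\to0$, since $\delta\le\frac18$. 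Choosing $\bar\delta$ small enough to make the integral term at most $\eta/2$, and then $\vep_0$ small enough to make the corrector terms at most $\eta/2$, yields the claim.

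I expect the main obstacle to be the first step: the explicit computation of $\mathcal{L}^\vep\varphi^\vep$ and the verification that, after both correctors, every surviving term is bounded by an integrable power of $K_t$ with no negative power of $\vep$. This is the exact analogue of the already delicate estimate in Proposition~\ref{boundpolynome}, and the careful bookkeeping of the quadratic-in-$(z^\vep,\zeta^\vep)$ contributions generated by $D_u\varphi_1(iz^\vep u)$ --- which are precisely what the second corrector $\varphi_2$ and Proposition~\ref{coroborneLinf} are designed to absorb --- is where the real work lies.
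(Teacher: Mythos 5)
Your proposal is correct and follows essentially the same route as the paper's proof: the same test function $\varphi(u)=\dbn{u}_{H^{-1/2}}^2$, the same two correctors built from \eqref{firstcorr}--\eqref{secondcorr} and Lemma \ref{inverse1}, the same martingale identity with optional stopping at $\tau\wedge\tau^\vep$ and $(\tau+\bar\delta)\wedge\tau^\vep$, and the same conclusion by choosing first $\bar\delta$ then $\vep$ small. The only cosmetic difference is that you retain bounds of the form $C(1+K_t^2)$ and close with the moment estimate of Proposition \ref{borneespK}, whereas the paper turns these into almost sure bounds on $[0,\tau^\vep]$ using the definitions of $\tau^\vep$ and $\tau^\vep_\delta$; both versions are valid.
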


\begin{proof}
We apply the perturbed test function method to $\varphi(u)=\dbn{u}_{H^{-\frac{1}{2}}}^2$, for $u\in H^3$.  We first compute the infinitesimal generator $\mathcal{L}^\varepsilon$ applied to $\varphi$ and we get 
\begin{equation}
\mathcal{L}^\varepsilon\varphi(u,m,z^\vep)=2\langle (1-\dr_x^2)^{-\frac{1}{2}}u, i\dr_x^2u-im u\rangle + \frac{2}{\sqrt{\varepsilon}}\langle (1-\dr_x^2)^{-\frac{1}{2}}u, -iz^\varepsilon u\rangle.
\end{equation}
As above, in order to cancel the term of order $\vep^{-1/2}$, we use a first corrector 
\begin{equation} \label{phi1}
\begin{split}
\varphi_1(u,z^\vep,\zeta^\vep)&=2\langle (1-\dr_x^2)^{-\frac{1}{2}}u, iu\MM^{-1}z^\varepsilon \rangle, 
\end{split}
\end{equation}
which is easily bounded on $[0,\tau^\vep_\delta]$ by $C(T) \vep^{-\delta}$, thanks to Lemma \ref{inverse1}, Lemma \ref{deftaudelta}, and
the conservation of $\|u^\vep(t)\|_{L^2}$.
Applying then the infinitesimal generator $\mathcal{L}^\varepsilon$ to $\varphi(u)+\sqrt{\varepsilon}\varphi_1$ we get
\begin{align*}
\mathcal{L}^\varepsilon(\varphi+\sqrt{\varepsilon}\varphi_1)&(u,m,z^\vep,\zeta^\vep)= 2\langle (1-\dr_x^2)^{-\frac{1}{2}}u, i\dr_x^2u-imu\rangle \\
&+ 2\langle(1-\dr_x^2)^{-\frac{1}{2}}u,uz^\varepsilon \MM^{-1}z^\varepsilon \rangle - 2\langle (1-\dr_x^2)^{-\frac{1}{2}}(z^\varepsilon u),u\MM^{-1}z^\varepsilon \rangle  \\
&+2\sqrt{\varepsilon}\left[\langle(1-\dr_x^2)^{-\frac{1}{2}}u,(m^\varepsilon u-\dr_x^2u)\MM^{-1}z^\varepsilon \rangle \right.\\&\left.+\langle (1-\dr_x^2)^{-\frac{1}{2}}(\dr_x^2u-m^\varepsilon u), u\MM^{-1}z^\varepsilon \rangle\right].
\end{align*}
Here again, we introduce a second corrector $\varphi_2$ in order to control the terms of order $0$:
\begin{align*}
\varphi_2(u,z^\vep,\zeta^\vep) &= -2\langle(1-\dr_x^2)^{-\frac{1}{2}}u, u\MM^{-1}(z^\varepsilon \MM^{-1}z^\varepsilon -\mathbb{E}_\nu\left[z^\varepsilon \MM^{-1}z^\varepsilon \right])\rangle \\
+& 2\langle u,\MM^{-1}[(1-\dr_x^2)^{-\frac{1}{2}}(z^\varepsilon u)\MM^{-1}z^\varepsilon-\mathbb{E}_\nu\big[(1-\dr_x^2)^{-\frac{1}{2}}(z u)\MM^{-1}z \big])\rangle. 
\end{align*}
The two terms on the right-hand side of the above  equation are bounded for $t\in[0,\tau^\varepsilon_\delta]$ thanks to Proposition \ref{coroborneLinf} 
(or similar arguments as in the proof of Proposition \ref{coroborneLinf} for the second term), \eqref{deftaudelta} and the conservation of 
$\|u^\vep(t)\|_{L^2}$, and we get
\begin{equation} \label{estphi2}
\varphi_2(u^\vep,z^\vep,\zeta^\vep) \leqslant C(T)\varepsilon^{-2\delta}
\end{equation}
where $C$ depends on the $L^2$ norm of $u$ and the Hilbert-Schmidt norm of $\phi$.
Defining $\varphi^\varepsilon=\varphi+\sqrt{\varepsilon}\varphi_1+\varepsilon\varphi_2$,  we end up thanks to \eqref{phi1}, Lemma \ref{inverse1}
and \eqref{estphi2} with
\begin{equation} \label{estphieps}
\vert\varphi^\varepsilon(u^\varepsilon_t,z^\vep_t,\zeta^\vep_t)-\varphi(u^\varepsilon_t)\vert \leqslant C\varepsilon^{\frac{1}{2}-\delta}+C\varepsilon^{1-2\delta}, \text{ for all } t\in [0,\tau_\delta^\vep],
\end{equation} 
where $C$ depends on $T$,  $\|u_0\|_{L^2}$ and $\|\phi\|_{\mathcal{L}_2(L^2,H^3\cap\dot{H}^{-4})}$. 
We also compute
\begin{align*}
(\mathcal{L}^\varepsilon\varphi^\varepsilon)(u^\varepsilon,m^\vep,&z^\vep,\zeta^\vep) =  2\langle (1-\dr_x^2)^{-\frac{1}{2}}u^\varepsilon, i\dr_x^2u^\varepsilon-im^\varepsilon u^\varepsilon\rangle \\
&+ 2\mathbb{E}_\nu\left[\langle(1-\dr_x^2)^{-\frac{1}{2}}u^\varepsilon,u^\varepsilon z \MM^{-1}z \rangle - 2\langle (1-\dr_x^2)^{-\frac{1}{2}}(z^\varepsilon u^\varepsilon),u^\varepsilon\MM^{-1}z\rangle\right]  \\
&+2\sqrt{\varepsilon}\left[\langle(1-\dr_x^2)^{-\frac{1}{2}}u^\varepsilon,(m^\varepsilon u^\varepsilon-\dr_x^2u^\varepsilon)\MM^{-1}z^\varepsilon \rangle \right.\\&\left.+\langle i(1-\dr_x^2)^{-\frac{1}{2}}(\dr_x^2u^\varepsilon-m^\varepsilon u^\varepsilon), iu^\varepsilon\MM^{-1}z^\varepsilon \rangle\right] \\
&+\sqrt{\varepsilon} D_u\varphi_2(-iz^\varepsilon u^\varepsilon) +\varepsilon D_u\varphi_2(i\dr_x^2u^\varepsilon-im^\varepsilon u^\varepsilon).
\end{align*}
Proceeding as above, it is not difficult to prove that for $t\in [0,\tau^\vep_\delta]$,
\begin{align*}
\sqrt{\varepsilon}\vert D_u\varphi_2.(-iz^\varepsilon u^\varepsilon)\vert &
\leqslant C(T) \varepsilon^{\frac{1}{2}-3\delta},
\end{align*}
and
\begin{align*}
\varepsilon D_u\varphi_2.(i\dr_x^2u^\varepsilon-im^\varepsilon u^\varepsilon) 
& \leqslant C(T) (\dbn{u^\varepsilon}_{H^1}^2 + \dbn{m^\varepsilon }_{L^2}^2) \varepsilon^{1-2\delta},
\end{align*}
so that if $\tau^\vep$ is as in Proposition \ref{borneespK},
$$ \vert (\mathcal{L}^\varepsilon\varphi^\varepsilon)(u^\varepsilon,m^\vep,z^\vep,\zeta^\vep)\vert 
\leqslant  C(T)%\left(1+\dbn{u^\varepsilon}_{L^\infty([0,\tau^\varepsilon],H^1)}^2 + \dbn{m^\varepsilon}_{L^\infty([0,\tau^\varepsilon],L^2)}^2\right)
\left( 1+\varepsilon^{\frac{1}{2}-3\delta}+\varepsilon^{1-2\delta}\right).
$$
Now, since  
$$\varphi^\varepsilon(u^\varepsilon_t,z^\vep_t,\zeta^\vep_t)-\varphi^\varepsilon(u_0,z^\vep_0,\zeta^\vep_0)
-\int_0^t(\mathcal{L}^\varepsilon\varphi^\varepsilon)(u^\varepsilon_s,m^\vep_s,z^\vep_s,\zeta^\vep_s)ds$$ 
is a martingale, the above estimate and \eqref{estphieps}
lead to
\begin{align*}
\mathbb{E}\big[\varphi&(u^\varepsilon((\tau+\bar{\delta})\wedge\tau^\varepsilon))-\varphi(u^\varepsilon(\tau\wedge\tau^\varepsilon))\big] \\
\leqslant &\;  \mathbb{E}\left[\varphi^\varepsilon(u^\varepsilon((\tau+\bar{\delta})\wedge\tau^\varepsilon))- \varphi^\varepsilon(u^\varepsilon(\tau\wedge\tau^\varepsilon))\right]
+C\varepsilon^{\frac{1}{2}-\delta} + C\varepsilon^{1-2\delta} \\
\leqslant & \; C \bar{\delta}\left( 1+\varepsilon^{\frac{1}{2}-3\delta}+\varepsilon^{1-2\delta}\right)+C\varepsilon^{\frac{1}{2}-\delta}+ C\varepsilon^{1-2\delta}.
\end{align*}
Finally, for $\bar{\delta}$ and $\varepsilon$ small enough we get the conclusion of Lemma \ref{lemmaAldous1}, recalling that 
$\varphi(u)=\dbn{u}_{H^{-\frac{1}{2}}}^2$.
\end{proof}

The second lemma states a similar result.
\begin{lemma}\label{lemmaaldous2}
Let $\tau^\vep$ be as in Proposition \ref{borneespK}. There exist $\delta_0,\varepsilon_0$ such that for any bounded stopping time $\tau$,  for $\bar{\delta}<\delta_0$ and $\varepsilon<\varepsilon_0$,
\begin{equation}
\mathbb{E}\left[\langle u^\varepsilon((\tau+\bar{\delta})\wedge\tau^\varepsilon)-u^\varepsilon(\tau\wedge\tau^\varepsilon), u^\varepsilon(\tau\wedge\tau^\varepsilon)\rangle_{H^{-\frac{1}{2}}}\right] \leqslant \eta
\end{equation}
\end{lemma}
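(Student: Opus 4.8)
The plan is to mirror the proof of Lemma~\ref{lemmaAldous1}, replacing the quadratic functional $\dbn{u}_{H^{-1/2}}^2$ by its polarized, bilinear version. I would fix the bounded stopping time $\tau$, set $t_1=\tau\wedge\tau^\vep$ and $t_2=(\tau+\bar\delta)\wedge\tau^\vep$, and condition on $\mathcal{F}_{t_1}$, so that the element $v:=u^\vep(t_1)$ becomes a deterministic, frozen parameter. The perturbed test function method is then applied to
$$
\varphi(u)=\langle u,v\rangle_{H^{-\frac12}}=\langle (1-\dr_x^2)^{-\frac12}u,\,v\rangle,
$$
on which $\mathcal{L}^\vep$ acts only through the $u$-variable. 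The point of this choice is that $\varphi(u^\vep(t_2))-\varphi(u^\vep(t_1))=\langle u^\vep(t_2)-u^\vep(t_1),\,u^\vep(t_1)\rangle_{H^{-\frac12}}$ is exactly the increment whose expectation the lemma controls.

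The corrector construction runs parallel to Lemma~\ref{lemmaAldous1}. First I would compute
$$
\mathcal{L}^\vep\varphi(u,m,z^\vep)=\langle (1-\dr_x^2)^{-\frac12}(i\dr_x^2u-imu),v\rangle-\frac1{\sqrt\vep}\langle (1-\dr_x^2)^{-\frac12}(iz^\vep u),v\rangle,
$$
and cancel the singular $\vep^{-1/2}$ term with the first corrector $\varphi_1(u,z^\vep,\zeta^\vep)=\langle (1-\dr_x^2)^{-\frac12}(iu\,\MM^{-1}z^\vep),v\rangle$, where $\MM^{-1}z^\vep$ is given explicitly by Lemma~\ref{inverse1}. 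Computing $\mathcal{L}^\vep(\varphi+\sqrt\vep\varphi_1)$ leaves order-$\vep^0$ terms, which I would remove by a second corrector $\varphi_2$ of the type \eqref{secondcorr}, namely $\MM^{-1}$ applied to $D_u\varphi_1(iuz^\vep)$ minus its $\nu$-average, whose $L^\infty$ bound is supplied by Proposition~\ref{coroborneLinf}. Using the conservation $\dbn{u^\vep(t)}_{L^2}=\dbn{u_0}_{L^2}$ and $\dbn{v}_{L^2}=\dbn{u_0}_{L^2}$, together with the control of $z^\vep,\zeta^\vep$ up to $\tau^\vep_\delta$ from \eqref{deftaudelta}, I expect, exactly as in Lemma~\ref{lemmaAldous1}, the bounds $|\varphi^\vep-\varphi|\leqslant C\vep^{\frac12-\delta}+C\vep^{1-2\delta}$ for $\varphi^\vep=\varphi+\sqrt\vep\varphi_1+\vep\varphi_2$ and
$$
|\mathcal{L}^\vep\varphi^\vep(u^\vep,m^\vep,z^\vep,\zeta^\vep)|\leqslant C(T)\big(1+\vep^{\frac12-3\delta}+\vep^{1-2\delta}(\dbn{u^\vep}_{H^1}^2+\dbn{m^\vep}_{L^2}^2)\big)
$$
on $[0,\tau^\vep_\delta]$, with $C$ depending only on $T$, $\dbn{u_0}_{L^2}$ and $\dbn{\phi}_{\mathcal{L}_2(L^2,H^3\cap\dot{H}^{-4})}$.

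Finally, I would apply Dynkin's formula between $t_1$ and $t_2$ to the martingale associated with $\varphi^\vep$, replace $\varphi^\vep$ by $\varphi$ at the two endpoints at the cost of the $\vep^{\frac12-\delta}+\vep^{1-2\delta}$ error, and bound the right-hand side using $t_2-t_1\leqslant\bar\delta$ together with $\mathbb{E}\big[\sup_{t\leqslant\tau^\vep\wedge T}K^2\big]\leqslant C(T)$ from Proposition~\ref{borneespK}. This yields
$$
\mathbb{E}\big[\langle u^\vep(t_2)-u^\vep(t_1),u^\vep(t_1)\rangle_{H^{-\frac12}}\big]\leqslant C(T)\bar\delta\big(1+\vep^{\frac12-3\delta}+\vep^{1-2\delta}\big)+C\vep^{\frac12-\delta}+C\vep^{1-2\delta},
$$
and choosing $\vep_0$ then $\delta_0$ small (recall $\delta\leqslant\tfrac18$) makes the right-hand side at most $\eta$. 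The step I expect to be the main obstacle is the presence of the random, $\mathcal{F}_{t_1}$-measurable frozen datum $v$ inside the test function: this has to be legitimized by conditioning on $\mathcal{F}_{t_1}$ and invoking the Markov property of $(u,m,\mu,z^\vep,\zeta^\vep)$, so that all corrector and generator estimates are carried out with $v$ deterministic, the uniformity of the constants being preserved precisely because $\dbn{v}_{L^2}$ coincides with the conserved quantity $\dbn{u_0}_{L^2}$.
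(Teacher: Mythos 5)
Your proposal is correct and follows essentially the same route as the paper, whose proof of Lemma \ref{lemmaaldous2} consists precisely in rerunning the perturbed-test-function argument of Lemma \ref{lemmaAldous1} for the linear functional $\varphi(u)=\langle u,h\rangle_{H^{-\frac12}}$ and then choosing $h=u^\varepsilon(\tau\wedge\tau^\varepsilon)$, leaving the details to the reader. You moreover make explicit the one point the paper glosses over, namely legitimizing the random frozen datum by conditioning on $\mathcal{F}_{\tau\wedge\tau^\varepsilon}$ and using the Markov property together with the conservation of $\dbn{u^\varepsilon(t)}_{L^2}$ and the bounds of Proposition \ref{borneespK}, which is exactly what is needed.
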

\begin{proof}

The proof is very similar to the proof of Lemma \ref{lemmaAldous1}, by applying  the Pertubed Test Function method to $\varphi(u)=\langle u,h\rangle_{H^{-\frac{1}{2}}}$ for a fixed function $h\in H^{-\frac{1}{2}}$, then choosing $h=u^\varepsilon(\tau\wedge\tau^\varepsilon)$, so we
leave the details to the reader.
\end{proof}

Proposition \ref{hypothese2} is now a consequence of  Lemmas \ref{lemmaAldous1} and \ref{lemmaaldous2}.

\begin{proof}[Proof of Proposition \ref{hypothese2}]
Let $\lambda$ and $\eta$ be two positive numbers, and let $\tau$ be a stopping time such that $\tau<T-\bar \delta$. Then: 
\begin{align*}
\PP\left(\dbn{u^\varepsilon(\tau+\bar{\delta})-u^\varepsilon(\tau)}_{H^{-\frac{1}{2}}}>\lambda\right) 
\leqslant& \PP\left(\dbn{u^\varepsilon((\tau+\bar{\delta})\wedge\tau^\varepsilon)-u^\varepsilon(\tau\wedge\tau^\varepsilon)}_{H^{-\frac{1}{2}}}>\lambda\right)\\
&+\PP(\tau^\varepsilon<T) \\
\leqslant& \PP\left(\dbn{u^\varepsilon((\tau+\bar{\delta})\wedge\tau^\varepsilon)-u^\varepsilon(\tau\wedge\tau^\varepsilon)}_{H^{-\frac{1}{2}}}>\lambda\right)+ \frac{\eta}{2}
\end{align*}
for $\varepsilon$ small enough. Now, since 
\begin{align*}
\mathbb{E}&\left[\dbn{u^\varepsilon((\tau+\bar{\delta})\wedge\tau^\varepsilon)-u^\varepsilon(\tau\wedge\tau^\varepsilon)}^2_{H^{-\frac{1}{2}}}\right] \\&= \mathbb{E}\left[\dbn{u^\varepsilon((\tau+\bar{\delta})\wedge\tau^\varepsilon)}_{H^{-\frac{1}{2}}}^2-\dbn{u^\varepsilon(\tau\wedge\tau^\varepsilon)}^2_{H^{-\frac{1}{2}}}\right] \\
&-2\mathbb{E}\left[\langle u^\varepsilon((\tau+\bar{\delta})\wedge\tau^\varepsilon)-u^\varepsilon(\tau\wedge\tau^\varepsilon),u^\varepsilon(\tau\wedge\tau^\varepsilon)\rangle_{H^{-\frac{1}{2}}}\right],
\end{align*}
the conclusion follows from Markov inequality
and  Lemmas \ref{lemmaAldous1} and \ref{lemmaaldous2}, for $\varepsilon$  and $\bar{\delta}$ small enough.
\end{proof}

The next proposition is now a consequence of Lemma \ref{hypothese1} and Proposition \ref{hypothese2}, thanks to Aldous criterion and
the compactness of the embedding of $H^1$ into $H^s_{loc}$ for $s<1$.
\begin{prop}
The sequence $(u^\varepsilon)$ is tight in $\mathcal{C}^0([0,T],H^s_{loc})$ for $s<1$.
\end{prop}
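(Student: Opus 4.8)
The plan is to apply Aldous' tightness criterion (\cite[Theorem 16.10]{Bil99}) in a two-scale form, using Lemma \ref{hypothese1} for the compactness ingredient and Proposition \ref{hypothese2} for the control of increments, and to bridge the two different scales of spaces by interpolation. Recall that $H^s_{loc}(\ER)$ is a separable metric space, its topology being generated by the countable family of seminorms $f\mapsto \dbn{f}_{H^s([-\ell,\ell])}$, $\ell\in\EN$, and that, by Arzel\`a--Ascoli, relatively compact subsets of $\mathcal{C}^0([0,T],H^s_{loc})$ are characterized by equicontinuity together with pointwise relative compactness in $H^s_{loc}$. I would first derive the compact containment condition from Lemma \ref{hypothese1}: given $\eta>0$, pick $R$ so large that $\limsup_{\vep\to0}\PP(\sup_{[0,T]}\dbn{u^\vep(t)}_{H^1}>R)<\eta$; on the complementary event the whole trajectory $u^\vep(\cdot)$ stays in the closed ball $\{v:\dbn{v}_{H^1}\leqslant R\}$, which is a compact subset of $H^s_{loc}$ for every $s<1$ by the compactness of the embedding $H^1\hookrightarrow H^s_{loc}$ (Rellich's theorem on each $[-\ell,\ell]$ together with a diagonal extraction). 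This simultaneously yields the pointwise relative compactness of the laws of $u^\vep(t)$ and the uniform $H^1$-bound exploited below.

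For the modulus of continuity, Proposition \ref{hypothese2} provides exactly Aldous' stopping-time increment condition, but measured in the weaker space $H^{-1/2}$. To transfer it to the target topology I would use the elementary Fourier-side interpolation inequality
\[
\dbn{v}_{H^s}\leqslant \dbn{v}_{H^1}^{1-\theta}\,\dbn{v}_{H^{-1/2}}^{\theta},\qquad \theta=\tfrac{2}{3}(1-s)\in(0,1),
\]
valid for $v\in H^1$ and $s\in(-\tfrac12,1)$. On the event $\{\sup_{[0,T]}\dbn{u^\vep}_{H^1}\leqslant R\}$ this gives, for any bounded stopping time $\tau\leqslant T$ and $\bar\delta$ small,
\[
\dbn{u^\vep(\tau+\bar\delta)-u^\vep(\tau)}_{H^s}\leqslant R^{1-\theta}\,\dbn{u^\vep(\tau+\bar\delta)-u^\vep(\tau)}_{H^{-1/2}}^{\theta}.
\]
Hence, choosing the parameters in Proposition \ref{hypothese2} appropriately and invoking Chebyshev's inequality together with the compact containment of the previous step, the $H^{-1/2}$-control of increments is upgraded to an $H^s$-control, and therefore to an $H^s_{loc}$-control, i.e. to Aldous' stopping-time condition measured in the metric of $H^s_{loc}$.

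With both hypotheses of Aldous' criterion now verified directly in $H^s_{loc}$, the criterion yields the tightness of $(u^\vep)$ in $\mathcal{C}^0([0,T],H^s_{loc})$ for every $s<1$. The main -- and essentially only -- obstacle is the mismatch of scales: the perturbed test function estimates of Lemmas \ref{lemmaAldous1} and \ref{lemmaaldous2} only deliver increment bounds in the negative-index space $H^{-1/2}$, where the singular $\vep^{-1/2}$-term of $\mathcal{L}^\vep$ can be absorbed by the correctors, whereas compactness is available only from the positive-index bound of Lemma \ref{hypothese1}. Reconciling these two via the above interpolation inequality, uniformly in $\vep$ and uniformly over stopping times, is the crux; once this is done the conclusion is a routine application of Aldous' criterion and Arzel\`a--Ascoli.
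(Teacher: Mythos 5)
Your proposal is correct and follows essentially the same route as the paper, whose proof is a one-line appeal to Lemma \ref{hypothese1}, Proposition \ref{hypothese2}, Aldous' criterion, and the compactness of the embedding $H^1\hookrightarrow H^s_{loc}$ for $s<1$. The interpolation inequality $\dbn{v}_{H^s}\leqslant \dbn{v}_{H^1}^{1-\theta}\dbn{v}_{H^{-1/2}}^{\theta}$ with $\theta=\tfrac{2}{3}(1-s)$ that you use to upgrade the $H^{-1/2}$ increment control to the $H^s_{loc}$ topology on the high-probability event $\{\sup_{[0,T]}\dbn{u^\vep}_{H^1}\leqslant R\}$ is exactly the step the paper leaves implicit, and your writing it out (modulo the harmless constant $(2R)^{1-\theta}$ in place of $R^{1-\theta}$) is a faithful completion of the paper's argument.
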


Now that we have obtained the tightness of $(u^\varepsilon)_\varepsilon$, let us briefly explain how to get the tightness of the whole sequence 
$(u^\varepsilon,M^\varepsilon, \sqrt{\varepsilon}\mathcal{V}^\varepsilon,z,\zeta)$.
For the sequence $(M^\varepsilon)_\varepsilon$ defined in \eqref{Mepsi_Vepsi} we use again Aldous criterion and Proposition \ref{borneespK} coupled with the compact embedding $L^2\hookrightarrow H^{-\sigma}_{loc}$ to get the tightness in $\mathcal{C}^0([0,T],H^{-\sigma}_{loc})$.
For the sequence $(\sqrt{\varepsilon}\mathcal{V}^\varepsilon)_\varepsilon$ defined in \eqref{Mepsi_Vepsi}, we also use Proposition  
\ref{borneespK} and the compact embedding $H^1([0,T],L^2)\hookrightarrow H^s([0,T], H_{loc}^{-\sigma})$ to conclude. {Note that in these 
cases, we do not need to use the perturbed test function method to estimate the modulus of continuity, since we have taken integrals in time, so that
the estimates of Proposition \ref{borneespK} are sufficient.}
Finally,  $z$ and $\zeta$ are constant random processes, which both belong almost surely to $E$ defined in \eqref{defE}, according to the proof of Proposition \ref{propprocstationnaire} thus the tightness is established.

	\section{Convergence to the Stochastic Schr\"odinger equation }
	
	In this  section, we prove the main result of this paper, that is Theorem \ref{thmfinal}.  With this aim in view, we first prove some weak convergence results, similar to those we can find in \cite{AdAd88}, before passing to the limit in $\varepsilon$  in the martingale problem.  
	The regularities on the initial data and on $\phi$ assumed in Theorem \ref{thmfinal} ensure that the process $(z,\zeta)\in H^3\cap \dot{H}^{-3}$ and that the computations below are justified. 
		\subsection{Weak convergences}
		\label{Weak convergences}
		We have proved in the previous section that the sequence  $(u^\varepsilon,M^\varepsilon,\sqrt{\varepsilon}\mathcal{V}^\varepsilon,z,\zeta)$ is tight in $C([0,T],H^s_{loc})\times C([0,T],H^{-\sigma}_{loc})\times H^s([0,T],H^{-\sigma}_{loc})\times E\times E$ for $s<1$ and $\sigma<0$. Then, according to Skorohod Theorem, there exists a probability space $(\tilde{\Omega},\tilde{\mathcal{F}},\tilde{\PP})$ and a sequence of random variables $(\tilde{u}^\varepsilon,\tilde{M}^\varepsilon,\tilde{\mathcal{V}}^\varepsilon,\tilde{z}^\varepsilon,\tilde{\zeta}^\varepsilon)$ on $\tilde{\Omega}$, equal in law to  $(u^\varepsilon,M^\varepsilon,\sqrt{\varepsilon}\mathcal{V}^\varepsilon,z,\zeta)$, which converges almost surely in the same space to $(\tilde{u},\tilde{M},\tilde{\mathcal{V}},\tilde{z},\tilde{\zeta})$ as $\varepsilon$ goes to $0$. 
Let us define
\begin{equation}\label{defderiveesSko}
\tilde{m}^\varepsilon=\dr_t\tilde{M}^\varepsilon, \quad \tilde{m}= \dr_t\tilde{M}, \quad \tilde{V}^\varepsilon=\frac{1}{\sqrt{\varepsilon}}\dr_t\tilde{\mathcal{V}}^\varepsilon, \quad \text{and} \quad \tilde V=\dr_t \tilde{ \mathcal{V}},
\end{equation}
where the derivatives are taken in the weak sense.
We also define the process 
\begin{equation}\label{tildeZeps}
\left(\tilde{\z}^\varepsilon(t),\tilde{\zz}^\varepsilon(t)\right)=\left(\tilde{z}^\varepsilon\left(\frac{t}{\varepsilon}\right),\tilde{\zeta}^\varepsilon\left(\frac{t}{\varepsilon}\right)\right),
\end{equation}
which is equal in law to $(z^\vep,\zeta^\vep)$.
Note that the new processes satisfy the same equations, with the same regularities thanks to Proposition \ref{GronwallH} and \eqref{boundglobalwp}. We also define 
\begin{equation}\label{Ktilde}
\tilde{K}_t^\varepsilon=\dbn{\dr_x\tilde{u}^\varepsilon}_{L^2}^2 +\frac{1}{2}\dbn{\tilde{m}^\varepsilon}_{L^2}^2+\frac{1}{2}\|\varepsilon\tilde{V}^\varepsilon\|_{L^2}^2.
\end{equation}
Now, in order to identify  
the limit of the processes $u^\varepsilon$ and $m^\varepsilon$, 
we first prove a bound on the new random variables which is similar to the bounds obtained in Lemmas \ref{borneespK}. 
Note that the set $E$ plays here an important role.

\begin{prop}\label{borneespKtilde}
For $\tilde{K}^\varepsilon$ and $\tilde{V}^\varepsilon$ defined in equations \eqref{Ktilde} and \eqref{defderiveesSko},   there exists a stopping time  $\tilde{\tau}^\varepsilon$ and a constant $C(T)>0$ independent of $\varepsilon$ such that:
\begin{equation}
\mathbb{E}\left[\sup_{t\in[0,\tilde{\tau}^\varepsilon\wedge T]}(\tilde{K}^\varepsilon_t)^2+\alpha\|\sqrt{\varepsilon}\tilde{V}^\varepsilon\|_{L^2(0,\tilde{\tau}^\varepsilon\wedge T;L^2)}^2\right]\leqslant C(T).
\end{equation}
Besides, $\tilde  \tau^\vep$  converges in probability to $T$ as $\varepsilon$ goes to $0$.
\end{prop}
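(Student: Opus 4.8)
The plan is to transfer the estimate of Proposition~\ref{borneespK} to the new probability space $(\tilde\Omega,\tilde{\mathcal F},\tilde{\PP})$ through the equality in law produced by the Skorohod representation, rather than redoing the perturbed test function analysis there. The only thing to prove is that the relevant quantities are measurable images of the Skorohod tuple, so that the bound follows by law-invariance.

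The starting point is that Skorohod's theorem gives the equality in law of the \emph{integrated} tuple, $(\tilde u^\varepsilon,\tilde M^\varepsilon,\tilde{\mathcal V}^\varepsilon,\tilde z^\varepsilon,\tilde\zeta^\varepsilon)\overset{d}{=}(u^\varepsilon,M^\varepsilon,\sqrt\varepsilon\,\mathcal V^\varepsilon,z,\zeta)$. All objects entering $\tilde K^\varepsilon$, the stopping time, and the statement are built from this tuple by fixed Borel-measurable operations: weak time differentiation (yielding $\tilde m^\varepsilon=\dr_t\tilde M^\varepsilon$ and $\tilde V^\varepsilon=\varepsilon^{-1/2}\dr_t\tilde{\mathcal V}^\varepsilon$ as in \eqref{defderiveesSko}), multiplication by powers of $\varepsilon$, the Fourier multipliers $\dr_x^{-1}$ and $(\dr_x^2)^{-1}$, and the deterministic time rescaling $t\mapsto t/\varepsilon$ producing $(\tilde\z^\varepsilon,\tilde\zz^\varepsilon)$ in \eqref{tildeZeps}. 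Since a law is preserved under composition with a measurable map, applying these operations simultaneously to both sides yields the joint identity
\[
(\tilde u^\varepsilon,\tilde m^\varepsilon,\tilde V^\varepsilon,\tilde\z^\varepsilon,\tilde\zz^\varepsilon)\ \overset{d}{=}\ (u^\varepsilon,m^\varepsilon,V^\varepsilon,z^\varepsilon,\zeta^\varepsilon),
\]
where I use $\dr_t(\sqrt\varepsilon\,\mathcal V^\varepsilon)=\sqrt\varepsilon\,V^\varepsilon$ with \eqref{Mepsi_Vepsi}, and the fact that the time change $t\mapsto t/\varepsilon$ of $(z,\zeta)$ reproduces exactly the driving process $(z^\varepsilon,\zeta^\varepsilon)$ of \eqref{equationzeps}, so that the right-hand side is the full original system \eqref{systemeummu}. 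In particular $\tilde K^\varepsilon_t\overset{d}{=}K_t$, jointly in $t$, and the finiteness of the constant $C(T)$ below will moreover guarantee that $\tilde m^\varepsilon$ and $\tilde V^\varepsilon$ inherit the $L^2$-regularity of $m^\varepsilon$ and $V^\varepsilon$, consistently with the remark that the new variables solve the same equations.

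Next I would define $\tilde\tau^\varepsilon$ as the image of $\tau^\varepsilon$ under the same construction, namely the analogue for $(\tilde\z^\varepsilon,\tilde\zz^\varepsilon)$ of the hitting time \eqref{deftaudelta} entering the definition of $\tau^\varepsilon$ in Proposition~\ref{borneespK}. Since $\tau^\varepsilon$ is a hitting time of a continuous adapted functional, it is a measurable map of the path, so the displayed equality gives $\tilde\tau^\varepsilon\overset{d}{=}\tau^\varepsilon$, and $\tilde\tau^\varepsilon$ is a stopping time for the Skorohod filtration, the copies being adapted solutions. Consequently the $[0,\infty]$-valued Borel functional $\sup_{t\le\,\cdot\,\wedge T}K_t^2+\alpha\|\sqrt\varepsilon V\|^2_{L^2(0,\,\cdot\,\wedge T;L^2)}$, evaluated on the two equal-in-law systems, has the same expectation; Proposition~\ref{borneespK} bounds it by $C(T)$, which is precisely the asserted inequality (the lower semicontinuity of the $L^2$-norm, extended by $+\infty$ off $L^2$, makes this functional measurable, so finiteness of $C(T)$ forces the tilde quantities into $L^2$ almost surely). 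The convergence $\tilde\tau^\varepsilon\to T$ in probability follows likewise, since $\tilde{\PP}(\tilde\tau^\varepsilon\le T)=\PP(\tau^\varepsilon\le T)\to0$ as $\varepsilon\to0$.

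The main obstacle is the careful bookkeeping of this law transfer. One must verify that weak time differentiation and the time rescaling are genuinely Borel maps on the path spaces $C([0,T],H^{-\sigma}_{loc})$, $H^s([0,T],H^{-\sigma}_{loc})$ and $E$ of Proposition~\ref{tensionuplet}, so that the displayed identity holds \emph{jointly} and not merely marginally, and that the hitting-time functional defining $\tau^\varepsilon$ commutes with these maps. Once this is in place no new estimate is required: everything reduces to Propositions~\ref{borneespK} and~\ref{propprocstationnaire}. Here the space $E$ defined in \eqref{defE}, and the growth bound \eqref{borne z zeta}, play the decisive role: they control $\|\tilde z^\varepsilon(t/\varepsilon)\|_{H^3\cap\dot H^{-3}}\lesssim(1+(T/\varepsilon)^\delta)\|\tilde z^\varepsilon\|_E$ on $[0,T]$, matching the threshold $\varepsilon^{-\delta}$ of \eqref{deftaudelta}, so that $(\tilde\z^\varepsilon,\tilde\zz^\varepsilon)$ stays controlled and $\tilde\tau^\varepsilon$ behaves exactly as $\tau^\varepsilon$ does on the original space.
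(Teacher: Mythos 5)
Your proposal is correct in substance, but it takes a genuinely different route from the paper. The paper does not transfer the estimate by law-invariance: it re-runs the perturbed test function computations of Section \ref{sectionenergie} on the Skorohod space, the single new ingredient being the control of the $\vep$-dependent copies $(\tilde z^\vep,\tilde\zeta^\vep)$. There, the a.s.\ convergence in $E$ furnished by Skorohod's theorem, combined with the fact that the limit $\tilde z$ has the law of $z$ and hence satisfies \eqref{borne z zeta}, yields random variables $\tilde Z_1,\tilde Z_2$ such that $\dbn{\tilde z^\vep(t)}_{H^3\cap\dot{H}^{-3}}+\dbn{\tilde\zeta^\vep(t)}_{H^3\cap\dot{H}^{-3}}\leqslant \tilde Z_1+\vert t\vert^{\delta'}\tilde Z_2$ for small $\vep$ (with $\delta'<\delta$, which is needed to stay below the threshold $\vep^{-\delta}$ of \eqref{deftaudeltatilde}; your exponent $\delta$ would just miss it), so that $\tilde\tau^\vep_\delta\to\infty$ a.s.\ and Propositions \ref{bornesHK}, \ref{boundpolynome} and the appendix argument apply verbatim on the new space. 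Your transfer argument avoids repeating any estimate, at the price of the measurability bookkeeping you correctly identify, plus two points you should make explicit. First, you must transfer the \emph{full} stopping time constructed in the appendix proof of Proposition \ref{borneespK}, namely $\tau^\vep=\inf\{t\in[0,T]:\vep(H^\vep_t)^2\geqslant 1\}\wedge\tau^\vep_\delta$, and not merely the analogue of \eqref{deftaudelta} that your parenthetical suggests: the bound of Proposition \ref{borneespK} holds only up to this capped time, and $\Es[\sup_{t\leqslant\tau^\vep_\delta\wedge T}K_t^2]$ is not what that proposition controls. Your phrase ``image of $\tau^\vep$ under the same construction'' is the right prescription, and it does define a Borel hitting-time functional of the whole tuple, since $H^\vep_t$ is, via Lemma \ref{inverse2}, an explicit deterministic function of $(u,m,\mu,z^\vep,\zeta^\vep)(t)$; note this uses the constructive form of $\tau^\vep$ from the proof, not the bare statement of Proposition \ref{borneespK}, under which an abstract stopping time need not be a path functional. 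Second, your joint identity in law requires the pathwise coupling $z^\vep(t)=z(t/\vep)$ on the original space --- mere equality in law of $z^\vep$ and $z(\cdot/\vep)$, as invoked in Proposition \ref{propprocstationnaire}, would not make the right-hand side of your displayed identity the original system \eqref{systemeummu}; the paper relies on the same coupling implicitly when asserting that the Skorohod copies satisfy the same equations, so this is a shared but worth-stating hypothesis. As for what each route buys: yours is shorter and estimate-free but yields only distributional and in-probability statements, which do suffice for Proposition \ref{borneespKtilde} as stated; the paper's route additionally delivers the a.s.\ growth bound on $(\tilde{\z}^\vep,\tilde{\zz}^\vep)$ and the a.s.\ divergence of $\tilde\tau^\vep_\delta$, which are conveniently reused downstream (e.g.\ in the proof of Proposition \ref{propFTP}).
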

\begin{proof}
The only difference between the system satisfied by $(u^\varepsilon,m^\varepsilon)$ and the one satisfied by $(\tilde{u}^\varepsilon,\tilde{m}^\varepsilon)$ is that the process $z$ is replaced by a process $\tilde{z}^\varepsilon$ which a priori depends on $\varepsilon$.  If it were not the case, we could do the same computations as in section \ref{sectionenergie} and easily recover the above bound. 
Let us introduce
\begin{equation}\label{deftaudeltatilde}
\tilde{\tau}_\delta^\varepsilon = \inf \left\{t: \dbn{\tilde{\z}^\varepsilon}_{H^3\cap\dot{H}^{-3}} + \|\tilde{\zz}^\varepsilon\|_{H^3\cap\dot{H}^{-3}} \geqslant \varepsilon^{-\delta}\right\}.
\end{equation}
Note that the Skorohod Theorem ensures that $(\tilde{z}^\varepsilon, \tilde \zeta^\vep)$ converges a.s. to $(\tilde{z},\tilde \zeta)$ in $E$, so that  for any $\delta'<\delta$
 and for a.e. $\omega\in\tilde{\Omega}$, there exists a constant $C(\omega)$ such that for $\varepsilon$ small enough, 
$$\dbn{\tilde{z}^\varepsilon(t)}_{H^3\cap\dot{H}^{-3}} + \|\tilde{\zeta}^\varepsilon(t)\|_{H^3\cap\dot{H}^{-3}}\leqslant \dbn{\tilde{z}(t)}_{H^3\cap\dot{H}^{-3}}  + \|\tilde{\zeta}(t)\|_{H^3\cap\dot{H}^{-3}}+C(\omega)(1+\vert t\vert^{\delta'}).$$
Besides, $\tilde{z}$ and $z$ have the same law, so we deduce from the above bound and \eqref{borne z zeta} that
there exist two random variables $\tilde{Z_1},\tilde{Z_2}$ such that for $\vep>0$ sufficiently small,
 $$\dbn{\tilde{z}^\varepsilon(t)}_{H^3\cap\dot{H}^{-3}} + \|\tilde{\zeta}^\varepsilon(t)\|_{H^3\cap\dot{H}^{-3}}\leqslant \tilde{Z_1}+\vert t\vert^{\delta'} \tilde{Z_2}.$$ 
 Thus the stopping time $\tilde{\tau}_\delta^\varepsilon$ converges a.s. to infinity when $\varepsilon$ goes to $0$.  
\end{proof}

Note that the bound in Proposition \ref{borneespKtilde} implies the weak convergence, up to the extraction of a subsequence, of the stopped 
processes: there is a subsequence, still denoted $(\tilde{u}^\varepsilon_{\tilde{\tau}^\varepsilon}, \tilde{m}^\varepsilon_{\tilde{\tau}^\varepsilon}, \sqrt{\varepsilon}\tilde{V}^\varepsilon_{\tilde{\tau}^\varepsilon})$, which converges in $L^4(\tilde{\Omega},L^\infty(0,T;H^1))\times L^4(\tilde{\Omega},L^\infty(0,T;L^2))\times L^2(\tilde{\Omega}\times [0,T] \times \ER)$ to $(\tilde{u},\tilde{m},\tilde{V})$ weak star.  
The next two lemmas, which are adapted from \cite{AdAd88}, give precisions on those weak limits.
\begin{lemma} \label{weaklimmtilde}
The weak limit $\tilde{m}$ of the stopped process $\tilde{m}^\varepsilon_{\tilde{\tau}^\varepsilon}$ is equal to $-\vert \tilde{u}\vert^2$.
\end{lemma}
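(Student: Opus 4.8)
The plan is to pass to the limit as $\vep\to0$ in equation \eqref{eqV} satisfied by the translated wave component, which in the scaling of the diffusion approximation degenerates into the elliptic constraint $\dr_x^2(\tilde m+\vert\tilde u\vert^2)=0$. Since the Skorohod processes satisfy the same system as $(u^\vep,m^\vep,\mu^\vep)$ up to the stopping time $\tilde\tau^\vep$, the pair $(\tilde m^\vep,\tilde V^\vep)$ obeys
$$\vep^2\dr_t\tilde V^\vep+\alpha\vep\tilde V^\vep+\dr_x\big(\tilde m^\vep+\vert\tilde u^\vep\vert^2\big)=0\qquad\text{on }[0,\tilde\tau^\vep],$$
where $\tilde V^\vep$ is defined in \eqref{defderiveesSko}. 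First I would test this identity against $\theta(t)\chi(x)$ with $\theta\in C_c^\infty((0,T))$ and $\chi\in C_c^\infty(\ER)$, integrating by parts in $t$ in the first term and in $x$ in the last, and work on the event $\{\tilde\tau^\vep\geqslant T\}$, whose probability tends to $1$ by Proposition \ref{borneespKtilde}. This gives
$$-\int_0^T\!\!\int_\ER(\tilde m^\vep+\vert\tilde u^\vep\vert^2)\,\theta\chi'\,dx\,dt=\vep^2\int_0^T\!\!\int_\ER\tilde V^\vep\,\theta'\chi\,dx\,dt-\alpha\vep\int_0^T\!\!\int_\ER\tilde V^\vep\,\theta\chi\,dx\,dt.$$

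The two terms on the right-hand side vanish in the limit: writing $\vep^2\tilde V^\vep=\vep^{3/2}(\sqrt\vep\tilde V^\vep)$ and $\alpha\vep\tilde V^\vep=\alpha\sqrt\vep(\sqrt\vep\tilde V^\vep)$, the bound $\mathbb E\big[\dbn{\sqrt\vep\tilde V^\vep}_{L^2(0,\tilde\tau^\vep\wedge T;L^2)}^2\big]\leqslant C(T)$ of Proposition \ref{borneespKtilde} shows that, after integration against the bounded test function, both contributions are $O(\sqrt\vep)\to0$ in $L^1(\tilde\Omega)$. For the left-hand side, the a.s.\ convergence $\tilde u^\vep\to\tilde u$ in $C([0,T],H^s_{loc})$ with $1/2<s<1$ yields locally uniform convergence, hence $\vert\tilde u^\vep\vert^2\to\vert\tilde u\vert^2$ on the compact support of $\chi$, while conservation of the $L^2$ norm provides a deterministic domination $\int\vert\tilde u^\vep\vert^2\vert\theta\chi'\vert\leqslant C\dbn{u_0}_{L^2}^2$; the linear term is handled by the weak-$\star$ convergence $\tilde m^\vep_{\tilde\tau^\vep}\rightharpoonup\tilde m$ in $L^4(\tilde\Omega,L^\infty(0,T;L^2))$. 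Taking expectations and passing to the limit term by term then gives $\mathbb E\int_0^T\int_\ER(\tilde m+\vert\tilde u\vert^2)\theta\chi'\,dx\,dt=0$ for all such $\theta,\chi$.

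Since $\theta$ and $\chi$ are arbitrary, this shows that $\dr_x(\tilde m(t)+\vert\tilde u(t)\vert^2)=0$ for a.e.\ $t$, a.s., so that $\tilde m(t)+\vert\tilde u(t)\vert^2$ is constant in $x$. As $\tilde m(t)\in L^2(\ER)$ and, by the one-dimensional embedding $H^1\hookrightarrow L^4$, also $\vert\tilde u(t)\vert^2\in L^2(\ER)$, this constant lies in $L^2(\ER)$ and must therefore vanish, giving $\tilde m=-\vert\tilde u\vert^2$. The main obstacle is the control of the $\tilde V^\vep$ contributions: pointwise in time $\tilde V^\vep$ is of order $\vep^{-1/2}$, and only the combination of the $\vep^2$ and $\alpha\vep$ prefactors with the space--time $L^2$ bound of Proposition \ref{borneespKtilde} — that is, the precise diffusion-approximation scaling — makes them disappear. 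A secondary point is to make the passage to the limit in the nonlinear term $\vert\tilde u^\vep\vert^2$ compatible with the stopping-time restriction, which is why one works on the event $\{\tilde\tau^\vep\geqslant T\}$ and uses that $\tilde\tau^\vep\to T$ in probability.
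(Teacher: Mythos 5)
Your strategy is the same as the paper's: pass to the limit in the degenerate equation \eqref{eqV} satisfied by $(\tilde m^\vep,\tilde V^\vep)$, kill the singular terms $\vep^2\dr_t\tilde V^\vep$ and $\alpha\vep\tilde V^\vep$ using the bound $\mathbb{E}\big[\dbn{\sqrt\vep\tilde V^\vep}^2_{L^2(0,\tilde\tau^\vep\wedge T;L^2)}\big]\leqslant C(T)$ of Proposition \ref{borneespKtilde}, treat $\tilde m^\vep$ by weak-star convergence in $L^4(\tilde\Omega,L^\infty(0,T;L^2))$ and $\vert\tilde u^\vep\vert^2$ by the a.s.\ convergence in $C([0,T],H^s_{loc})$ with domination from mass conservation, and conclude from $\dr_x(\tilde m+\vert\tilde u\vert^2)=0$ together with $\tilde m,\vert\tilde u\vert^2\in L^2(\ER)$. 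The scaling analysis ($O(\vep^{3/2})$ and $O(\sqrt\vep)$ for the two singular terms) and the handling of the stopping time are correct, modulo the routine splitting $\mathbf{1}_{\{\tilde\tau^\vep\geqslant T\}}=1-\mathbf{1}_{\{\tilde\tau^\vep<T\}}$ and a H\"older bound on the complement event, which you should spell out when pairing the indicator with the weak-star convergent sequence.

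There is, however, one genuine gap at the final step. You test only against \emph{deterministic} functions $\theta(t)\chi(x)$ and take expectations, which yields $\mathbb{E}\int_0^T\!\int_\ER(\tilde m+\vert\tilde u\vert^2)\,\theta\chi'\,dx\,dt=0$. From this you cannot deduce ``$\dr_x(\tilde m+\vert\tilde u\vert^2)=0$ a.s.'': you only obtain that the deterministic distribution $\mathbb{E}[\tilde m+\vert\tilde u\vert^2]$ is constant in $x$; the random quantity $\langle\tilde m+\vert\tilde u\vert^2,\theta\chi'\rangle$ has mean zero but could take nonzero values of both signs on sets of positive probability. This is precisely why the paper tests with \emph{random} test functions $\varphi\in L^2(\tilde\Omega;\mathcal{D}((0,T)\times\ER))$. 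The fix is small: insert an arbitrary bounded random variable $G$ on $\tilde\Omega$, i.e.\ test against $G\theta\chi$. Then $G\theta\chi'\in L^{4/3}(\tilde\Omega;L^1(0,T;L^2))$ remains an admissible test function for the weak-star convergence of $\tilde m^\vep_{\tilde\tau^\vep}$, and the dominated-convergence argument for $\vert\tilde u^\vep\vert^2$ is unaffected; from $\mathbb{E}\big[G\,\langle\tilde m+\vert\tilde u\vert^2,\theta\chi'\rangle\big]=0$ for all such $G$ you get $\langle\tilde m+\vert\tilde u\vert^2,\theta\chi'\rangle=0$ a.s.\ for each fixed pair $(\theta,\chi)$, and a countable dense family upgrades this to $\dr_x(\tilde m+\vert\tilde u\vert^2)=0$ a.s. With this correction your argument is complete and essentially identical to the paper's; the only other (inessential) difference is that the paper works with the time-integrated equation, exploiting the a.s.\ convergence of $\tilde M^\vep=\int_0^\cdot\tilde m^\vep\,ds$ in $C([0,T],H^{-\sigma}_{loc})$, whereas you integrate by parts against $\theta'$ and rely solely on the weak-star convergence of $\tilde m^\vep_{\tilde\tau^\vep}$.
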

\begin{proof}
First, it is clear that $\vert\tilde{u}^\varepsilon_{\tilde{\tau}^\varepsilon}\vert^2$ converges a.s. to $\vert\tilde{u}\vert^2$ in $L^\infty(0,T,H^s_{loc})$.  Now, let $\varphi$ be a test function in $L^2(\tilde{\Omega};\mathcal{D}((0,T)\times \ER))$. Integrating in time equation \eqref{eqV} gives: 
\begin{align*}
&\mathbb{E}\Big[ \langle\varepsilon^2(\tilde{V}^\varepsilon_{\tilde{\tau}^\varepsilon}-\tilde V^\vep(0)),\mathbf{1}_{[0,\tilde \tau^\vep)}\varphi\rangle 
+\alpha \langle \varepsilon \int_0^t \tilde{V}^\varepsilon_{\tilde{\tau}^\varepsilon}(s) ds, \mathbf{1}_{[0,\tilde \tau^\vep)}\varphi\rangle\Big]\\
& =-\mathbb{E} \langle\ \int_0^t \dr_x\left(\tilde{m}^\varepsilon_{\tilde{\tau}^\varepsilon}(s) +\vert\tilde{u}^\varepsilon_{\tilde{\tau}^\varepsilon}\vert^2(s)\right) ds, \mathbf{1}_{[0,\tilde \tau^\vep)} \varphi\rangle.
\end{align*}
Now, $\sqrt{\vep} \tilde V^\vep_{\tilde \tau^\vep}$ , $\sqrt{\vep} \int_0^t \tilde V^\vep_{\tilde \tau^\vep}(s)ds$  and 
$\mathbf{1}_{[0,\tilde \tau^\vep)} \varphi$ are bounded in $L^2(\tilde{\Omega}\times [0,T] \times \ER)$, and we deduce that
the terms on the left hand side of the above equation tend to $0$ as $\vep$ goes to $0$.
On the other hand, 
\begin{align*}
 \mathbb{E} \langle\ \int_0^t \dr_x\tilde{m}^\varepsilon_{\tilde{\tau}^\varepsilon}(s)ds, \mathbf{1}_{[0,\tilde \tau^\vep)} \varphi\rangle
= \mathbb{E}\langle \dr_x M^\vep, \mathbf{1}_{[0,\tilde \tau^\vep)} \varphi\rangle
\end{align*}
converges to $\mathbb{E}\langle \dr_x \tilde M, \varphi\rangle$, by dominated convergence, thanks to the boundedness of 
$\tilde{m}^\varepsilon_{\tilde{\tau}^\varepsilon}$ in $L^{\infty}(\tilde \Omega; L^\infty(0,T;L^2_x))$ and the fact that 
$\dr_x \tilde M^\vep$ converges a.s. to $\dr_x \tilde M$ in $L^{\infty}(0,T;H^{-\sigma-1}_{loc})$, while $\mathbf{1}_{[0,\tilde \tau^\vep)} \varphi$
converges in probability to $\varphi$ in $L^1(0,T;H^{\sigma+1})$. In the same way, \linebreak $\mathbb{E} \langle \int_0^t \dr_x (|\tilde u^\vep_{\tilde \tau^\vep}|^2,
\mathbf{1}_{[0,\tilde \tau^\vep)} \varphi \rangle$ converges to $\mathbb{E} \langle \int_0^t \dr_x (|\tilde u|^2) ds, \varphi \rangle$,
and we deduce 
$$\dr_x(\tilde{m}+\vert\tilde{u}\vert^2)=0 \text{ a.s.  in } \mathcal{D}'((0,T)\times \ER ).$$
Since $\tilde{m},\vert\tilde{u}\vert^2\in L^2(\tilde{\Omega},L^\infty(0,T;L^2))$, it follows that $\tilde{m}+\vert\tilde{u}\vert^2=0$.
\end{proof}

\begin{lemma}\label{convweakmu}
The process $\tilde{m}^\varepsilon_{\tilde{\tau}^\varepsilon}\tilde{u}^\varepsilon_{\tilde{\tau}^\varepsilon}$ converges weakly to $-\vert\tilde{u}\vert^2\tilde{u}$ in $L^{2}(\Omega,L^2(0,T;H^{-1}))$. 
\end{lemma}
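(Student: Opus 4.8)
The plan is to pass to the limit in the product $\tilde m^\varepsilon_{\tilde\tau^\varepsilon}\tilde u^\varepsilon_{\tilde\tau^\varepsilon}$ by decoupling its two factors: $\tilde u^\varepsilon$ converges \emph{strongly} (though only locally in space), while $\tilde m^\varepsilon$ converges only weakly; the product of such sequences converges weakly provided one retains enough boundedness, and in dimension one the embedding $H^1(\ER)\hookrightarrow L^\infty(\ER)$ makes all the products meaningful. First I would record weak compactness: by Proposition \ref{borneespKtilde}, the Cauchy--Schwarz inequality in $\tilde\Omega$ and $H^1\hookrightarrow L^\infty$,
\[
\mathbb{E}\Big[\dbn{\tilde m^\varepsilon_{\tilde\tau^\varepsilon}\tilde u^\varepsilon_{\tilde\tau^\varepsilon}}_{L^\infty(0,T;L^2)}^2\Big]
\le \mathbb{E}\big[\dbn{\tilde m^\varepsilon_{\tilde\tau^\varepsilon}}_{L^\infty(0,T;L^2)}^4\big]^{1/2}\,\mathbb{E}\big[\dbn{\tilde u^\varepsilon_{\tilde\tau^\varepsilon}}_{L^\infty(0,T;H^1)}^4\big]^{1/2}\le C(T),
\]
so the family is bounded, hence weakly relatively compact, in $L^2(\tilde\Omega,L^2(0,T;H^{-1}))$. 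It then suffices to identify every weak limit point, which I would do by testing against $\psi$ in the dense class $L^\infty(\tilde\Omega)\otimes \mathcal{D}((0,T)\times\ER)$, writing the $H^{-1}$--$H^1$ pairing as an $L^2$ integral over $\ER$ (legitimate since the product lies in $L^2\hookrightarrow H^{-1}$).

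For the identification I would split
\[
\tilde m^\varepsilon_{\tilde\tau^\varepsilon}\tilde u^\varepsilon_{\tilde\tau^\varepsilon}
= \tilde m^\varepsilon_{\tilde\tau^\varepsilon}\big(\tilde u^\varepsilon_{\tilde\tau^\varepsilon}-\tilde u\big)
+ \tilde m^\varepsilon_{\tilde\tau^\varepsilon}\,\tilde u .
\]
The cross term is the main obstacle, and this is where the localization is essential. Denoting by $K$ the compact spatial support of $\psi$, I would bound, for each fixed time,
\[
\Big|\int_\ER \tilde m^\varepsilon_{\tilde\tau^\varepsilon}\big(\tilde u^\varepsilon_{\tilde\tau^\varepsilon}-\tilde u\big)\bar\psi\,dx\Big|
\le \dbn{\psi}_{L^\infty}\dbn{\tilde m^\varepsilon_{\tilde\tau^\varepsilon}}_{L^2}\,\dbn{\tilde u^\varepsilon_{\tilde\tau^\varepsilon}-\tilde u}_{L^2(K)},
\]
so that, after integrating in time, the contribution of this term is controlled by $C\,\dbn{\tilde m^\varepsilon_{\tilde\tau^\varepsilon}}_{L^\infty(0,T;L^2)}\sup_{[0,T]}\dbn{\tilde u^\varepsilon_{\tilde\tau^\varepsilon}-\tilde u}_{L^2(K)}$. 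Since $\tilde u^\varepsilon\to\tilde u$ a.s.\ in $C([0,T],H^s_{loc})$ (taking $s=0$) and $\tilde\tau^\varepsilon\to T$ in probability, this supremum tends to $0$ in probability; it is moreover bounded in $L^4(\tilde\Omega)$, as is $\dbn{\tilde m^\varepsilon_{\tilde\tau^\varepsilon}}_{L^\infty(0,T;L^2)}$, so their product is bounded in $L^2(\tilde\Omega)$ and tends to $0$ in probability. By the resulting uniform integrability (Vitali's theorem), the expectation of the cross term tends to $0$.

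For the second term, since $\tilde m$ is real valued, I would rewrite the pairing as $\mathbb{E}\int_0^T\int_\ER \tilde m^\varepsilon_{\tilde\tau^\varepsilon}\,\mathrm{Re}(\tilde u\bar\psi)\,dx\,dt$. The weight $\mathrm{Re}(\tilde u\bar\psi)$ belongs to $L^{4/3}(\tilde\Omega,L^1(0,T;L^2))$, the predual of $L^4(\tilde\Omega,L^\infty(0,T;L^2))$, because $\tilde u\in L^4(\tilde\Omega,L^\infty(0,T;L^\infty))$ and $\psi$ is bounded with compact support. Hence the weak-star convergence $\tilde m^\varepsilon_{\tilde\tau^\varepsilon}\rightharpoonup\tilde m$ gives convergence of this term to $\mathbb{E}\int_0^T\int_\ER \tilde m\,\mathrm{Re}(\tilde u\bar\psi)\,dx\,dt$, and Lemma \ref{weaklimmtilde} identifies $\tilde m=-|\tilde u|^2$, i.e.\ the limit equals $\mathbb{E}\int_0^T\langle -|\tilde u|^2\tilde u,\psi\rangle\,dt$. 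Combining the two terms shows that $\tilde m^\varepsilon_{\tilde\tau^\varepsilon}\tilde u^\varepsilon_{\tilde\tau^\varepsilon}+|\tilde u|^2\tilde u$ converges to $0$ weakly against the dense class of test functions; together with the uniform bound of the first step, this identifies the unique weak limit point and yields the claimed weak convergence in $L^2(\tilde\Omega,L^2(0,T;H^{-1}))$.
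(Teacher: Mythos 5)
Your proof is correct and takes essentially the same route as the paper: the identical decomposition $\tilde m^\vep_{\tilde\tau^\vep}\tilde u^\vep_{\tilde\tau^\vep}=\tilde m^\vep_{\tilde\tau^\vep}(\tilde u^\vep_{\tilde\tau^\vep}-\tilde u)+\tilde m^\vep_{\tilde\tau^\vep}\tilde u$, with the cross term handled by the strong convergence of $\tilde u^\vep_{\tilde\tau^\vep}$ in $L^4(\tilde\Omega,L^\infty(0,T;L^2_{loc}))$ (a.s.\ local convergence plus uniform integrability from mass conservation) and the second term by the weak-star convergence of $\tilde m^\vep_{\tilde\tau^\vep}$ against a predual element together with Lemma \ref{weaklimmtilde}. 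Your preliminary uniform bound in $L^2(\tilde\Omega,L^2(0,T;H^{-1}))$ and the reduction to a dense class of test functions only make explicit a density step the paper leaves implicit.
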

\begin{proof}
 Let  $\varphi\in L^{2}(\Omega,L^2(0,T;H^1)) $ such that $\text{supp } \varphi \subset [0,T]\times [-R,R]$, a.s., for some $R>0$. Then,
\begin{align*}
\mathbb{E}\left[\int_0^T\int_\ER\left(\tilde{m}^\varepsilon_{\tilde{\tau}^\varepsilon}\tilde{u}^\varepsilon_{\tilde{\tau}^\varepsilon}+\vert\tilde{u}\vert^2\tilde{u}\right)\varphi dxdt\right] =& \mathbb{E}\left[\int_0^T\int_\ER\tilde{m}^\varepsilon_{\tilde{\tau}^\varepsilon}\left(\tilde{u}^\varepsilon_{\tilde{\tau}^\varepsilon}-\tilde{u}\right)\varphi \, dx dt\right] \\&+ \mathbb{E}\left[\int_0^T\int_\ER\left(\tilde{m}^\varepsilon_{\tilde{\tau}^\varepsilon}+\vert\tilde{u}\vert^2\right)\varphi  \tilde{u} \,dx dt\right].
\end{align*}
Note that $\tilde{u}^\varepsilon_{\tilde{\tau}^\varepsilon}$ converges to $\tilde u$ in $L^4(\tilde{\Omega},L^\infty(0,T;L^2(-R,R)))$, thanks to
the a.s. convergence in $L^\infty(0,T;H^s_{loc})$ and the uniform integrability given by the conservation of $L^2$-norm. Since the first term in
the right hand side above is easily bounded by
\begin{align*}
&\dbn{\tilde{m}^\varepsilon_{\tilde{\tau}^\varepsilon}}_{L^4(\tilde{\Omega},L^\infty(0,T;L^2))} \dbn{\varphi}_{L^2(\tilde{\Omega},L^2(0,T;H^1))}\dbn{\tilde{u}^\varepsilon_{\tilde{\tau}^\varepsilon}-\tilde{u}}_{L^4(\tilde{\Omega},L^\infty(0,T;L^2(-R,R)))}T^\frac{1}{2},
\end{align*}
we deduce that this term tends to $0$ with $\vep$. 
On the other hand, the conservation of $L^2$-norm for $\tilde{u}$ implies  that  $\varphi\tilde{u}\in L^2(\Omega,L^1(0,T;L^2))$ and Lemma
\ref{weaklimmtilde} allows to
conclude that the second term in the right hand side above converges to $0$ with $\vep$. 
\end{proof}
The above lemmas will be useful in the next subsection. 

\subsection{Martingale Problem} \label{martingale problem}

In this subsection,
we use the Perturbed Test Function method to identify the limit generator of $\tilde u$,  and study the limit as $\vep$ goes to $0$ of the martingale problem
associated to $(\tilde{u}^\varepsilon_t,  \tilde{\z}^\varepsilon_t,\tilde{\zz}^\varepsilon_t)$. Since $\tilde u \in C([0,T]; H^s_{loc})$, a.s.,
we only need quadratic test functions to identify the generator, thanks to the martingale representation theorem (see  \cite[Theorem 8.2]{DPZ14}).
			
			We define the function $\varphi:u\mapsto (u,h)^\ell$ for $h\in H^1$ a fixed function with compact support,  and $\ell=1,2$.  
First we compute, as in Section \ref{sectionenergie}:
\begin{align*}
(\mathcal{L}^\varepsilon\varphi)(\tilde{u}^\varepsilon, \tilde m^\vep,\tilde{\z}^\varepsilon) =\ell(\tilde{u}^\varepsilon,h)^{\ell-1} (i\dr_x^2\tilde{u}^\varepsilon-i\tilde{m}^\varepsilon\tilde{u}^\varepsilon,h)-\frac{\ell}{\sqrt{\varepsilon}}(\tilde{u}^\varepsilon,h)^{\ell-1}(i\tilde{\z}^\varepsilon\tilde{u}^\varepsilon,h).
\end{align*}
The first corrector is then given by (see \eqref{firstcorr}):
\begin{equation}\label{varphi1}
\begin{split}
\varphi_1(\tilde{u}^\varepsilon,\tilde{\z}^\varepsilon,\tilde{\zz}^\varepsilon)%&=\ell(\tilde{u}^\varepsilon,h)^{\ell-1}(i\tilde{u}^\varepsilon\MM^{-1}\tilde{\z}^\varepsilon,h) \\
&=\ell(\tilde{u}^\varepsilon,h)^{\ell-1}(i\tilde{u}^\varepsilon(\dr_x^2)^{-1}(\tilde{\zz}^\varepsilon+\alpha\tilde{\z}^\varepsilon),h),
\end{split}
\end{equation}
and the second corrector (see \eqref{secondcorr}):
\begin{equation}\label{varphi2}
\begin{split}
\varphi_2(\tilde{u}^\varepsilon,\tilde{\z}^\varepsilon,\tilde{\zz}^\varepsilon)=&-\ell(\tilde{u}^\varepsilon,h)^{\ell-1}\left(\tilde{u}\MM^{-1}\left(\tilde{\z}^\varepsilon\MM^{-1}\tilde{\z}^\varepsilon-\mathbb{E}_\nu^\vep\left[z\MM^{-1}z\right]\right),h\right) \\
+\ell(\ell-1)&\MM^{-1}\left((i\tilde{\z}^\varepsilon\tilde{u}^\varepsilon,h)(i\tilde{u}^\varepsilon\MM^{-1}\tilde{\z}^\varepsilon,h)-\mathbb{E}_\nu\left[(iz\tilde{u}^\varepsilon,h)(i\tilde{u}^\varepsilon\MM^{-1}z,h)\right]\right).
\end{split}
\end{equation}
Finally, defining $\varphi^\varepsilon=\varphi+\sqrt{\varepsilon}\varphi_1+\varepsilon\varphi_2$, we get: 
\begin{equation} \label{genLeps}
\begin{split}
(\mathcal{L}^\varepsilon&\varphi^\varepsilon)(\tilde{u}^\varepsilon,\tilde m^\vep, \tilde{\z}^\varepsilon,\tilde{\zz}^\varepsilon) \\
= & \, \ell(\tilde{u}^\varepsilon,h)^{\ell-1}(i\dr_x^2\tilde{u}^\varepsilon-i\tilde{m}^\varepsilon\tilde{u}^\varepsilon,h) +\ell(\tilde{u}^\varepsilon,h)^{\ell-1}(\tilde{u}^\varepsilon\mathbb{E}_\nu\left[ z\MM^{-1}z\right],h) \\
&-\ell(\ell-1)\mathbb{E}_\nu\left[(iz\tilde{u}^\varepsilon,h)(i\tilde{u}^\varepsilon\MM^{-1}z,h)\right] \\
&+\sqrt{\varepsilon}D_u\varphi_1(i\dr_x^2\tilde{u}^\varepsilon-i\tilde{m}^\varepsilon\tilde{u}^\varepsilon) \\
&+\sqrt{\varepsilon} D_u\varphi_2(-i\tilde{\z}^\varepsilon\tilde{u}^\varepsilon) + \varepsilon D_u\varphi_2(i\dr_x^2\tilde{u}^\varepsilon-i\tilde{m}^\varepsilon\tilde{u}^\varepsilon).
\end{split}
\end{equation}
Taking formally the limit in $\vep$ in the above expression, and using Lemma \ref{weaklimmtilde}, we obtain a candidate for the limit generator
given by
\begin{equation} \label{limitgenerator}
\begin{split}
\mathcal{L}\varphi(\tilde{u})=&\ell(\tilde{u},h)^{\ell-1}(i\dr_x^2\tilde{u}+i\vert \tilde{u}\vert^2\tilde{u},h) + \ell(\tilde{u},h)^{\ell-1}(\tilde{u}\mathbb{E}_\nu\left[z\MM^{-1}z\right],h) \\&- \ell(\ell-1)\mathbb{E}_\nu\left[(i\tilde{u}\MM^{-1}z,h)(i\tilde{u}z,h)\right].
\end{split}
\end{equation}
We recall that here, $\varphi(\cdot)=(\cdot,h)^\ell$, $\ell=1,2$,  with $h\in H^1$. Note that the above expression is well defined for $\tilde{u}\in H^3$. 
The next proposition gives some precision on the above convergence.
\begin{prop}\label{propFTP}
Let $\varphi(u)=(u,h)^\ell$,  $\ell=1,2$,  where $h\in H^1$ with compact support.  Let $\varphi_1,\varphi_2$ be the two correctors defined in equations \eqref{varphi1} and \eqref{varphi2}, and let $\varphi^\varepsilon=\varphi+\sqrt{\varepsilon}\varphi_1+\varepsilon\varphi_2$. Then:
\begin{enumerate}
\item 
There is a constant $C$ depending on $\|\phi\|_{\mathcal{L}(L^2;H^1\cap \dot H^{-4})}$ such that
$$\vert \varphi^\varepsilon(\tilde{u}^\varepsilon_{\tilde{\tau}^\varepsilon}, \tilde{\z}^\varepsilon_{\tilde{\tau}^\varepsilon},\tilde{\zeta}^\varepsilon_{\tilde{\tau}^\varepsilon})-\varphi({\tilde{u}^\varepsilon_{\tilde{\tau}^\varepsilon}})\vert \leqslant C \sqrt{\varepsilon}\dbn{\tilde{u}^\varepsilon_{\tilde{\tau}^\varepsilon}}^\ell_{L^2}\dbn{h}^\ell_{L^2} (1+\dbn{\tilde{\z}^\varepsilon_{\tilde{\tau}^\varepsilon}}_E^2+\|\tilde{\zz}^\varepsilon_{\tilde{\tau}^\varepsilon}\|_E^2 ), $$ 
where $E$ is defined in Equation \eqref{defE}.

\item The quantity
$ \mathcal{L}^\varepsilon\varphi^\varepsilon(\tilde{u}^\varepsilon_{\tilde{\tau}^\varepsilon}, \tilde m^\vep_{\tilde \tau^\vep}, \tilde{\z}^\varepsilon_{\tilde{\tau}^\varepsilon},\tilde{\zz}^\varepsilon_{\tilde{\tau}^\varepsilon})-\mathcal{L}\varphi(\tilde{u})$
converges to $0$ in $L^2(\tilde \Omega; L^{\infty}(0,T))$ weak star, as $\vep$ goes to $0$.

\item
Taking $\ell=1$, the quantity  $M^\varepsilon_{t}$ defined by 
\begin{equation}\label{defMepsilon}
\left(M^\varepsilon_{t},h\right) = \varphi^\varepsilon(\tilde{u}^\varepsilon_t,  \tilde{\z}^\varepsilon_t,\tilde{\zz}^\varepsilon_t)-\varphi^\varepsilon(\tilde{u}_0,\tilde{\z}_0,\tilde{\zz}_0) -\int_0^{t}\mathcal{L}^\varepsilon\varphi^\varepsilon(\tilde{u}^\varepsilon_s, 
\tilde m^\vep_s, \tilde{\z}^\varepsilon_s,\dr_t\tilde{\zz}^\varepsilon_s)\,ds
\end{equation}
 is a local martingale for the filtration $\tilde{\mathcal{F}}^\varepsilon$ generated by  the process {$(\tilde{u}^\varepsilon)$}.
\end{enumerate}
\end{prop}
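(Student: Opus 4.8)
The plan is to prove the three assertions in order, using the explicit forms \eqref{varphi1}--\eqref{varphi2} of the correctors together with the inversion formula of Lemma \ref{inverse1}, the $L^\infty$-bound of Proposition \ref{coroborneLinf}, the a priori energy estimate of Proposition \ref{borneespKtilde}, and the weak-limit identifications of Lemmas \ref{weaklimmtilde} and \ref{convweakmu}. For assertion (1), I would bound $\sqrt{\varepsilon}\varphi_1$ and $\varepsilon\varphi_2$ separately. In \eqref{varphi1}, Lemma \ref{inverse1} lets me write $(\dr_x^2)^{-1}(\tilde{\zz}^\varepsilon+\alpha\tilde{\z}^\varepsilon)=\MM^{-1}\tilde{\z}^\varepsilon$, so that, using the conservation of $\|\tilde{u}^\varepsilon\|_{L^2}$ and the embedding $H^1\hookrightarrow L^\infty$, the term $\sqrt{\varepsilon}\varphi_1$ is controlled by $\sqrt{\varepsilon}\,\|\tilde{u}^\varepsilon\|_{L^2}^\ell\|h\|_{L^2}^\ell$ times the $H^3\cap\dot{H}^{-3}$-norm of $(\tilde{\z}^\varepsilon,\tilde{\zz}^\varepsilon)$; by the definition \eqref{defE} of $E$ this last norm is at most $(1+T^\delta)(\|\tilde{\z}^\varepsilon\|_E+\|\tilde{\zz}^\varepsilon\|_E)$, which is absorbed into the claimed factor $(1+\|\tilde{\z}^\varepsilon\|_E^2+\|\tilde{\zz}^\varepsilon\|_E^2)$. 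For $\varepsilon\varphi_2$, Proposition \ref{coroborneLinf} (and the analogous estimate for the second, scalar, $\MM^{-1}$-term when $\ell=2$) bounds the $\MM^{-1}(\cdot-\mathbb{E}_\nu[\cdot])$ factor in $L^\infty$ quadratically in those same norms; since $\varepsilon\leqslant\sqrt{\varepsilon}$, collecting the two contributions yields the stated inequality.

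For assertion (2), I would start from the explicit expression \eqref{genLeps}. The three terms carrying an explicit prefactor, namely $\sqrt{\varepsilon}D_u\varphi_1$, $\sqrt{\varepsilon}D_u\varphi_2$ and $\varepsilon D_u\varphi_2$, are shown to be negligible by bounding them, on the stopped trajectory and as in the proofs of Proposition \ref{boundpolynome} and Lemma \ref{lemmaAldous1}, by a strictly positive power of $\varepsilon$ (coming from $\delta\leqslant\frac18$, so that exponents such as $\frac12-3\delta$ and $1-2\delta$ stay positive) times polynomial expressions in $\tilde{K}^\varepsilon_t$ and in the $\varepsilon^{-\delta}$-bounded norms of $(\tilde{\z}^\varepsilon,\tilde{\zz}^\varepsilon)$; the required $L^2(\tilde{\Omega};L^\infty(0,T))$-control of these polynomial expressions is provided by Proposition \ref{borneespKtilde}. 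For the three leading terms I would pass to the limit one by one: the linear part, after one integration by parts, pairs $\dr_x\tilde{u}^\varepsilon$ against the compactly supported $\dr_x h\in L^2$ and converges because $\tilde{u}^\varepsilon\to\tilde{u}$ a.s.\ in $H^s_{loc}$ while remaining bounded in $H^1$, hence $\dr_x\tilde{u}^\varepsilon\rightharpoonup\dr_x\tilde{u}$ in $L^2_{loc}$; the nonlinear part $-i(\tilde{m}^\varepsilon\tilde{u}^\varepsilon,h)$ converges to $i(\vert\tilde{u}\vert^2\tilde{u},h)$ by Lemma \ref{convweakmu}; and the two $\mathbb{E}_\nu$-terms converge by dominated convergence under $\nu$ since they depend on $\tilde{u}^\varepsilon$ only through $L^2_{loc}$-pairings. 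Combining these convergences of the integrand with the uniform $L^2(\tilde{\Omega};L^\infty(0,T))$ bound supplied by Proposition \ref{borneespKtilde} then yields the weak-star convergence of $\mathcal{L}^\varepsilon\varphi^\varepsilon-\mathcal{L}\varphi(\tilde{u})$ to $0$.

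For assertion (3), I would obtain the martingale property by transfer of law. On the original probability space the process $(u^\varepsilon,m^\varepsilon,\mu^\varepsilon,z^\varepsilon,\zeta^\varepsilon)$ is Markov with generator $\mathcal{L}^\varepsilon$, and since $\varphi^\varepsilon$ is a smooth functional, Itô's formula shows that
$$\varphi^\varepsilon(u^\varepsilon_t,m^\varepsilon_t,\mu^\varepsilon_t,z^\varepsilon_t,\zeta^\varepsilon_t)-\varphi^\varepsilon(u^\varepsilon_0,m^\varepsilon_0,\mu^\varepsilon_0,z^\varepsilon_0,\zeta^\varepsilon_0)-\int_0^t\mathcal{L}^\varepsilon\varphi^\varepsilon\,ds$$
equals the stochastic integral against $dW$, hence is a local martingale for the natural filtration. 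Since the martingale property is expressed by $\mathbb{E}[(M_t-M_s)\Psi]=0$ for every bounded continuous functional $\Psi$ of the trajectory on $[0,s]$, it is a property of the law alone; as the Skorohod process is equal in law to the original one, the quantity $M^\varepsilon_t$ defined in \eqref{defMepsilon} is a local martingale for the filtration $\tilde{\mathcal{F}}^\varepsilon$.

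The main obstacle is assertion (2), and within it the passage to the limit in the coupling $\tilde{m}^\varepsilon\tilde{u}^\varepsilon$: the a.s.\ convergence of $\tilde{u}^\varepsilon$ holds only in $H^s_{loc}$ for $s<1$, so one cannot simply multiply it against the merely weakly convergent $\tilde{m}^\varepsilon$, and one must invoke the dedicated identification of Lemma \ref{convweakmu} (itself resting on Lemma \ref{weaklimmtilde}). A secondary difficulty is to verify that all corrector-induced remainders are genuinely uniform in $\varepsilon$, which is precisely where the stopping time $\tilde{\tau}^\varepsilon_\delta$, the constraint $\delta\leqslant\frac18$, and the control of $(\tilde{\z}^\varepsilon,\tilde{\zz}^\varepsilon)$ in $E$ enter.
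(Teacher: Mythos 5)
Your proposal is correct and follows essentially the same route as the paper's proof: for (1) the corrector bounds via Lemma \ref{inverse1} and Proposition \ref{coroborneLinf}, for (2) the splitting of \eqref{genLeps} minus \eqref{limitgenerator} with the $\sqrt{\varepsilon}$- and $\varepsilon$-weighted remainders killed by $\delta\leqslant\frac18$ and Proposition \ref{borneespKtilde}, and the zero-order terms handled by the strong convergence of $\tilde{u}^\varepsilon_{\tilde{\tau}^\varepsilon}$ in $L^4(\tilde{\Omega},L^\infty(0,T;L^2_{loc}))$ combined with the weak convergence of Lemma \ref{convweakmu}, and for (3) It\^o's formula. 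The only (harmless) variation is in (3), where you transfer the martingale property by equality of laws, whereas the paper applies It\^o directly on the Skorohod space, which is legitimate there because the new processes satisfy the same equations with the same regularity.
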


\begin{proof}
The first point of the Proposition is clear in view of \eqref{varphi1}, \eqref{varphi2} and Proposition~\ref{coroborneLinf}.
For the second point,  we take the difference between \eqref{genLeps} and \eqref{limitgenerator}, and consider each term of the resulting
expression.
We start with the terms of order $\frac{1}{2}$ and 1 in $\varepsilon$; using again \eqref{varphi1}, \eqref{varphi2} and 
Proposition \ref{coroborneLinf}, together with
\eqref{deftaudeltatilde},
$$\sqrt{\varepsilon}D_u\varphi_1(i\dr_x^2\tilde{u}^\varepsilon_{\tilde{\tau}^\varepsilon}-i\tilde{m}^\varepsilon_{\tilde{\tau}^\varepsilon}\tilde{u}^\varepsilon_{\tilde{\tau}^\varepsilon})\leqslant C\varepsilon^{\frac{1}{2}-\delta}\left(\dbn{\tilde{u}^\varepsilon_{\tilde{\tau}^\varepsilon}}_{H^1} + \dbn{\tilde{m}^\varepsilon_{\tilde{\tau}^\varepsilon}}_{L^2}\dbn{\tilde{u}^\varepsilon_{\tilde{\tau}^\varepsilon}}_{L^2}\right)\dbn{\tilde{u}^\varepsilon_{\tilde{\tau}^\varepsilon}}^{\ell-1}_{L^2}\dbn{h}_{H^1}^\ell,$$
and
$$\sqrt{\varepsilon}D_u\varphi_2(-i\tilde{\z}^\varepsilon_{\tilde{\tau}^\varepsilon}\tilde{u}^\varepsilon_{\tilde{\tau}^\varepsilon})
\leqslant C\varepsilon^{1-3\delta} \dbn{\tilde{u}^\varepsilon_{\tilde{\tau}^\varepsilon}}_{L^2}^\ell\dbn{h}^{\ell}_{L^2}.$$
Besides,
\begin{align*}
\varepsilon D_u\varphi_2(i\dr_x^2\tilde{u}^\varepsilon_{\tilde{\tau}^\varepsilon}-i\tilde{m}^\varepsilon_{\tilde{\tau}^\varepsilon}\tilde{u}^\varepsilon_{\tilde{\tau}^\varepsilon}) 
&\leqslant C \varepsilon^{1-2\delta} \left(\dbn{\tilde{u}^\varepsilon_{\tilde{\tau}^\varepsilon}}_{H^1}+\dbn{\tilde{m}^\varepsilon_{\tilde{\tau}^\varepsilon}}_{L^2}\dbn{\tilde{u}^\varepsilon_{\tilde{\tau}^\varepsilon}}_{L^2}\right)\dbn{\tilde{u}^\varepsilon_{\tilde{\tau}^\varepsilon}}_{L^2}^{\ell-1}\dbn{h}_{H^1}^\ell,
\end{align*}
and we may conclude thanks to Proposition \ref{borneespKtilde} that these three terms tend to 0 when $\varepsilon$ goes to 0, strongly in
$L^2(\tilde \Omega; L^{\infty}(0,T))$.
Furthermore, the zero order terms may be treated thanks to the strong convergence of $\tilde{u}^\varepsilon_{\tilde{\tau}^\varepsilon}$
to $\tilde u$ in $L^4(\tilde{\Omega},L^\infty(0,T;L^2_{loc}))$ (see the proof of Lemma \ref{convweakmu}) and the weak convergence 
of Lemma \ref{convweakmu}. 
Indeed, the above convergences, together with the weak convergence of $\tilde{u}^\varepsilon_{\tilde{\tau}^\varepsilon}$ to $\tilde u$ in
$L^4(\tilde{\Omega},L^\infty(0,T;H^1))$ allow to get the weak convergence in $L^2(\Omega; L^\infty(0,T))$ of the term
$(\tilde{u}^\varepsilon_{\tilde{\tau}^\varepsilon},h)(i\dr_x^2 \tilde{u}^\varepsilon_{\tilde{\tau}^\varepsilon} -i\tilde{m}^\varepsilon_{\tilde{\tau}^\varepsilon}\tilde{u}^\varepsilon_{\tilde{\tau}^\varepsilon},h)$  to $(\tilde{u},h)(i\dr_x^2 \tilde u + i\vert\tilde{u}\vert^2\tilde{u},h)$,
while the two remaining zero order terms converge strongly in $L^2(\tilde \Omega;L^\infty(0,T))$.

The last point of the Proposition stems from a simple application of the It\^o formula, given the smoothness of the processes 
$\tilde{u}^\varepsilon,\tilde{m}^\varepsilon,\tilde{z}^\varepsilon,\tilde{\zeta}^\varepsilon$.
\end{proof}

\subsection{Convergence in law}

We are now in position to prove the convergence in law of the process $\tilde u^\vep$ to the unique solution of \eqref{Schrostoc} with
$u(0)=u_0$.

\vskip 0.1 in 
\noindent
{\em Identification of the limit generator.}
We define 
\begin{equation} \label{noyau}
k(x,y)=\mathbb{E}_\nu\left[z(x)\MM^{-1}z(y)+z(y)\MM^{-1}z(x)\right],
\end{equation}
 and $F(x)=k(x,x)$. The next lemma relates the kernel $k$ and the Hilbert-Schmidt operator $\phi$.
\begin{lemma}\label{k(x,y)}
For $x,y\in \ER$ and $(z,\zeta)$ solution of equation \eqref{equationz2}, we have:
\begin{equation}\label{expressionk(x,y)}
-k(x,y)=\sum_{k\in\EN}(\dr_x^2)^{-1}(\phi e_k)(x)(\dr_x^2)^{-1}(\phi e_k)(y),
\end{equation}
where we recall that $(e_k)_{k\in \EN}$ is a complete orthonormal system in $L^2(\ER;\ER)$.
\end{lemma}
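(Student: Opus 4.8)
The plan is to exploit the stationarity of $(z,\zeta)$ under $\nu$ together with the explicit form of $\MM^{-1}z$ given by Lemma \ref{inverse1}. Setting $w=\MM^{-1}z=(\dr_x^2)^{-1}(\zeta+\alpha z)$, I recall from the proof of Lemma \ref{inverse1} that $w$ solves the It\^o equation
$$dw=z\,dt+(\dr_x^2)^{-1}\phi\, dW_t,$$
whose martingale part is $\sum_{k\in\EN}(\dr_x^2)^{-1}(\phi e_k)\,d\beta_k(t)$. Since by definition $k(x,y)=\mathbb{E}_\nu[z(x)w(y)+z(y)w(x)]$, the idea is to relate this symmetric drift-type quantity to the quadratic covariation of $w$, which is exactly what carries the right-hand side of \eqref{expressionk(x,y)}.

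Concretely, I would apply the It\^o formula to the product $w(t,x)w(t,y)$ along the stationary solution, obtaining
$$d\big(w(x)w(y)\big)=\big(z(x)w(y)+z(y)w(x)\big)\,dt+d\langle w(x),w(y)\rangle+dN_t,$$
where $N_t$ is a mean-zero martingale and, by independence of the $\beta_k$,
$$d\langle w(x),w(y)\rangle=\sum_{k\in\EN}(\dr_x^2)^{-1}(\phi e_k)(x)\,(\dr_x^2)^{-1}(\phi e_k)(y)\,dt.$$
Because the process is stationary under $\nu$, the map $t\mapsto\mathbb{E}_\nu[w(t,x)w(t,y)]$ is constant; integrating the It\^o identity, taking $\mathbb{E}_\nu$, and using that the integrand expectations are $t$-independent, the coefficient of $t$ must vanish, which gives
$$0=k(x,y)+\sum_{k\in\EN}(\dr_x^2)^{-1}(\phi e_k)(x)\,(\dr_x^2)^{-1}(\phi e_k)(y),$$
i.e.\ precisely \eqref{expressionk(x,y)}.

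The computation is short, so the only genuine work is to justify each step under the standing assumption $\phi\in\mathcal{L}_2(L^2,H^3\cap\dot{H}^{-4})$. I expect the main (mild) obstacle to be the rigorous handling of the pointwise evaluations $z(x)$, $w(x)$ and of the series defining the covariation: since the statement is pointwise in $x,y$, I would first check via the embedding $H^1\hookrightarrow L^\infty$, the regularizing effect of $(\dr_x^2)^{-1}$, and the Hilbert--Schmidt bounds of Lemma \ref{inegalites generales} that the relevant objects are continuous in space, so that $\sum_{k}(\dr_x^2)^{-1}(\phi e_k)(x)(\dr_x^2)^{-1}(\phi e_k)(y)$ converges absolutely and defines a continuous kernel. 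As an alternative (or cross-check), the same identity follows directly from the stationary representation \eqref{donnee_init_z} and the It\^o isometry exactly as in \eqref{esp_nu}, by computing $\mathbb{E}_\nu[z(x)(\dr_x^2)^{-1}\zeta(y)]$ and $\mathbb{E}_\nu[z(x)(\dr_x^2)^{-1}z(y)]$ separately and summing; I would nevertheless favour the martingale/stationarity route as the primary argument, since it avoids evaluating the time integrals of $S_\alpha$ explicitly.
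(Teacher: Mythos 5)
Your proof is correct, but it takes a genuinely different route from the paper. The paper proceeds by brute-force computation: it expresses the stationary covariances via the It\^o isometry as in \eqref{esp_nu}, inserts the explicit Fourier multipliers \eqref{SalphaBF}--\eqref{SalphaHF} of $S_\alpha$, carries out the time integrals of products of damped $\sinh$/$\sin$ kernels separately on the low- and high-frequency regions, and obtains explicit kernels $K_1(\xi,\eta)$, $K_2(\xi,\eta)$ in \eqref{defK_1}--\eqref{defK_2} whose symmetrization collapses, after a small algebraic miracle, to $K(\xi,\eta)+K(\eta,\xi)=-\xi^{-2}\eta^{-2}$, which is \eqref{expressionk(x,y)} in Fourier variables. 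Your argument replaces all of this with a stationarity identity: since $w=\MM^{-1}z$ satisfies $dw=z\,dt+(\dr_x^2)^{-1}\phi\,dW_t$ (exactly the martingale identity underlying Lemma \ref{inverse1}), the It\^o product rule applied to $w(t,x)w(t,y)$ along the stationary solution forces the drift---which is precisely $k(x,y)$, thanks to the symmetrization built into \eqref{noyau}---to cancel the quadratic covariation $\sum_k(\dr_x^2)^{-1}(\phi e_k)(x)\,(\dr_x^2)^{-1}(\phi e_k)(y)$. This is in effect the stationary Lyapunov equation for the pair $(z,w)$; it explains conceptually why the $\alpha$-dependence disappears from the symmetrized kernel, a fact the paper's computation only verifies a posteriori, and it avoids the explicit form of $S_\alpha$ entirely, so it would survive in settings (higher dimension, other damped operators) where those time integrals are unpleasant. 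The technical points you flag are the right ones and are easily discharged under $\phi\in\mathcal{L}_2(L^2,H^3\cap\dot{H}^{-4})$: one has $\sum_k\dbn{(\dr_x^2)^{-1}\phi e_k}_{L^\infty}^2\leqslant C\sum_k\dbn{(\dr_x^2)^{-1}\phi e_k}_{H^1}^2<\infty$ via $H^1\hookrightarrow L^\infty$, so the covariation series converges absolutely to a continuous kernel, while $z$ and $w$ take values in spaces of continuous functions with Gaussian moments of all orders under $\nu$, so the pointwise evaluations and the zero-mean property of the martingale part are legitimate; constancy of $t\mapsto\mathbb{E}_\nu[w(t,x)w(t,y)]$ then yields your identity. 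Your fallback route via \eqref{donnee_init_z} and the It\^o isometry with explicit time integrals is, for the record, exactly the paper's proof.
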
 
The proof of Lemma \ref{k(x,y)} is technical and is postponed to the Appendix. Now, we recall that in order to identify the limit generator 
$\mathcal{L}$, we only need to apply it to quadratic test functions. Considering first  $\varphi(u)=(u,h)$, with
$h\in H^1$ with compact support, we use \eqref{limitgenerator} and \eqref{expressionk(x,y)} to get:
\begin{align} \label{Llimlin}
\mathcal{L}\varphi(\tilde{u}) &= \big(i\dr_x^2\tilde{u}+i\vert\tilde{u}\vert^2\tilde{u}-\frac{1}{2}\tilde{u}F(x),h\big) 
=(D_{u}\varphi)\big(i\dr_x^2\tilde{u}+i\vert\tilde{u}\vert^2\tilde{u}-\frac{1}{2}\tilde{u}F(x)\big),
\end{align} 
where we recall that $F(x)=\sum_{k\in\EN}\big((\dr_x^2)^{-1}\phi e_k\big)^2(x)$.
Next, considering $\varphi(u)=(u,h)^2$ gives:
\begin{align} 
\mathcal{L}\varphi(\tilde{u})=
&\, (D_{u}\varphi )\big(i\dr_x^2\tilde{u}+i\vert\tilde{u}\vert^2\tilde{u}-\frac{1}{2}\tilde{u}F(x)\big) \nonumber\\
&-\int_\ER\int_\ER   Re    \left(i\tilde{u}(x)\bar{h}(x)\right)   Re    \left(i\tilde{u}(y)\bar{h}(y)\right)k(x,y)dxdy. \label{Llimquad}
\end{align}
Note that the second term of the right-hand-side above may be written, thanks to Lemma~\ref{k(x,y)} as:
\begin{align*}
\sum_{k\in\EN}\left(i\tilde u(\dr_x^2)^{-1}(\phi e_k),h\right)^2 
=&\frac{1}{2}\text{Tr}\, (i\tilde u(\dr_x^2)^{-1}\phi)^*(D^2_u\varphi)(i\tilde u(\dr_x^2)^{-1}\phi).
\end{align*}
We deduce that
\begin{equation}\label{Llim}
\begin{split}
\mathcal{L}\varphi(\tilde{u})=&(D_{u}\varphi) \big(i\dr_x^2\tilde{u}+i\vert\tilde{u}\vert^2\tilde{u}-\frac{1}{2}\tilde{u}F(x)\big)\\
&+\frac{1}{2}\text{Tr}\, (i\tilde u(\dr_x^2)^{-1}\phi)^*(D^2_u\varphi)(i\tilde u(\dr_x^2)^{-1}\phi),%+
\end{split}
\end{equation}
which is the generator of the transition semi-group associated with equation \eqref{Schrostoc}.

\vskip 0.1 in
\noindent
{\em Convergence.}
We first prove the following lemma.
%We want to show that the limit process is still a martingale. This is the following statement: 
\begin{lemma}\label{martlimite}
Let $\varphi(u)=(u,h)$ where $h\in H^1$ with compact support. Then the process $M_t$ defined by:
\begin{equation}\label{defMt}
(M_t,h)=\varphi(\tilde{u}_t)-\varphi(\tilde{u}_0)-\int_0^t\mathcal{L}\varphi(\tilde{u}_s)ds
\end{equation}
is a martingale for the filtration $\mathcal{G}_s$ generated by $\tilde{u}_s$. 
\end{lemma}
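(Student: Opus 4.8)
The plan is to deduce the martingale property of $(M_t,h)$ by passing to the limit $\varepsilon\to 0$ in the approximate martingale $M^\varepsilon$ furnished by Proposition~\ref{propFTP}(3), applied with $\ell=1$ and $\varphi(u)=(u,h)$. Fix two times $0\leqslant a<b\leqslant T$, instants $r_1\leqslant\cdots\leqslant r_p\leqslant a$, and a bounded continuous function $G$ on $(H^s_{loc})^p$; set $\Phi_a=G(\tilde u_{r_1},\dots,\tilde u_{r_p})$ together with its natural approximation $\Phi_a^\varepsilon=G(\tilde u^\varepsilon_{r_1},\dots,\tilde u^\varepsilon_{r_p})$. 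Since functionals of this form generate $\mathcal G_a$, it suffices to establish that $\mathbb E\big[((M_b,h)-(M_a,h))\,\Phi_a\big]=0$, where $\mathbb E$ is the expectation on $(\tilde\Omega,\tilde{\mathcal F},\tilde{\PP})$.

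First I would upgrade the local martingale of Proposition~\ref{propFTP}(3) to a genuine martingale after stopping at $\tilde\tau^\varepsilon$: on $[0,\tilde\tau^\varepsilon]$ the estimate of Proposition~\ref{propFTP}(1) controls $\varphi^\varepsilon-\varphi$, while the bounds behind Proposition~\ref{propFTP}(2) control $\mathcal L^\varepsilon\varphi^\varepsilon$ by a polynomial in $\tilde K^\varepsilon$, so that the uniform moment bound of Proposition~\ref{borneespKtilde} makes $M^\varepsilon_{\cdot\wedge\tilde\tau^\varepsilon}$ uniformly integrable, hence a true $\tilde{\mathcal F}^\varepsilon$-martingale. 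As $\Phi_a^\varepsilon$ is bounded and $\tilde{\mathcal F}^\varepsilon_a$-measurable, this yields, for every $\varepsilon$,
\begin{equation}
\mathbb E\big[(M^\varepsilon_{b\wedge\tilde\tau^\varepsilon}-M^\varepsilon_{a\wedge\tilde\tau^\varepsilon})\,\Phi_a^\varepsilon\big]=0 .
\end{equation}
It then remains to pass to the limit in the three contributions of $M^\varepsilon_{b\wedge\tilde\tau^\varepsilon}-M^\varepsilon_{a\wedge\tilde\tau^\varepsilon}=\varphi^\varepsilon(\tilde u^\varepsilon_{b\wedge\tilde\tau^\varepsilon},\cdot)-\varphi^\varepsilon(\tilde u^\varepsilon_{a\wedge\tilde\tau^\varepsilon},\cdot)-\int_{a\wedge\tilde\tau^\varepsilon}^{b\wedge\tilde\tau^\varepsilon}\mathcal L^\varepsilon\varphi^\varepsilon\,dr$, multiplied by $\Phi_a^\varepsilon$.

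For the two boundary terms, Proposition~\ref{propFTP}(1) replaces $\varphi^\varepsilon$ by $\varphi$ up to an error that is $O(\sqrt\varepsilon)$ on $[0,\tilde\tau^\varepsilon]$ (using the moment bounds of $\tilde{\z}^\varepsilon,\tilde{\zz}^\varepsilon$ in $E$); the convergence $\tilde\tau^\varepsilon\to T$ in probability gives $b\wedge\tilde\tau^\varepsilon\to b$, and the a.s.\ convergence $\tilde u^\varepsilon\to\tilde u$ in $C([0,T],H^s_{loc})$ together with the continuity of $\varphi(\cdot)=(\cdot,h)$ on $H^s_{loc}$ (here $h\in H^1$ has compact support) yields $\varphi(\tilde u^\varepsilon_{b\wedge\tilde\tau^\varepsilon})\to\varphi(\tilde u_b)$ a.s., and likewise at $a$. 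The conservation of mass $\|\tilde u^\varepsilon\|_{L^2}=\|u_0\|_{L^2}$ bounds $|\varphi(\tilde u^\varepsilon)|$ and $\Phi_a^\varepsilon$, so dominated convergence transfers these limits under $\mathbb E$. The delicate term is the time integral, because the natural test function $\psi^\varepsilon:=\ind_{[a\wedge\tilde\tau^\varepsilon,\,b\wedge\tilde\tau^\varepsilon]}\Phi_a^\varepsilon$ depends on $\varepsilon$, whereas Proposition~\ref{propFTP}(2) only gives weak-$\star$ convergence of $\mathcal L^\varepsilon\varphi^\varepsilon-\mathcal L\varphi(\tilde u)$ to $0$ in $L^2(\tilde\Omega;L^\infty(0,T))$, i.e.\ against fixed test functions in the predual $L^2(\tilde\Omega;L^1(0,T))$. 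I would resolve this by the splitting $\mathcal L^\varepsilon\varphi^\varepsilon\,\psi^\varepsilon=(\mathcal L^\varepsilon\varphi^\varepsilon-\mathcal L\varphi(\tilde u))\psi^\varepsilon+\mathcal L\varphi(\tilde u)\,\psi^\varepsilon$, and by checking that $\psi^\varepsilon\to\ind_{[a,b]}\Phi_a$ strongly in $L^2(\tilde\Omega;L^1(0,T))$ (the indicators converge since $a\wedge\tilde\tau^\varepsilon\to a$ and $b\wedge\tilde\tau^\varepsilon\to b$ in probability, and $\Phi_a^\varepsilon\to\Phi_a$ boundedly a.s.). Writing $(\mathcal L^\varepsilon\varphi^\varepsilon-\mathcal L\varphi(\tilde u))\psi^\varepsilon=(\mathcal L^\varepsilon\varphi^\varepsilon-\mathcal L\varphi(\tilde u))\ind_{[a,b]}\Phi_a+(\mathcal L^\varepsilon\varphi^\varepsilon-\mathcal L\varphi(\tilde u))(\psi^\varepsilon-\ind_{[a,b]}\Phi_a)$, the first piece vanishes by weak-$\star$ convergence against the fixed limit, and the second by the uniform $L^2(\tilde\Omega;L^\infty)$ bound (weak-$\star$ limits are bounded) times the vanishing predual norm; the remaining term $\mathcal L\varphi(\tilde u)\,\psi^\varepsilon$ converges to $\big(\int_a^b\mathcal L\varphi(\tilde u_r)\,dr\big)\Phi_a$ by dominated convergence.

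Collecting the three limits in the displayed identity gives $\mathbb E\big[(\varphi(\tilde u_b)-\varphi(\tilde u_a)-\int_a^b\mathcal L\varphi(\tilde u_r)\,dr)\Phi_a\big]=0$ for all admissible $\Phi_a$, which is exactly the $\mathcal G_a$-martingale property of $(M_t,h)$. I expect the main obstacle to be this passage to the limit in the integral term with an $\varepsilon$-dependent test function; the rest is a routine combination of the a.s.\ Skorohod convergences, the conservation of the $L^2$-norm, and the explicit corrector estimates of Proposition~\ref{propFTP}.
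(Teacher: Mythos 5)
Your proposal is correct and takes essentially the same route as the paper's proof: you test the stopped approximate martingale of Proposition \ref{propFTP}(3) against bounded cylindrical functionals of the process at earlier times, control the correctors via Proposition \ref{propFTP}(1) and the convergence in probability of $\tilde{\tau}^\varepsilon$ (the paper's terms $T_1$, $T_2$, $T_3$), and your splitting for the $\varepsilon$-dependent test function $\psi^\varepsilon$ is precisely how the paper treats its term $T_4$, pairing the weak-star convergence of Proposition \ref{propFTP}(2) with the strong convergence of $g(\tilde{u}^\varepsilon_{\tilde{\tau}^\varepsilon}(s_1),\cdots,\tilde{u}^\varepsilon_{\tilde{\tau}^\varepsilon}(s_n))$ and of the indicator $\mathbf{1}_{[\sigma\wedge\tilde{\tau}^\varepsilon,\,t\wedge\tilde{\tau}^\varepsilon]}$ in the predual. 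The only cosmetic difference is that you isolate the predual-norm argument explicitly where the paper invokes the $L^4(\tilde{\Omega})$ and $L^4(\tilde{\Omega};L^1(0,T))$ convergences directly; the substance is identical.
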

\begin{proof}
Let $g\in \mathcal{C}_b\left((H^s_{loc})^n\right)$, let {$0\leqslant s_1<s_2<\cdots<s_n=\sigma <t$}, and let $\varphi_1$ and $\varphi_2$ be the correctors
defined in \eqref{varphi1} and \eqref{varphi2} respectively, with $\ell=1$.  Then, thanks to Proposition~\ref{propFTP}, 
$$\Es \big[(M^\varepsilon_{\tilde{\tau}^\varepsilon}(t)-M^\varepsilon_{\tilde{\tau}^\varepsilon}(\sigma),h)
g\left(\tilde{u}^\varepsilon_{\tilde{\tau}^\varepsilon}(s_1),\cdots, \tilde{u}^\varepsilon_{\tilde{\tau}^\varepsilon}(s_n)\right) \big]=0,$$
 so we deduce that
\begin{align*}
&\mathbb{E}\Big[\big(\varphi(\tilde{u}_t)-\varphi(\tilde{u}_\sigma)-\int_\sigma^t\mathcal{L}\varphi(\tilde{u}_r)dr\big)g\left(\tilde{u}^\varepsilon_{\tilde{\tau}^\varepsilon}(s_1),\cdots,\tilde{u}^\varepsilon_{\tilde{\tau}^\varepsilon}(s_n)\right)\Big] \\
&=\mathbb{E}\Big[\big(\left(\varphi(\tilde{u}_t)-\varphi(\tilde{u}^\varepsilon_{\tilde{\tau}^\varepsilon}(t))\right) -\left(\varphi(\tilde{u}_\sigma)-\varphi(\tilde{u}^\varepsilon_{\tilde{\tau}^\varepsilon}(\sigma))\right)\\
 &-\sqrt{\varepsilon}\left(\varphi_1(\tilde{u}^\varepsilon_{\tilde{\tau}^\varepsilon}(t))-\varphi_1(\tilde{u}^\varepsilon_{\tilde{\tau}^\varepsilon}(\sigma))\right) -\varepsilon\left(\varphi_2(\tilde{u}^\varepsilon_{\tilde{\tau}^\varepsilon}(t))-\varphi_2(\tilde{u}^\varepsilon_{\tilde{\tau}^\varepsilon}(\sigma))\right) \\
 &-\big(\int_\sigma^{\sigma\wedge\tilde{\tau}^\varepsilon}\mathcal{L}\varphi(\tilde{u}_r)dr-\int_t^{t\wedge\tilde{\tau}^\varepsilon}\mathcal{L}\varphi(\tilde{u}_r)dr\big) \\
 &{-\int_{\sigma\wedge\tilde{\tau}^\varepsilon}^{t\wedge\tilde{\tau}^\varepsilon}\mathcal{L}\varphi(\tilde{u}_r)-\mathcal{L}^\varepsilon\varphi^\varepsilon(\tilde{u}^\varepsilon_{\tilde{\tau}^\varepsilon}(r),\tilde{\z}^\varepsilon_{\tilde{\tau}^\varepsilon}(r),\tilde{\zz}^\varepsilon_{\tilde{\tau}^\varepsilon}(r))dr\big)g(\tilde{u}^\varepsilon_{\tilde{\tau}^\varepsilon}(s_1),\cdots,\tilde{u}^\varepsilon_{\tilde{\tau}^\varepsilon}(s_n))\Big] }\\
 &=\mathbb{E}\left[T_1+T_2+T_3+T_4\right].
\end{align*}
Note that, for convenience,  we did not write the dependence of $\varphi_1$ and $\varphi_2$ on $\tilde{m}^\varepsilon, \tilde{\z}^\varepsilon$ 
and $\tilde{\zz}^\varepsilon$.
The convergence of $\tilde{u}^\vep_{\tau^\vep}$ to $\tilde u$, the boundedness of $g$, and Proposition \ref{propFTP} imply
$$\lim_{\varepsilon\to0}\mathbb{E}\left[T_1+T_2\right]=0.$$
 Regarding $T_3$, in view of 
\eqref{Llimlin} and the boundedness of $g$,
\begin{align*}
\mathbb{E}\Big[\big(\int_\sigma^{\sigma\wedge\tilde{\tau}^\varepsilon} &\mathcal{L}\varphi(\tilde{u}_r)dr\big) 
 g(\tilde{u}^\varepsilon_{\tilde{\tau}^\varepsilon}(s_1),\cdots,\tilde{u}^\varepsilon_{\tilde{\tau}^\varepsilon}(s_n))\Big] 
\\
\leqslant & \, C \dbn{h}_{H^1} \mathbb{E}\big[(\sigma\wedge\tilde{\tau}^\varepsilon-\sigma)\dbn{\tilde{u}}_{L^\infty([0,T],H^1)}\big] 
\leqslant C \, \mathbb{E}\big[(\sigma\wedge\tilde{\tau}^\varepsilon-\sigma)^2\big]^\frac{1}{2},
\end{align*} 
since $\tilde{u}\in L^4(\tilde{\Omega},L^\infty([0,T],H^1))$.  
The  same bound obviously holds for the other term of $T_3$, and since $\tilde{\tau}^\varepsilon$ converges in probability to $T$, we deduce that
 $\lim_{\vep \to 0} \mathbb{E}[T_3]=0$. Finally, the convergence of $\Es[T_4]$ to zero is obtained thanks to Proposition \ref{propFTP} (2),
 the strong convergence  of $g(\tilde{u}^\varepsilon_{\tilde{\tau}^\varepsilon}(s_1),\cdots,\tilde{u}^\varepsilon_{\tilde{\tau}^\varepsilon}(s_n))$ to $g(\tilde u(s_1),\cdots,\tilde u(s_n))$ in $L^4(\tilde \Omega)$ and of $\mathbf{1}_{[\sigma \wedge \tilde \tau^\vep,t\wedge  \tilde \tau^\vep]}$
 to $\mathbf{1}_{[\sigma, t]}$ in $L^4(\tilde \Omega;L^1(0,T))$.
 
We thus proved that 
\begin{equation}
\lim_{\varepsilon\to0}\mathbb{E}\Big[\big(\varphi(\tilde{u}_t)-\varphi(\tilde{u}_\sigma)-\int_\sigma^t\mathcal{L}\varphi(\tilde{u}_r)dr\big)g\left(\tilde{u}^\varepsilon_{\tilde{\tau}^\varepsilon}(s_1),\cdots,\tilde{u}^\varepsilon_{\tilde{\tau}^\varepsilon}(s_n)\right)\Big]  =0,
\end{equation}
and the continuity of $g$ and the a.s. convergence of $\tilde{u}^\varepsilon_{\tilde{\tau}^\varepsilon}$ allow to conclude
that the process $M_t$ is a martingale for the filtration $\mathcal{G}_s$ generated by $\tilde{u}_s$.
\end{proof}

Similarly to Lemma \ref{martlimite}, we can take the limit  in $\varepsilon$ of the quadratic variation of $M^\varepsilon$ defined  in \eqref{defMepsilon} to get
\begin{equation} \label{varquad}
\left(\langle M,M\rangle_th,h\right)=\int_0^t \left(\mathcal{L}(\varphi^2)-2\varphi\mathcal{L}\varphi\right)(\tilde{u}_s)ds,
\end{equation}
where, still, $\varphi(u)=(u,h)$.
Note that this requires the use of Proposition \ref{propFTP}, as in the proof of Lemma \ref{martlimite}, but with $\ell=1$ and $\ell=2$,
together with an expression similar to \eqref{varquad} for $M^\vep_t$ and $\mathcal{L}^\vep$.
Equation \eqref{Llim}  then implies that 
$$\left(\mathcal{L}(\varphi^2)-2\varphi\mathcal{L}\varphi\right)(\tilde{u}) =\frac{1}{2}\text{Tr}\, (i\tilde u(\dr_x^2)^{-1}\phi)^*(D^2_u(\varphi^2))(i\tilde u(\dr_x^2)^{-1}\phi).$$
Thus,  the martingale $$M_t=\tilde{u}_t-u_0 -\int_0^t \big(i\dr_x^2\tilde{u}_s +i\vert\tilde{u}_s\vert^2\tilde{u}_s-\frac{1}{2}\tilde{u}_s F(x) \big)ds$$
has the quadratic variation:
$$\int_0^t \text{Tr}\left(i\tilde{u}_s(\dr_x^2)^{-1}\phi\right)\left(i\tilde{u}_s(\dr_x^2)^{-1}\phi\right)^*ds.$$
Thus,  using the martingale representation theorem, and up to enlarge the probability space, there exists a cylindrical Wiener process $\bar{W}$ such that 
 $$M_t=\int_0^ti\tilde{u}_s(\dr_x^2)^{-1}\phi d\bar{W}_s,$$
 so $\tilde{u}$ is a martingale solution of \eqref{Schrostoc}.
 
 \vskip 0.1 in
 \noindent
{\em Uniqueness.}
So far we have proved the convergence in law to Equation \eqref{Schrostoc} for a subsequence,  we need to conclude with the convergence of the whole sequence.
This is however a simple consequence of the fact that $\tilde u$ has paths a.s. in $L^\infty(0,T,H^1)$, and the (obvious) uniqueness
of the solution of \eqref{Schrostoc} with paths in $L^\infty(0,T,H^1)$ a.s.  This, together with the convergence in probability of $\tau^\vep$ to $T$
leads to the convergence in law of $(u^\vep)$ to the solution of \eqref{Schrostoc}.

\subsection{Convergence in probability}

The aim here is to prove that we can actually get a better convergence of the solution $u^\varepsilon$ of \eqref{systemetranslate} to the solution $u$ of \eqref{Schrostoc}, as announced in Theorem \ref{thmfinal}. It is not usually the case in approximation diffusion problems, and it is due here to the particular form of the process $(z^\varepsilon,\zeta^\varepsilon)$.

Note that the correctors $\varphi_1$ and $\varphi_2$ introduced in \eqref{varphi1} and \eqref{varphi2} give us an explicit expression 
of the martingale $M_t^\varepsilon$ defined in \eqref{defMepsilon}. Indeed, the process $(z^\varepsilon,\zeta^\varepsilon)$
being a solution of \eqref{equationzeps},  where $W_t$ is the cylindrical Wiener process introduced in the system \eqref{e0.1},
when we apply the Perturbed Test Function method to $\varphi(u)=(u,h)$ for $h\in H^1$ with compact support, the first corrector 
$\sqrt{\varepsilon}\varphi_1(u^\varepsilon)=\sqrt{\varepsilon}(iu^\varepsilon((\dr_x^2)^{-1}\zeta^\vep+\alpha (\dr_x^2)^{-1} z^\vep),h)$ contributes to the martingale part according to Equation \eqref{equationzeps} satisfied by $(z^\varepsilon,\zeta^\vep)$, adding 
\begin{equation}\label{Xepsilon}
\begin{split}
X^\varepsilon_t=\int_0^t\left(iu^\varepsilon(\dr_x^2)^{-1}\phi dW_s,h\right).
\end{split}
\end{equation}
In the same way,  according to Lemma \ref{inverse2}, the martingale part coming from the second corrector  $\varepsilon \varphi_2(u^\varepsilon)=-\varepsilon(u^\varepsilon\MM^{-1}(z^\varepsilon\MM^{-1}z^\varepsilon-\mathbb{E}_\nu[z\MM^{-1}z],h)$ is  
\begin{equation}\label{Yepsilon}
\begin{split}
Y^\varepsilon_t=& {\sqrt{\varepsilon}}    Re   \!\! \int_{s=0}^t \!\int_\ER u^\varepsilon(s)\bar{h} \! \!\int_{t'=0}^\infty  \!\!\big[ \!\left(S_\alpha(t')(z^\varepsilon, \phi dW_s)\right)_1 \!\!
 (\dr_x^2)^{-1}\!\left(\left(S_\alpha(t')(z^\varepsilon, \zeta^\varepsilon)\right)_2\!+\alpha \!\left(S_\alpha(t')(z^\varepsilon, \zeta^\varepsilon)\right)_1\right) \\
&+\left(S_\alpha(t')(z^\varepsilon, \zeta^\varepsilon)\right)_1 (\dr_x^2)^{-1}\left(\left(S_\alpha(t')(z^\varepsilon, \phi dW_s)\right)_2+\alpha \left(S_\alpha(t')(z^\varepsilon, \phi dW_s)\right)_1\right)\big] dt'dx.
\end{split}
\end{equation}
Thus the previous section and more precisely the third item of Proposition \ref{propFTP} ensures that 
\begin{equation}\label{egalitemartingale}
\varphi^\varepsilon(u^\varepsilon_t,z^\vep_t,\zeta^\vep_t)-\varphi^\varepsilon(u_0,z^\vep_0,\zeta^\vep_0)-\int_0^t\mathcal{L}^\varepsilon\varphi^\varepsilon(u^\varepsilon_s,z^\vep_s,\zeta^\vep_s)ds = X^\varepsilon_t+Y^\varepsilon_t,
\end{equation}
with $\varphi^\varepsilon=\varphi+\sqrt{\varepsilon}\varphi_1+\varepsilon\varphi_2$.
The quadratic variation of $Y^\varepsilon_t$ is estimated thanks to Lemma \ref{inegalites generales} and we obtain:
 $$d\langle Y^\varepsilon,Y^\varepsilon\rangle_{t\wedge \tau^\varepsilon} \leqslant \varepsilon  (1+\varepsilon^{-4\delta}) C_\phi \dbn{u^\varepsilon}^2_{L^2}\dbn{h}_{H^1}^2$$ where $\tau^\varepsilon$ is the stopping time defined in \eqref{deftaudelta}. We deduce from Doob's inequality that
\begin{equation}\label{Yepsi to 0}
\lim_{\varepsilon\to0}\mathbb{E}\Big[\big|\sup_{t\in[0,\tau^\varepsilon]}Y^\varepsilon_t\big|\Big]=0.
\end{equation}

We are now ready to end the proof of Theorem \ref{thmfinal}.

\begin{proof}[Proof of Theorem \ref{thmfinal}]
Let $(u^{\varepsilon_k},m^{\vep_k})_k$ and $(u^{\eta_k},m^{\eta_k})_k$ be two subsequences of the family of solutions
$(u^\varepsilon, m^\vep)_\varepsilon$, of \eqref{systemetranslate}. Then the proof of  Proposition \ref{tensionuplet} ensures that the sequence 
$$(u^{\varepsilon_k}, u^{\eta_k}, M^{\varepsilon_k}, M^{\eta_k}, \sqrt{\varepsilon_k}\mathcal{V}^{\varepsilon_k},\sqrt{\eta_k}\mathcal{V}^{\eta_k},z,\zeta,(\beta_n)_{n\in\EN})_k$$ 
is tight in the space $(\mathcal{C}^0([0,T],H^s_{loc}))^2\times  (\mathcal{C}^0([0,T],H^{-\sigma}_{loc}))^2\times (H^s([0,T],H^{-\sigma}_{loc}))^2\times E\times E\times \mathcal{C}([0,T])^\EN$
 for $s<1,\sigma>0$.  According to Skorohod Theorem, there exists a probability space $(\tilde{\Omega},\tilde{\mathcal{F}},\tilde{\PP})$ 
 and a sequence of random variables on $\tilde{\Omega}$, with the same  laws,  which converges almost surely   in this space as 
 $\varepsilon$ goes to $0$.  We denote respectively by $\tilde{u}$ and $\tilde{v}$ the almost sure limits of the new subsequences 
 $\tilde{u}^{\varepsilon_k}$ and $\tilde{u}^{\eta_k}$.  For any $n\in\EN$ we also denote by $\tilde{\beta}_n$ the limit of the subsequence 
 $(\tilde{\beta}^k_n)$, and we define:$$\tilde{W}^k_t(x)=\sum_{n\in\EN}e_n(x)\tilde{\beta}^k_n(t), \qquad \tilde{W}_t(x)=\sum_{n\in\EN}e_n(x)\tilde{\beta}_n(t).$$
According to section \ref{martingale problem}, and \eqref{Xepsilon}, \eqref{Yepsilon} and \eqref{egalitemartingale}, 
the stopped process $\tilde{u}^{\varepsilon_k}_{\tilde{\tau}^{\varepsilon_k}}$, where $\tilde{\tau}^\varepsilon$ is the stopping 
time which appears in Proposition \ref{borneespKtilde},  satisfies for $\varphi(u)=(u,h)$, where $h\in H^1(\ER)$ with compact support,
\begin{equation}\label{probmart}
\begin{split}
\varphi^{\varepsilon_k}(\tilde{u}^{\varepsilon_k}_{\tilde{\tau}^{\varepsilon_k}}(t),\tilde{z}^{\varepsilon_k}_{\tilde{\tau}^{\varepsilon_k}}(t),
\tilde{\zeta}^{\varepsilon_k}_{\tilde{\tau}^{\varepsilon_k}}(t))- &\varphi^{\varepsilon_k}(\tilde{u}_0,\tilde z^\vep_0,\tilde \zeta^\vep_0)-\int_0^{t\wedge\tilde{\tau}^{\varepsilon_k}}\mathcal{L}^{\varepsilon_k}\varphi^{\varepsilon_k}(\tilde{u}^{\varepsilon_k}_{\tilde{\tau}^{\varepsilon_k}}, \tilde{z}^{\varepsilon_k}_{\tilde{\tau}^{\varepsilon_k}},\tilde{\zeta}^{\varepsilon_k}_{\tilde{\tau}^{\varepsilon_k}})(s)ds \\
=& \int_0^{t\wedge\tilde{\tau}^{\varepsilon_k}}\left(i\tilde{u}^{\varepsilon_k}_{\tilde{\tau}^{\varepsilon_k}}(s)(\dr_x^2)^{-1}\phi d\tilde{W}^{k}_s,h\right) +\tilde{Y}^{\varepsilon_k}_{\tilde{\tau}^{\varepsilon_k}},
\end{split}
\end{equation}
where $\tilde{Y}^{\varepsilon}$ is defined as $Y^\varepsilon$ in \eqref{Yepsilon} but using the new random variables.  The stopped process  $\tilde{u}^{\eta_k}_{\tilde{\tau}^{\eta_k}}$ satisfies the same equation with $\varepsilon_k$ replaced by $\eta_k$.
Proposition \ref{propFTP} ensures that the left hand side of \eqref{probmart}
 converges weakly in $L^2(\tilde \Omega)$ to 
$$(\tilde{u}_t,h)-(\tilde{u_0},h)-\int_0^t\mathcal{L}\varphi(\tilde{u_s})ds,$$ 
where $\mathcal{L}$ is the limit generator defined in \eqref{limitgenerator}.
 We also know by \eqref{Yepsi to 0} that $Y^{\varepsilon_k}_{\tilde{\tau}^{\varepsilon_k}}$ converges in probability to 0. 
 %It remains to deal with the integral in the right-hand side of the above equation.  With this aim in mind we apply 
 On the other hand, applying \cite[Lemma 2.1]{DbGHTe11} to the sequences $(\tilde{W}^{k})_k$ and 
 $(\tilde{u}^{\varepsilon_k}_{\tilde{\tau}^{\varepsilon_k}}(\dr_x^2)^{-1}\phi)_k$,  the  integral on the right hand side of
 \eqref{probmart} converges in probability to 
 $$\int_0^t\left(i\tilde{u}(s)(\dr_x^2)^{-1}\phi d\tilde{W}_s,h\right).$$ 
 The procedure is the same for $\tilde{u}^{\eta_k}_{\tilde{\tau}^{\eta_k}}$, and finally the limits $\tilde{u}$ and $\tilde{v}$ are both solutions of $$(u_t,h)-(u_0,h)-\int_0^t\left(i\dr_x^2u_s+i\vert u_s\vert^2u_s+\frac{1}{2}u_sF, h\right)ds=\int_0^t\left(iu_s(\dr_x^2)^{-1}\phi d\tilde{W}_s,h\right),$$ with $F(x)=\sum_{k\in\EN}\left((\dr_x^2)^{-1}\phi e_k(x)\right)^2$.  We conclude thanks to uniqueness of the solution of the stochastic Schr\"odinger equation that $\tilde{u}=\tilde{v}$.

Thus,
we have proved that for all subsequences $(u^{\varepsilon_k})_k,(u^{\eta_k})_k$, the couple $(u^{\varepsilon_k},u^{\eta_k})_{k}$ 
converges in law (up to a subsequence) when $\varepsilon$ goes to 0 to a limit $(u,u)$.  
Thanks to the  Gyongy-Krylov argument (see Lemma 1.1 in \cite{GyoKry96}), the sequence $(u^\varepsilon)_\varepsilon$ 
converges in probability as $\varepsilon$ goes to 0 to a limit  $u$, which, again by the above arguments,
is a weak solution of 
\begin{equation}\label{eq4.8}
idu = (-\dr_x^2u-\vert u\vert^2u-\frac{i}{2}uF)dt-u(\dr_x^2)^{-1}\phi dW_t,
\end{equation}
where $W_t$ is the cylindrical Wiener process of \eqref{e0.1}. 
Actually, it is not difficult to prove that $u$ is a mild solution of Equation \eqref{eq4.8}. 

We would like to insist on the fact that here we used the convergence in probability to pass to the limit directly in the martingale term in \eqref{probmart},  because $\lim \tilde{W}^{k}=\tilde{W}$,  whereas in section \ref{martingale problem} we just used the martingale representation theorem to ensure the existence of some Wiener process which allowed to give an explicit expression of the martingale term. 
\end{proof}

\section{Appendix}
This appendix gathers several proofs or results, which although technical are not essential to the understanding of the article.   

\subsection{The damped linear wave semi-group} \label{salpha}

Let us consider the semigroup $(S_\alpha(t))$ associated with the linear wave equation with damping:
\begin{equation}
\label{ondeslin1ordre}
\left\{ \begin{array}{l} \dr_t n=\mu\\
\dr_t \mu +\alpha \mu =\dr_x^2 n,
\end{array} \right.
\end{equation}
so that the solution of the above equation with initial data $(n_0,n_1)$ can be written as $(n(t),\dr_tn(t))=S_\alpha(t)(n_0,n_1)$.  For any bounded measurable function $\varphi$ we define the Fourier multiplicator $\varphi((-\dr_x^2)^\frac{1}{2})= \mathcal{F}^{-1}\varphi(\vert \xi \vert )\mathcal{F}$ as an operator from $L^2(\ER)$ to $L^2(\ER)$.
We then note that $S_\alpha(t)$ may be defined as $S_\alpha(t)\phi = \mathcal{F}^{-1}(m_\alpha(t,\xi)\hat{\phi}(\xi))$ where: 
\begin{equation}\label{Salpha}
m_\alpha(t,\xi)=e^{-\frac{\alpha}{2}t}\begin{pmatrix}
m_\alpha^{1,1}(t,\xi) &m_\alpha^{1,2}(t,\xi) \\
m_\alpha^{2,1}(t,\xi) &m_\alpha^{2,2}(t,\xi)
\end{pmatrix}
\end{equation}
with 
\begin{equation}\label{SalphaBF}
\begin{split}
m_\alpha^{1,1}(t,\xi)=&\cosh\big(\frac{\sqrt{\alpha^2-4\xi^2}}{2}t\big)+\frac{\alpha}{\sqrt{\alpha^2-4\xi^2}}\sinh\big(\frac{\sqrt{\alpha^2-4\xi^2}}{2}t\big),\\
m_\alpha^{1,2}(t,\xi)=& \frac{2}{\sqrt{\alpha^2-4\xi^2}}\sinh\big(\frac{\sqrt{\alpha^2-4\xi^2}}{2}t\big),\\
m_\alpha^{2,1}(t,\xi)=&\big(\frac{\sqrt{\alpha^2-4\xi^2}}{2}-\frac{\alpha^2}{2\sqrt{\alpha^2-4\xi^2}}\big)\sinh\big(\frac{\sqrt{\alpha^2-4\xi^2}}{2}t\big), \\
m_\alpha^{2,2}(t,\xi)=&\cosh\big(\frac{\sqrt{\alpha^2-4\xi^2}}{2}t\big)-\frac{\alpha}{\sqrt{\alpha^2-4\xi^2}}\sinh\big(\frac{\sqrt{\alpha^2-4\xi^2}}{2}t\big),
\end{split}
\end{equation}
for $\vert \xi\vert < \frac{\alpha}{2}$, and 
\begin{equation}\label{SalphaHF}
\begin{split}
m_\alpha^{1,1}(t,\xi)=&\cos\big(\frac{\sqrt{4\xi^2-\alpha^2}}{2}t\big)+\frac{\alpha}{\sqrt{4\xi^2-\alpha^2}}\sin\big(\frac{\sqrt{4\xi^2-\alpha^2}}{2}t\big),\\
m_\alpha^{1,2}(t,\xi)=& \frac{2}{\sqrt{4\xi^2-\alpha^2}}\sin\big(\frac{\sqrt{4\xi^2-\alpha^2}}{2}t\big),\\
m_\alpha^{2,1}(t,\xi)=&\big(\frac{\sqrt{4\xi^2-\alpha^2}}{2}-\frac{\alpha^2}{2\sqrt{4\xi^2-\alpha^2}}\big)\sin\big(\frac{\sqrt{4\xi^2-\alpha^2}}{2}t\big) \\
m_\alpha^{2,2}(t,\xi)=&\cos\big(\frac{\sqrt{4\xi^2-\alpha^2}}{2}t\big)-\frac{\alpha}{\sqrt{4\xi^2-\alpha^2}}\sin\big(\frac{\sqrt{4\xi^2-\alpha^2}}{2}t\big)
\end{split}
\end{equation}
for $\vert \xi\vert > \frac{\alpha}{2}$.

Now, if $(n(t),\dr_tn(t))=S_\alpha(t)(n_0,n_1)$, with $n_0\in H^1(\ER)$ and $n_1\in L^2\cap \dot{H}^{-1}(\ER)$, then it is not difficult to see that
$$ \frac{d}{dt} (\| n (t)\|^2_{H^1} + \| \mu(t) \|^2_{L^2} + \| \dr_x^{-1} \mu(t) \|^2_{L^2}) \le 0,$$
showing that $S_\alpha$ is a contraction semi-group in $H^1(\ER) \times L^2\cap \dot{H}^{-1}(\ER)$.

Next, we state a  lemma which provides important estimates, useful in the proof of Proposition \ref{coroborneLinf}.  
This lemma requires in particular the use of homogeneous Sobolev spaces, due of the integration in time in the low frequency domain.

\begin{lemma}\label{inegalites generales}
For $k,\ell\in\EN$,  there exists a constant $C>0$ (depending on $\alpha$) such that  
\begin{equation}
\int_0^\infty \dbn{\left(S_\alpha(t)(n,m)\right)_1}_{H^k\cap\dot{H}^{-\ell}}^2dt \leqslant C\big(\dbn{n}_{H^k\cap \dot{H}^{-(\ell+1)}}^2+\dbn{m}_{H^{k-1}\cap \dot{H}^{-(\ell+1)}}^2\big),
\end{equation}
\begin{equation}
\int_0^\infty \dbn{\left(S_\alpha(t)(n,m)\right)_2}_{H^k\cap\dot{H}^{-\ell}}^2dt \leqslant C\big(\dbn{n}_{H^{k+1}\cap \dot{H}^{-(\ell+1)}}^2+\dbn{m}_{H^k\cap \dot{H}^{-(\ell+1)}}^2\big),
\end{equation} 
and for $i=1,2$, and $k\in \EN$,
\begin{equation}
\int_0^\infty\!\int_t^\infty \dbn{\left(S_\alpha(s)(0,\phi e_k)\right)_i}_{H^k\cap\dot{H}^{-\ell}}^2dsdt \leqslant C\dbn{\phi e_k}_{H^k\cap \dot{H}^{-(\ell+2)}}^2,
\end{equation}
where $\left(S_\alpha(t)(n,m)\right)_i$ denotes the i-th component of $S_\alpha(t)(n,m)$.

\end{lemma}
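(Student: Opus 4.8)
The plan is to pass to Fourier variables and reduce all three inequalities to pointwise-in-$\xi$ bounds on the time integrals of the scalar multiplier entries $m_\alpha^{i,j}(t,\xi)$ from \eqref{Salpha}. By Plancherel, writing $\widehat{(S_\alpha(t)(n,m))_i}(\xi)=m_\alpha^{i,1}(t,\xi)\hat n(\xi)+m_\alpha^{i,2}(t,\xi)\hat m(\xi)$ and using $|a+b|^2\leqslant 2|a|^2+2|b|^2$, each left-hand side is dominated by a sum over $j$ of integrals of $\int_0^\infty|m_\alpha^{i,j}(t,\xi)|^2\,dt$ against the weight $\big((1+\xi^2)^k+|\xi|^{-2\ell}\big)$ associated with the $H^k\cap\dot H^{-\ell}$-norm. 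Thus it suffices to prove bounds $\int_0^\infty|m_\alpha^{i,j}(t,\xi)|^2\,dt\leqslant C\,w_{i,j}(\xi)$ with explicit power weights $w_{i,j}$, and to check that $\big((1+\xi^2)^k+|\xi|^{-2\ell}\big)\,w_{i,j}(\xi)$ is controlled by the weight of the corresponding norm on the right-hand side.

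First I would split the frequency axis into the low-frequency region $\{|\xi|<\alpha/2\}$, governed by \eqref{SalphaBF}, and the high-frequency region $\{|\xi|>\alpha/2\}$, governed by \eqref{SalphaHF}. In the low-frequency region, with $\lambda=\tfrac12\sqrt{\alpha^2-4\xi^2}$, the functions $e^{-\frac{\alpha}{2}t}\cosh(\lambda t)$ and $e^{-\frac{\alpha}{2}t}\sinh(\lambda t)$ are combinations of $e^{(\lambda-\frac\alpha2)t}$ and $e^{-(\lambda+\frac\alpha2)t}$. The decisive point is that $\lambda-\tfrac\alpha2=\tfrac12(\sqrt{\alpha^2-4\xi^2}-\alpha)\sim-\xi^2/\alpha$ as $\xi\to0$, so the slowly decaying exponential yields $\int_0^\infty e^{2(\lambda-\frac\alpha2)t}\,dt\sim|\xi|^{-2}$. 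Tracking the algebraic prefactors in \eqref{SalphaBF} (of size $O(1)$ for $m^{1,1}_\alpha,m^{1,2}_\alpha$, of size $O(\xi^2)$ for $m^{2,1}_\alpha$, and using the cancellation $\cosh-\tfrac{\alpha}{2\lambda}\sinh\to e^{-\lambda t}$ which forces $m^{2,2}_\alpha$ to decay at rate $\sim\alpha$ near $\xi=0$) gives $\int_0^\infty|m^{1,j}_\alpha|^2\,dt\sim|\xi|^{-2}$ and $\int_0^\infty|m^{2,j}_\alpha|^2\,dt=O(1)$ in this region. Since $(1+\xi^2)^k\sim1$ while $|\xi|^{-2\ell}$ carries the low-frequency weight, these reproduce exactly the shift to $\dot H^{-(\ell+1)}$ on the right-hand sides of the first two inequalities.

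In the high-frequency region I would use the uniform decay $e^{-\frac\alpha2 t}$ together with $\int_0^\infty e^{-\alpha t}\{\cos^2,\sin^2\}(\omega t)\,dt\leqslant 1/\alpha$, where $\omega=\tfrac12\sqrt{4\xi^2-\alpha^2}\sim|\xi|$. The prefactors in \eqref{SalphaHF} give $\int_0^\infty|m^{1,1}_\alpha|^2\,dt,\int_0^\infty|m^{2,2}_\alpha|^2\,dt=O(1)$, $\int_0^\infty|m^{1,2}_\alpha|^2\,dt\sim|\xi|^{-2}$, and $\int_0^\infty|m^{2,1}_\alpha|^2\,dt\sim|\xi|^2$ (from the factor $\tfrac{2\xi^2-\alpha^2}{2\omega}\sim|\xi|$); weighted by $(1+\xi^2)^k$ these match the indices $H^k$ on $n$, $H^k$ on $m$, $H^{k-1}$ on $m$, and $H^{k+1}$ on $n$, respectively, while the homogeneous weight $|\xi|^{-2\ell}$ is bounded and harmless here. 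For the third inequality I would first apply Fubini, $\int_0^\infty\!\int_t^\infty(\cdots)\,ds\,dt=\int_0^\infty s\,(\cdots)\,ds$, so that an extra factor $s$ appears; only $m^{1,2}_\alpha$ and $m^{2,2}_\alpha$ enter since the input is $(0,\phi e_k)$. Repeating the low-frequency analysis with this weight replaces $\int_0^\infty e^{2(\lambda-\frac\alpha2)t}\,dt\sim|\xi|^{-2}$ by $\int_0^\infty t\,e^{2(\lambda-\frac\alpha2)t}\,dt\sim|\xi|^{-4}$, producing the gain of two homogeneous derivatives and hence the index $\dot H^{-(\ell+2)}$, whereas in the high-frequency region the factor $s$ is absorbed by $e^{-\alpha s/2}$ and leaves the power of $|\xi|$ (and thus the index $H^k$) unchanged.

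The main obstacle I anticipate is not any individual estimate but the bookkeeping near the transition frequency $|\xi|=\alpha/2$, where $\lambda$ and $\omega$ vanish and the expressions in \eqref{SalphaBF}--\eqref{SalphaHF} are only \emph{apparently} singular: one must verify that $\tfrac{\sinh(\lambda t)}{\lambda},\tfrac{\sin(\omega t)}{\omega}\to t$ and the analogous combinations stay square-integrable against $e^{-\alpha t}$ uniformly in a neighborhood of $|\xi|=\alpha/2$, so that the bounds obtained in the two regimes glue into a single constant $C=C(\alpha)$. Once this uniformity is secured, collecting the pointwise-in-$\xi$ bounds and integrating against the norm weights yields all three inequalities.
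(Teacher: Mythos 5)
Your proposal is correct and takes essentially the same route as the paper's proof: Plancherel reduction to pointwise-in-$\xi$ bounds on $\int_0^\infty\vert m_\alpha^{i,j}(t,\xi)\vert^2\,dt$, a low/high frequency split with explicit time integration of the slowly decaying exponential (with rate $\frac{\alpha}{2}-\frac12\sqrt{\alpha^2-4\xi^2}\sim\xi^2/\alpha$) producing the $\vert\xi\vert^{-2}$, and after Fubini $\vert\xi\vert^{-4}$, low-frequency weights, and the elementary bounds $\sinh(x)/x\leqslant\cosh(x)$, $\sin(x)/x\leqslant 1$ to resolve the apparent singularity at $\vert\xi\vert=\alpha/2$ — exactly the decomposition the paper carries out, merely split there into four annuli instead of your two regions plus a transition check. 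Your Fubini step yielding the extra factor $s$ and hence the $\dot{H}^{-(\ell+2)}$ index in the third estimate matches the paper's remark that one must ``integrate in time twice''.
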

\begin{proof}
These  estimates are proved using similar tricks, so we choose to prove only the first one in details,  and we focus on the $H^k$ norm since 
the procedure for the $\dot{H}^{-\ell}$ norm is similar.  Thanks to the explicit form of $S_\alpha$, given in \eqref{SalphaBF} and \eqref{SalphaHF}, 
we can bound the expression $\int_0^\infty \dbn{\left(S_\alpha(t)(n,m)\right)_1}_{H^k}^2dt$ by:
\begin{align*}
C\int_0^\infty &\int_{\vert \xi\vert\leqslant \frac{\alpha}{2}}(1+\vert\xi\vert^2)^k e^{-\alpha t}\Big[ \frac{\alpha^2}{\alpha^2-4\xi^2}\sinh^2\big(\frac{\sqrt{\alpha^2-4\xi^2}}{2}t\big)\hat{n}^2(\xi) \\
&+\cosh^2\big( \frac{\sqrt{\alpha^2-4\xi^2}}{2}t\big)\hat{n}(\xi)^2  + \frac{4}{\alpha^2-4\xi^2}\sinh^2\big(\frac{\sqrt{\alpha^2-4\xi^2}}{2}t\big)\hat{m}^2(\xi)\Big]d\xi dt \\
+ \, C & \int_0^\infty \int_{\vert \xi\vert\geqslant \frac{\alpha}{2}}(1+\vert\xi\vert^2)^ke^{-\alpha t}\Big[ \frac{\alpha^2}{4\xi^2-\alpha^2}\sin^2\big(\frac{\sqrt{4\xi^2-\alpha^2}}{2}t\big)\hat{n}^2(\xi) \\
&+\cos^2\big(\frac{\sqrt{4\xi^2-\alpha^2}}{2}t\big)\hat{n}^2(\xi)   + \frac{4}{4\xi^2-\alpha^2}\sin^2\big(\frac{\sqrt{4\xi^2-\alpha^2}}{2}t\big)\hat{m}^2(\xi)\Big]d\xi dt.
\end{align*}
Now we decompose the frequency space as follows:

\vskip 0.1 in
\noindent $\bullet$ $\vert \xi\vert \leqslant\frac{\alpha}{4}$:
Here we integrate in time and get the bound: 
\begin{align*}
C\int_{\vert \xi\vert \leqslant\frac{\alpha}{4}}(1+\vert\xi\vert^2)^k\Big(\frac{1}{\alpha-\sqrt{\alpha^2-4\xi^2}}&+\frac{1}{\alpha+\sqrt{\alpha^2-4\xi^2}}\Big)
\\
&\times
\Big(\big(1+\frac{\alpha^2}{\alpha^2-4\xi^2}\big)\hat{n}^2+\frac{4}{\alpha^2-4\xi^2}\hat{m}^2\Big)d\xi  \\
= \frac12 C \alpha \int_{\vert \xi\vert \leqslant \frac{\alpha}{4}}\vert  \xi\vert^{-2} (1+\vert\xi\vert^2)^k\Big(\big( &1+\frac{\alpha^2}{\alpha^2-4\xi^2}\big)\hat{n}^2+\frac{4}{\alpha^2-4\xi^2}\hat{m}^2\Big)d\xi;
\end{align*}
since on this domain $(1+\vert \xi\vert^2)^k$ and $\frac{\alpha^2}{\alpha^2-4\xi^2}$ are bounded, we can bound this expression by $C(\dbn{n}^2_{\dot{H}^{-1}}+\dbn{m}^2_{\dot{H}^{-1}})$. Note that the $\dot{H}^{-1}$ regularity is needed due of the integration in time in this low frequency domain.  

\vskip 0.1 in
\noindent $\bullet$ $\frac{\alpha}{4}\leqslant \vert \xi\vert \leqslant\frac{\alpha}{2}$:
Here we use the inequality $\frac{1}{x}\sinh(x) \leqslant \cosh(x)$; 
since $\alpha-\sqrt{\alpha^2-4\xi^2} \geqslant (1-\frac{\sqrt{3}}{2}) \alpha$ in this domain, we deduce that $t^2 e^{-(\alpha-\sqrt{\alpha^2-4\xi^2})t}$
is uniformly integrable in time, so that we may bound here the double integral by:
$$C(\dbn{n}_{L^2}^2+\dbn{m}_{L^2}^2),$$
 due to the upper and lower bounds on $\xi$.

\vskip 0.1 in
\noindent $\bullet$
$\frac{\alpha}{2}\leqslant \vert \xi \vert \leqslant\alpha$:
We may here bound the double integral as above, using the inequality $\frac{\sin(x)}{x}\leqslant 1$. 
\vskip 0.1 in
\noindent
$\bullet$ $\vert\xi\vert\geqslant \alpha$: 
Here we integrate in time and bound the double integral by
\begin{align*}
\frac{C}{\alpha} & \int_{\vert\xi\vert\geqslant\alpha}\frac{1}{\alpha}(1+\vert\xi\vert^2)^k\Big(\big(1+\frac{\alpha^2}{4\xi^2-\alpha^2}\big)\hat{n}^2+\frac{4}{4\xi^2-\alpha^2}\hat{m}^2\Big)d\xi \\
& \leqslant C(\dbn{n}_{H^k}^2+\dbn{m}_{H^{k-1}}^2),
\end{align*}
since on this domain $\frac{1}{4\xi^2-\alpha^2}$ is bounded.

Gathering all these estimates, we obtain:
\begin{align*}
\int_0^\infty \dbn{\left(S_\alpha(t)(n,m)\right)_1}_{H^k}^2dt \leqslant  C\big(\dbn{n}_{H^k\cap\dot{H}^{-1}}^2+\dbn{m}_{H^{k-1}\cap\dot{H}^{-1}}^2\big)
\end{align*}
so that the first estimate is proved. The computations are the same for the other estimates, except that for the last one we have to integrate in time twice so that a $\dot{H}^{-2}$ regularity is needed.
\end{proof}

\subsection{Proof of Proposition \ref{borneespK}}

We start with some estimates on the martingale term $X_t+Y_t$ introduced in equations \eqref{X_t} and \eqref{Y_t}.

\begin{lemma}\label{varquadXY}
For $t\leqslant \tau^\varepsilon_\delta$, where $\tau_\delta^\varepsilon$ is defined in equation \eqref{deftaudelta}, the martingales $X_t$ and $Y_t$ defined in \eqref{X_t} and \eqref{Y_t} satisfy: 
\begin{equation}
d\langle X,X\rangle_t \leqslant C_\phi K dt,  \qquad d\langle Y,Y\rangle_t\leqslant C_\phi(1+K)dt ,   \qquad d\langle X,Y\rangle_t\leqslant C_\phi(1+K)dt,
\end{equation}
where $K$ is defined in \eqref{K}.
The constant $C_\phi$ depends on the mass $\dbn{u}_{L^2}^2$ and the Hilbert-Schmidt norm of $\phi$ in the space $H^3\cap\dot{H}^{-4}$.
\end{lemma}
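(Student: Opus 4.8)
The plan is to treat both $X_t$ and $Y_t$ as It\^o integrals against the family $(\beta_k)_{k\in\EN}$ and to read off their quadratic variations from It\^o's isometry as sums over $k$ of squared integrands. Throughout I would lean on the one–dimensional workhorses: the embedding $H^1\hookrightarrow L^\infty$, the fact that $H^1(\ER)$ is an algebra, the conservation of $\dbn{u}_{L^2}$, the stopping-time bound \eqref{deftaudelta}, and the integral estimates on $S_\alpha$ from Lemma \ref{inegalites generales}.

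For $X_t$ I would rewrite \eqref{X_t} as $X_t=\sum_k\int_0^t c_k(s)\,d\beta_k(s)$ with $c_k=2\,Re\int_\ER iu\dr_x\bar u\,\dr_x^{-1}(\phi e_k)\,dx$, so that $d\langle X,X\rangle_t=\sum_k c_k^2\,dt$. Since $\dr_x^{-1}\phi e_k$ is real valued, H\"older gives $|c_k|\le 2\dbn{\dr_x^{-1}\phi e_k}_{L^\infty}\dbn{u}_{L^2}\dbn{\dr_xu}_{L^2}$, and summing the squares together with $H^1\hookrightarrow L^\infty$ yields $\sum_k c_k^2\le C\dbn{u}_{L^2}^2\dbn{\dr_xu}_{L^2}^2\sum_k\dbn{\dr_x^{-1}\phi e_k}_{H^1}^2$. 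The last sum equals $\dbn{\phi}_{\mathcal{L}_2(L^2,L^2\cap\dot H^{-1})}^2\le C_\phi$, while $\dbn{\dr_xu}_{L^2}^2\le K$ and $\dbn{u}_{L^2}$ is conserved; this produces $d\langle X,X\rangle_t\le C_\phi K\,dt$.

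For $Y_t$ I would similarly write \eqref{Y_t} as $Y_t=\sqrt{\vep}\sum_k\int_0^t\Phi_k(s)\,d\beta_k(s)$, with $\Phi_k(s)=\int_\ER|u|^2\int_0^\infty(a_k b+c\,d_k)\,dt'\,dx$, where $a_k(t')=\dr_x\big(S_\alpha(t')(0,\phi e_k)\big)_1$ and $d_k(t')=\dr_x^{-1}\big((S_\alpha(t')(0,\phi e_k))_2+\alpha(S_\alpha(t')(0,\phi e_k))_1\big)$ carry the noise direction, while $b(t')=\dr_x^{-1}\big((S_\alpha(t')(z^\vep,\zeta^\vep))_2+\alpha(S_\alpha(t')(z^\vep,\zeta^\vep))_1\big)$ and $c(t')=\dr_x\big(S_\alpha(t')(z^\vep,\zeta^\vep)\big)_1$. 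Estimating $|\Phi_k|\le\dbn{u}_{L^2}^2\dbn{\int_0^\infty(a_kb+c\,d_k)\,dt'}_{L^\infty}$, applying $H^1\hookrightarrow L^\infty$ and the algebra property to each product, and using Cauchy--Schwarz in $t'$, I would obtain
$$\sum_k\Phi_k^2\le C\dbn{u}_{L^2}^4\Big[\big(\!\int_0^\infty\!\dbn{b}_{H^1}^2dt'\big)\sum_k\!\int_0^\infty\!\dbn{a_k}_{H^1}^2dt'+\big(\!\int_0^\infty\!\dbn{c}_{H^1}^2dt'\big)\sum_k\!\int_0^\infty\!\dbn{d_k}_{H^1}^2dt'\Big].$$
Here the $k$-sums are controlled by $\dbn{\phi}_{\mathcal{L}_2(L^2,H^3\cap\dot H^{-4})}^2\le C_\phi$ through Lemma \ref{inegalites generales}, while the two remaining time integrals are bounded, again by Lemma \ref{inegalites generales}, by $C(\dbn{z^\vep}^2+\dbn{\zeta^\vep}^2)$ in the relevant norms, hence by $C\vep^{-2\delta}$ for $t\le\tau_\delta^\vep$ thanks to \eqref{deftaudelta}. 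Multiplying by the prefactor $\vep$ and using $\delta\le\frac18$ together with conservation of mass gives $d\langle Y,Y\rangle_t=\vep\sum_k\Phi_k^2\,dt\le C_\phi\vep^{1-2\delta}\,dt\le C_\phi(1+K)\,dt$.

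Finally, the cross term follows from Kunita--Watanabe: $|d\langle X,Y\rangle_t|\le(d\langle X,X\rangle_t)^{1/2}(d\langle Y,Y\rangle_t)^{1/2}\le C_\phi\sqrt{K(1+K)}\le C_\phi(1+K)\,dt$, using $\sqrt{K(1+K)}\le 1+K$. I expect the main obstacle to be the $Y_t$ estimate: one must correctly identify which factor inside the $t'$-integral carries the It\^o differential $\phi\,dW_s$, and then disentangle, via the algebra estimate plus Cauchy--Schwarz in $t'$, the $\phi$-dependence (summable to $C_\phi$) from the $(z^\vep,\zeta^\vep)$-dependence (controlled by $\tau_\delta^\vep$), checking that the regularity $H^3\cap\dot H^{-4}$ on $\phi$ is exactly what the double time integration in Lemma \ref{inegalites generales} demands.
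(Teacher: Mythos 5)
Your proof is correct and follows essentially the same route as the paper: the quadratic variations are read off via It\^o isometry as sums over $k$, the $X$-term is bounded through $H^1\hookrightarrow L^\infty$ and the Hilbert--Schmidt summation, and the $Y$-term through the time-integral estimates of Lemma \ref{inegalites generales} combined with the stopping time \eqref{deftaudelta}. The only cosmetic deviations are that the paper pairs $\vert u\vert^2$ in $L^2$ (hence $\dbn{u}_{L^4}^4$ plus Gagliardo--Nirenberg, giving a bound of order $\vep^{1-4\delta}(1+K)$) where you pair it in $L^1$ against an $L^\infty$ bound on the kernel (giving the slightly cleaner $C_\phi\,\vep^{1-2\delta}$, using only the conserved mass), and that you dispatch the cross variation by Kunita--Watanabe where the paper simply repeats the same estimates.
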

\begin{proof}
From \eqref{X_t}, we get for $t\leqslant \tau_\delta^\varepsilon$:
\begin{align*}
d\langle X,X\rangle_t = & \sum_{k\in\EN}\Big( Im \int_\ER u\dr_x\bar{u}\dr_x^{-1}(\phi e_k)dx\Big)^2 dt\\
&\leqslant 4 K\dbn{u}_{L^2} \sum_{k\in\EN}\dbn{\dr_x^{-1}(\phi e_k)}_{L^\infty}^2 dt 
\leqslant C_\phi Kdt.
\end{align*}
The quadratic variation of $Y_t$ is estimated thanks to Lemma \ref{inegalites generales}:
\begin{align*}
d\langle Y,Y\rangle_t \leqslant \varepsilon C_\phi \dbn{u}_{L^4}^4 (1+\dbn{z^\varepsilon}_{H^2\cap \dot{H}^{-2}}^4+\dbn{\zeta^\varepsilon}_{H^2\cap \dot{H}^{-2}}^4)dt,
\end{align*}
so that Gagliardo-Nirenberg inequality and Proposition \ref{propprocstationnaire} yield for $\varepsilon\leqslant 1$ and $\delta$ small enough: 
$$d\langle Y,Y\rangle_t\leqslant C_\phi(1+K)dt, \; a.s.,$$
for $t\leqslant \tau^\vep_\delta$. The arguments are the same for the last quadratic variation.
\end{proof}

\begin{proof}[Proof of Proposition \ref{borneespK}]
Let us first prove that there exists $C(T)>0$ and a stopping time $\tau^\varepsilon$ such that $\mathbb P(\tau^\varepsilon \le T)$ converges to $0$ as $\varepsilon$ goes to 0 such that $$\mathbb{E}\Big[\sup_{t\leqslant\tau^\varepsilon\wedge T}(H_t^\varepsilon)^2\Big]\leqslant C(T).$$ 
We recall that, thanks to  Proposition \ref{boundpolynome},
$$\mathcal{L}^\varepsilon(H_t^\varepsilon)\leqslant\varepsilon K_t^2+BK_t+C, \; a.s.,$$ 
for all $t\leqslant \tau^\vep_\delta$, where $\delta\leqslant \frac18$ is fixed, $\tau^\vep_\delta$ is defined in \eqref{deftaudelta}, and $K$ is defined in 
\eqref{K}. Moreover, we deduce from the proof of Proposition \ref{boundpolynome} that
 $$dH^\varepsilon_t\leqslant (\varepsilon K^2+BK+C) dt +dN_t, \; a.s. \text{ for } t\leqslant \tau^\vep_\delta,$$ 
 with $N_t=X_t+Y_t$, where $X_t$ and $Y_t$ are the martingales introduced respectively in \eqref{X_t} and \eqref{Y_t}.  
In order to estimate $(H^\varepsilon)^2$, we use Proposition \ref{bornesHK} to get with the It\^o formula:
\begin{align*}
d(H_t^\varepsilon)^2=&2H_t^\varepsilon dH_t^\varepsilon +H_t^\varepsilon d\langle H^\varepsilon,H^\varepsilon\rangle_t \\
\leqslant& 2H_t^\varepsilon\left(4\varepsilon (H_t^\varepsilon)^2+BH_t^\varepsilon+C\right)dt +2H_t^\varepsilon dN_t + H_t^\varepsilon d\langle N,N\rangle_t,
\end{align*}
with possibly different constants $B$ and $C$.
Lemma \ref{varquadXY} and Young inequality then give 
$$d(H_t^\varepsilon)^2 \leqslant\left( \varepsilon(H_t^\varepsilon)^4+B(H_t^\varepsilon)^2+C\right)dt +2H_t^\varepsilon dN_t, \; a.s.$$
for $t\leqslant \tau^\vep_\delta$, with constants $B$ and $C$ still independent of $\varepsilon$. Let us introduce the stopping time 
\begin{equation}
\tau^\varepsilon=\inf\left\{t\in[0,T]; \varepsilon(H_t^\varepsilon)^2\geqslant1\right\}\wedge \tau^\varepsilon_\delta.
\end{equation}
For $t\leqslant\tau^\varepsilon$ we get 
\begin{align*}
d(H_t^\varepsilon)^2\leqslant \left((1+B)(H_t^\varepsilon)^2+C\right)dt +2H_t^\varepsilon dN_t,
\end{align*} 
from which we deduce, thanks to
Gr\"onwall Lemma, the estimate:
\begin{equation}\label{estimationH_epsilon}
{\mathbb{E}[\mathbf{1}_{[0,\tau^\vep)}(H_t^\varepsilon)^2]\leqslant \big(C(T)+\mathbb{E}[H_0^2]\big)e^{(1+B)t}}.
\end{equation}
The supremum over $[0,\tau^\varepsilon]$ is estimated thanks to a martingale inequality (see e.g.  \cite[Theorem 3.14]{DPZ14}):
\begin{align*}
\mathbb{E}\big[\sup_{t\leqslant\tau^\varepsilon}(H_t^\varepsilon)^2\big]
\leqslant  \mathbb{E}[(H_0^\vep)^2] &+C(T)+  (1+B)\mathbb{E}\Big[\int_0^{T\wedge\tau^\varepsilon}(H_s^\varepsilon)^2ds\Big]\\
 &+\mathbb{E}\Big[\big(\int_0^{T\wedge\tau^\varepsilon}(H_s^\varepsilon)^2d\langle N,N\rangle_s\big)^\frac{1}{2}\Big].
\end{align*}
Lemma \ref{varquadXY} and Proposition \ref{bornesHK} give
\begin{align*}
\mathbb{E}\Big[\big(\int_0^{T\wedge\tau^\varepsilon}(H_s^\varepsilon)^2d\langle N,N\rangle_s\big)^\frac{1}{2}\Big]
\leqslant& C_\phi \mathbb{E} \Big[ \big( \int_0^{T\wedge \tau^\vep} (H^\vep_s)^2 (1+K) ds\big)^{\frac12}\Big]\\
 \leqslant &C_\phi \mathbb{E}\Big[ \big( 1+ \sup_{t\leqslant\tau^\varepsilon}(H_t^\varepsilon)^2\big)^\frac{1}{2}
 \big(\int_0^{T\wedge\tau^\varepsilon}(H_s^\varepsilon)^2ds\big)^\frac{1}{2}\Big],
\end{align*}
so that finally 
\begin{align*}
\frac{1}{2}\mathbb{E}\Big[\sup_{t\leqslant\tau^\varepsilon}(H_t^\varepsilon)^2\Big]\leqslant&\mathbb{E}[(H_0^\vep)^2]+C(T, \phi)+ C_\phi \mathbb{E}
\Big[\int_0^{T\wedge\tau^\varepsilon}(H_s^\varepsilon)^2ds\Big],
\end{align*}
which allows to conclude, thanks to \eqref{estimationH_epsilon} and Proposition \ref{bornesHK} again, that
\begin{equation} \label{estK}
\mathbb{E}\big[\sup_{t\leqslant \tau^\varepsilon\wedge T} K^2(t)\big] \leqslant C(T).
\end{equation}

For the second estimate, we note that for  $t\leqslant\tau^\varepsilon$,
\begin{align*}
H^\varepsilon_t+\alpha\int_0^t\dbn{\sqrt{\varepsilon}V}_{L^2}^2ds = \int_0^t (\mathcal{L}^\varepsilon(H^\varepsilon_s)+\dbn{\sqrt{\varepsilon}V}_{L^2}^2) \, ds + (X_t+Y_t) + H^\varepsilon_0,
\end{align*}
where $X_t$ and $Y_t$ are the martingales defined in equation \eqref{X_t} and \eqref{Y_t}.
Applying Proposition \ref{boundpolynome}, then Proposition \ref{bornesHK}, we deduce:
\begin{align*}
\frac{1}{2}K(t)+\alpha\int_0^t\dbn{\sqrt{\varepsilon}V}_{L^2}^2ds \leqslant \int_0^t (\varepsilon K^2+BK+C)ds +(X_t+Y_t) +C,
\end{align*} 
so that the second estimate of Proposition \ref{borneespK} simply follows from \eqref{estK} and the martingale inequality of
\cite[Theorem 3.14]{DPZ14}.

It remains to prove the convergence of $\tau^\varepsilon$ as $\varepsilon$ 
goes to $0$, but this is a simple consequence of Markov inequality, since
\begin{align*}
\mathbb{P}(\tau^\varepsilon<T) \leqslant \mathbb{P}\big(\sup_{t\leqslant\tau^\varepsilon}(H_t^\varepsilon)^2\geqslant\frac{1}{\varepsilon}\big) 
\leqslant \varepsilon C(T).
\end{align*}
This concludes the proof of Proposition \ref{borneespK}.
\end{proof}

\subsection{Proof of Lemma \ref{k(x,y)}}

We recall that the aim here is to compute 
$$k(x,y)=\mathbb{E}_\nu\left[z(x)\MM^{-1}z(y)+z(y)\MM^{-1}z(x)\right],$$
with, thanks to Lemma \ref{inverse1}, $\MM^{-1}z=(\dr_x^2)^{-1}\zeta+\alpha(\dr_x^2)^{-1}z$.  We start by computing the term $\mathbb{E}_\nu\left[z(x)(\dr_x^2)^{-1}z(y)\right]$.  According to  equation \eqref{esp_nu},  this term is equal to $$\sum_{k\in\EN}\int_0^\infty (S_\alpha(t)(0,\phi e_k))_1(x)(\dr_x^2)^{-1} (S_\alpha(t)(0,\phi e_k))_1(y)dt$$ 
with, according to \eqref{Salpha}, \eqref{SalphaBF} and \eqref{SalphaHF},
\begin{align*}
 (S_\alpha(t)(0,\phi e_k))_1(x) =& 2e^{-\frac{\alpha}{2}t}\int_{\vert \xi\vert\leqslant\frac{\alpha}{2}}\sinh\Big(\frac{\sqrt{\alpha^2-4\xi^2}}{2}t\Big)\frac{\widehat{\phi e_k}(\xi)}{\sqrt{\alpha^2-4\xi^2}}\,e^{ix\xi}d\xi \\
 &+ 2e^{-\frac{\alpha}{2}t}\int_{\vert \xi\vert\geqslant\frac{\alpha}{2}}\sin\Big(\frac{\sqrt{4\xi^2-\alpha^2}}{2}t\Big)\frac{\widehat{\phi e_k}(\xi)}{\sqrt{4\xi^2-\alpha^2}}\,e^{ix\xi}d\xi.
\end{align*}
Thus, we deduce:
\begin{align*}
\mathbb{E}_\nu\left[z(x)(\dr_x^2)^{-1}z(y)\right]=4\sum_{k\in\EN}&\int_0^\infty e^{-\alpha t} \Big[\int_{\vert \xi\vert\leqslant\frac{\alpha}{2}}\sinh\Big(\frac{\sqrt{\alpha^2-4\xi^2}}{2}t\Big)\frac{\widehat{\phi e_k}(\xi)}{\sqrt{\alpha^2-4\xi^2}}e^{ix\xi}d\xi \\
 &\qquad\qquad + \int_{\vert \xi\vert\geqslant\frac{\alpha}{2}}\sin\Big(\frac{\sqrt{4\xi^2-\alpha^2}}{2}t\Big)\frac{\widehat{\phi e_k}(\xi)}{\sqrt{4\xi^2-\alpha^2}}e^{ix\xi}d\xi\Big] \\
 &\qquad\times \Big[\int_{\vert \eta\vert\leqslant\frac{\alpha}{2}}\sinh\Big(\frac{\sqrt{\alpha^2-4\eta^2}}{2}t\Big)\frac{\widehat{(\dr_x^2)^{-1}\phi e_k}(\eta)}{\sqrt{\alpha^2-4\eta^2}}e^{iy\eta}d\eta \\
 &\qquad\qquad+ \int_{\vert \eta\vert\geqslant\frac{\alpha}{2}}\sin\Big(\frac{\sqrt{4\eta^2-\alpha^2}}{2}t\Big)\frac{\widehat{(\dr_x^2)^{-1}\phi e_k}(\eta)}{\sqrt{4\eta^2-\alpha^2}}e^{iy\eta}d\eta\Big]dt  \\
 = I+ II &+III+IV.
\end{align*}
We focus on the computation of the low frequency product: 
\begin{align*}
I=2\sum_{k\in\EN}&\int_{\vert\xi\vert\leqslant\frac{\alpha}{2}}\int_{\vert\eta\vert\leqslant\frac{\alpha}{2}} \frac{\widehat{\phi e_k}(\xi)}{\sqrt{\alpha^2-4\xi^2}} \, \frac{\widehat{(\dr_x^2)^{-1}\phi e_k}(\eta)}{\sqrt{\alpha^2-4\eta^2}}e^{i(x\xi+y\eta)}\\
& \times \int_0^\infty e^{-\alpha t}\Big[\cosh\Big(\frac{\sqrt{\alpha^2-4\xi^2}}{2}t+\frac{\sqrt{\alpha^2-4\eta^2}}{2}t\Big)\\
&\qquad \qquad
-\cosh \Big(\frac{\sqrt{\alpha^2-4\xi^2}}{2}t-\frac{\sqrt{\alpha^2-4\eta^2}}{2}t\Big)\Big] \, dt\, d\eta \, d\xi.
\end{align*}
For fixed $k, \xi, \eta$,  the time integral is equal to 
\begin{align*}
\frac{1}{2}&\Big[\frac{1}{\alpha-\frac{1}{2}\sqrt{\alpha^2-4\xi^2}-\frac{1}{2}\sqrt{\alpha^2-4\eta^2}}+\frac{1}{\alpha+\frac{1}{2}\sqrt{\alpha^2-4\xi^2}+\frac{1}{2}\sqrt{\alpha^2-4\eta^2}} \\
&-\frac{1}{\alpha-\frac{1}{2}\sqrt{\alpha^2-4\xi^2}+\frac{1}{2}\sqrt{\alpha^2-4\eta^2}}-\frac{1}{\alpha+\frac{1}{2}\sqrt{\alpha^2-4\xi^2}-\frac{1}{2}\sqrt{\alpha^2-4\eta^2}}\Big] \\[0.25cm]
&=\frac{\alpha\sqrt{\alpha^2-4\xi^2}\sqrt{\alpha^2-4\eta^2}}{(\xi^2-\eta^2)^2+2\alpha^2(\xi^2+\eta^2)},
\end{align*} 
so that 
\begin{align*}
I= 2\alpha\sum_{k\in\EN}\int_{\vert\xi\vert\leqslant\frac{\alpha}{2}}\int_{\vert\eta\vert\leqslant\frac{\alpha}{2}}\frac{\widehat{\phi e_k}(\xi)\widehat{(\dr_x^2)^{-1}\phi e_k}(\eta)}{(\xi^2-\eta^2)^2+2\alpha^2(\xi^2+\eta^2)}e^{i(x\xi+y\eta)}d\eta \,d\xi.
\end{align*}
Siimilar computations for the other terms lead to:
\begin{equation}
\mathbb{E}_\nu\left[z(x)(\dr_x^2)^{-1}z(y)\right]=\sum_{k\in\EN}\int_\ER\int_\ER K_1(\xi,\eta)\widehat{\phi e_k}(\xi)\widehat{\phi e_k}(\eta)e^{i(x\xi+y\eta)}d\eta \,d\xi, 
\end{equation}
with 
\begin{equation}\label{defK_1}
K_1(\xi,\eta)=-\frac{1}{\eta^2}\times\frac{2\alpha}{(\xi^2-\eta^2)^2+2\alpha^2(\xi^2+\eta^2)}.
\end{equation}
The computation of  $\mathbb{E}_\nu\left[z(x)(\dr_x^2)^{-1}\zeta(y)\right]$ is very similar so that we do not write it in details. We obtain 
\begin{equation}
\begin{split}
\mathbb{E}_\nu\left[z(x)(\dr_x^2)^{-1}\zeta(y)\right]=&-\frac{\alpha}{2}\mathbb{E}_\nu\left[z(x)(\dr_x^2)^{-1}z(y)\right] \\&+ \sum_{k\in\EN}\int_\ER\int_\ER K_2(\xi,\eta)\widehat{\phi e_k}(\xi)\widehat{\phi e_k}(\eta)e^{i(x\xi+y\eta)}d\eta \, d\xi, 
\end{split}
\end{equation}
with 
\begin{equation}\label{defK_2}
K_2(\xi,\eta)=-\frac{1}{\eta^2}\times\frac{\alpha^2+\xi^2-\eta^2}{(\xi^2-\eta^2)^2+2\alpha^2(\xi^2+\eta^2)}.
\end{equation}
Finally, defining $K(\xi,\eta)=\frac{\alpha}{2}K_1(\xi,\eta)+K_2(\xi,\eta)$    we obtain 
\begin{equation}
\begin{split}
\mathbb{E}_\nu [z(x)&\MM^{-1}z(y)+z(y)\MM^{-1}z(x)]\\
& =\sum_{k\in\EN}\int_\ER\int_\ER (K(\xi,\eta)+K(\eta,\xi))\widehat{\phi e_k}(\xi)\widehat{\phi e_k}(\eta)e^{i(x\xi+y\eta)}d\eta \, d\xi \\
&=-\sum_{k\in\EN}\int_\ER\int_\ER\frac{1}{\xi^2\eta^2}\widehat{\phi e_k}(\xi)\widehat{\phi e_k}(\eta)e^{i(x\xi+y\eta)}d\eta \, d\xi
\end{split}
\end{equation}
which gives the result of Lemma \ref{k(x,y)}.

\bibliographystyle{plain}
\bibliography{biblio1}
%\nocite{*}

\end{document}